\renewcommand{\Re}{\operatorname{Re}}
\renewcommand{\Im}{\operatorname{Im}}
\newcommand{\sech}{\operatorname{sech}}
\newcommand{\R}{\mathbb R}
\newtheorem{theorem}{Theorem}%[section]
\newtheorem{proposition}{Proposition}[section]
\newtheorem{lemma}[proposition]{Lemma}
\newtheorem{conjecture}{Conjecture}
\theoremstyle{remark}
\newtheorem{remark}[proposition]{Remark}
\numberwithin{equation}{section}
\title[Critical and supercritical SNLS]{%Numerical study of 
Behavior of solutions to the 1D focusing stochastic %NLS 
$L^2$-critical and supercritical
nonlinear Schr\"odinger equation with space-time white noise}
\author[A. Millet]{Annie Millet}
\address{SAMM (EA 4543), Universit\'e Paris 1, 90 Rue de Tolbiac, 75013 Paris Cedex (and LPSM)}
\curraddr{}
\email{annie.millet@univ-paris1.fr}
\author[S. Roudenko]{Svetlana Roudenko}
\address{Department of Mathematics \& Statistics\\Florida International University,  
Miami, FL 33199, USA}
\curraddr{}
\email{sroudenko@fiu.edu}
\author[K. Yang]{Kai Yang}
\address{Department of Mathematics  \& Statistics\\Florida International University,  Miami, FL 33199, USA}
\curraddr{}
\email{yangk@fiu.edu}
\subjclass[2010]{60H15,35R60, 65C30, 65M06} %35Q55, 35Q40, 65M70, 65N35} %35Q75, 37K40
\keywords{stochastic NLS, space-time white noise, additive noise, multiplicative noise, blow-up dynamics, mass-conservative numerical schemes}
\date{}%\empty} %\today}
\begin{document}
\begin{abstract}
We study the focusing stochastic nonlinear Schr\"odinger equation in 1D in the $L^2$-critical and supercritical cases with an additive or multiplicative perturbation driven by space-time white noise. Unlike the deterministic case, the Hamiltonian (or energy) is not conserved in the stochastic setting, nor is the mass (or the $L^2$-norm) conserved in the additive case. Therefore, we investigate the time evolution of these quantities. After that we study the influence of noise on the global behavior of solutions. In particular, we show that the noise may induce blow-up, thus, ceasing the global existence of the solution, which otherwise would be global in the deterministic setting. 
Furthermore, we study the effect of the noise on the blow-up dynamics in both multiplicative and additive noise settings and obtain profiles and rates of the blow-up solutions. Our findings conclude that the blow-up parameters (rate and profile) are insensitive to the type or strength of the noise: if blow-up happens, it has the same dynamics as in the deterministic setting, however, there is a (random) shift of the blow-up center, which can be described as a random variable normally distributed. 
\end{abstract}

\maketitle
\tableofcontents

\section{Introduction}
We consider the 1D focusing stochastic nonlinear Schr\"odinger (SNLS) 
equation, that is, the NLS equation subject to a random perturbation $f$ %driven by space-time white noise $W$
\begin{align}\label{E:NLS}
\begin{cases} 
iu_t+u_{xx}+|u|^{2\sigma}u= \epsilon  f(u), 
%u \circ  \dot{\chi}(x,t), 
\quad (x,t)\in    [0,\infty) \times {\mathbb R}, \\ %~\sigma >= 2,3,... \\
u(0,x)=u_0(x).
\end{cases}
\end{align}
Here, the term $f(u)$ stands for a stochastic perturbation driven by a space-time white noise $W(dt,dx)$ (described in Section \ref{S:hat}) and 
$u_0\in H^1(\mathbb R)$
is the deterministic initial condition.
In this paper we study the SNLS equation \eqref{E:NLS} with either an additive %perturbation 
or a multiplicative perturbation  driven by  space-time white noise: 
its effect on the mass ($L^2$ norm) and energy (Hamiltonian), the influence of the noise on the global behavior of solutions 
and, in particular, its effect on the blow-up dynamics. In the deterministic setting the mass and the energy are typically conserved,
 however, these quantities may behave differently under stochastic perturbations, which might significantly change global behavior of solutions. 

The focusing stochastic NLS equation appears in physical models that involve random media, inhomogeneities or noisy sources. For example, the influence of the additive noise on the soliton propagation is studied in \cite{FKLT2001}, multiplicative noise in the context of Scheibe aggregates is discussed in \cite{RGBC1995}, the NLS studies in random media (via the inverse scattering transform) are discussed in \cite{G98}, \cite{AG2005} (and references therein). Relevant analytical studies of the SNLS \eqref{E:NLS} have been done by de Bouard \& Debussche in a series of papers \cite{dBD2001}, \cite{dBD2002c}, \cite{dBD2003}, \cite{dBD2005}, and numerical investigations by Debussche \& Di Menza and collaborators, can be found in \cite{DM2002a}, \cite{DM2002b}, \cite{BDM2005}.   

We consider two cases of the stochastic perturbation $f(u)$ in \eqref{E:NLS}: 
\begin{align}\label{D:f}
f(u)=
\begin{cases} 
 u(x,t) \circ {W}(dt,dx),  \quad \mathrm{multiplicative \,\, case,} \\
{W}(dt,dx), \qquad \mathrm{additive \,\, case}.
\end{cases}
\end{align}
The notation $ u(x,t) \circ W(dt,dx)$ stands for the Stratonovich integral, which makes sense when the noise is more regular (for example, when $W$ 
is replaced by its approximation $W_N$). This integral can be related to the It\^o integral (using the Stratonovich-It\^o correction term);  
for more details we refer the reader to \cite[p.99-100]{dBD2003}. % or \cite{MR2020}. 
The reason for the Stratonovich integral is the mass conservation, which we discuss next while recalling the properties of the deterministic NLS equation. 
  
The deterministic case of \eqref{E:NLS}, corresponding to $\epsilon=0$, has been intensively studied in the last several decades. 
The local wellposedness in $H^1$ goes back to the works of Ginibre and Velo \cite{GV1979}, \cite{GV1985}; see also \cite{K1987}, \cite{T1987}, \cite{CW1988},
 and the book \cite{Ca2003} for further details. During their lifespans, solutions to the deterministic equation \eqref{E:NLS} conserve several quantities, 
 which include the mass $M(u)$ and the energy (or Hamiltonian) $H(u)$ defined as 
$$
M(u(t))=\|u(t)\|_{L^2}^2 \equiv M(u_0) \quad \mbox{\rm and } \quad H(u(t))=\frac{1}{2} \| \nabla u(t)\|_{L^2}^2 - \frac{1}{2\sigma +2} \|u(t)\|_{L^{2\sigma +2}}^{2\sigma +2}\equiv H(u_0).
$$
The deterministic equation is invariant under the scaling: if $u(t,x)$ is a solution to \eqref{E:NLS} with $\epsilon=0$, then so is 
$u_\lambda(t,x) = \lambda^{1/\sigma} \, u(\lambda^2 t, \lambda x)$. This scaling makes the Sobolev $\dot{H}^{s}$ norm of the solution invariant 
with the scaling index $s$ defined as 
\begin{equation}\label{E:s}
s= \frac12-\frac1{\sigma}. 
\end{equation}
Thus, the 1D quintic ($\sigma=2$) NLS is called the $L^2$-critical equation ($s=0$). 
The nonlinearities higher than quintic (or $\sigma >2$) make the NLS equation $L^2$-supercritical ($s>0$)\footnote{The range of the critical index in 1D is $0<s<\frac12$.}; when $\sigma<2$, the equation is $L^2$-subcritical. 

In this work we mostly study the $L^2$-critical and supercritical SNLS equation \eqref{E:NLS} with quintic or higher powers of nonlinearity. In these cases, it is known that $H^1$ solutions may not exist globally in time (and thus, blow up in finite time), which can be shown by a well-known convexity argument on a finite variance (\cite{VPT1970}, \cite{Za1972}, for a review see \cite{SS1999}). %For a more complete characterization, 
We next recall the notion of standing waves, that is, solutions to the deterministic NLS of the form $u(t,x) = e^{it} Q(x)$. Here, $Q$ is a smooth positive decaying at infinity solution to 
%the following equation 
\begin{equation}\label{E:Q}
-Q+Q^{\prime\prime}+Q^{2\sigma+1} = 0.  
\end{equation}
This solution is unique and is called the ground state (see \cite{We1983} and references therein). In 1D this solution is explicit: $Q(x) = (1+\sigma)^{\frac1{2\sigma}} \, \sech^{\frac1{\sigma}}(\sigma x)$.

In the $L^2$-critical case ($\sigma=2$) the threshold for globally existing vs. finite time existing solutions was first obtained by Weinstein \cite{We1983}, 
showing that if $M(u_0) < M(Q)$, then the solution $u(t)$ exists globally in time\footnote{and scatters to a linear solution in $L^2$, see \cite{D2015} and references therein.}; 
otherwise, if $M(u_0) \geq M(Q)$, the solution $u(t)$ may blow up in finite time. The minimal mass blow-up solutions (with mass equal to $M(Q)$) 
would be nothing else but the pseudoconformal transformations of the ground state solution $e^{it}Q$ by the result of Merle \cite{M1993}.
While these blow-up solutions are explicit, they are unstable under perturbations. 
The known stable blow-up dynamics is available for solutions with the initial mass larger than that of the ground state $Q$, and has a rich history, see \cite{YRZ2018}, \cite{YRZ2020},  \cite{SS1999}, \cite{F2015} (and references therein); the key features are  recalled later.  

In the $L^2$-supercritical case ($s>0$) the known thresholds for globally existing vs. blow-up in finite time solutions depend on the scale-invariant quantities such 
as $\mathcal{ME}(u):=M(u)^{1-s} H(u)^{s}$ and $\|u\|_{L^2}^{1-s} \|\nabla u(t)\|_{L^2}^s$, where the former is conserved in time and the latter changes 
the $L^2$-norm of the gradient. The original dichotomy was obtained in the fundamental work  by Kenig and Merle \cite{KM2006} 
in the energy-critical case ($s=1$ in dimensions 3,4,5), where they introduced the concentration compactness and rigidity approach to show the scattering behavior (i.e., approaching a linear evolution) for the globally existing solutions under the energy threshold (i.e., $E(u_0)<E(Q)$ in the energy-critical setting). It was extended to the intercritical case $0<s<1$ in \cite{HR2007}, \cite{DHR2008}, \cite{Gu2014}, followed by many other adaptations to various evolution equations and settings. A combined result for $0\leq s \leq 1$ is the following theorem (here, $X = \{ H^1 ~ \mbox{if} ~ 0<s<1;~ L^2 ~\mbox{if}~ s=0; ~\dot{H}^1 ~ \mbox{if} ~ s=1 \}$, for simplicity stated for zero momentum).
\begin{theorem}[\cite{KM2006}, \cite{HR2007}, \cite{HR2007}, \cite{DHR2008}, \cite{HR2010b},\cite{Gu2014}, \cite{FXC2011}, \cite{D2015}]\label{T:1} 
Let $u_0\in X(\R^N)$
% for $0<s<1$, $u_0 \in L^2$ for $s=0$ and $u_0 \in \dot{H}^1$.  %with $P[u_0]=0$ 
and $u(t)$ be the corresponding solution to the deterministic NLS equation \eqref{E:NLS} ($\epsilon=0$) with the maximal interval of existence $(T_*, T^*)$. Suppose that $M(u_0)^{1-s} E(u_0)^s  < M(Q)^{1-s} E(Q)^s $. 
%{\color{red} Here you use negative time. May be to be consistant with the stochastic setting, only positive time would be better?} 
\begin{itemize}
\item 
If $\|u_0\|_{L^2}^{1-s} \|\nabla u_0\|_{L^2}^s < \|Q\|_{L^2}^{1-s} \|\nabla Q\|_{L^2}^s$, then
$u(t)$ exists for all $t\in \mathbb R$ with $\|u(t)\|_{L^2}^{1-s} \|\nabla u(t)\|_{L^2}^s < \|Q\|_{L^2}^{1-s} \|\nabla Q\|_{L^2}^s$ and $u(t)$ scatters in $X$: there exist $u_{\pm}\in X$ such that $\lim\limits_{t\rightarrow\pm\infty}\|u(t)-e^{it\Delta}u_{\pm}\|_{X(\R^N)}=0$.
\item 
If $\|u_0\|_{L^2}^{1-s} \|\nabla u_0\|_{L^2}^s > \|Q\|_{L^2}^{1-s} \|\nabla Q\|_{L^2}^s$, 
then $\|u(t)\|_{L^2}^{1-s} \|\nabla u(t)\|_{L^2}^s > \|Q\|_{L^2}^{1-s} \|\nabla Q\|_{L^2}^s$
for $t\in(T_*, T^*)$. Moreover, if $|x|u_0\in L^2(\R^N)$ (finite variance) or $u_0$ is radial, then the solution blows up in finite time; if $u_0$ is of infinite variance and nonradial ($s>0$), then either the solution blows up in finite time or there exits a sequence of times $t_n\rightarrow +\infty$ (or $t_n\rightarrow -\infty$) such that $\|\nabla u(t_n)\|_{L^2(\R^N)}\rightarrow \infty$.
\end{itemize}
\end{theorem}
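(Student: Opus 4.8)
The plan is to reduce both alternatives to a single variational dichotomy built from the sharp Gagliardo--Nirenberg inequality, and then to treat the two branches by the two standard but quite different machines: concentration--compactness/rigidity for the global/scattering branch, and virial convexity for the blow-up branch. First I would record the sharp Gagliardo--Nirenberg inequality in $\R^N$,
\[
\|u\|_{L^{2\sigma+2}}^{2\sigma+2} \le C_{GN}\,\|\nabla u\|_{L^2}^{\sigma N}\,\|u\|_{L^2}^{2\sigma+2-\sigma N},
\]
whose optimal constant is attained by $Q$, together with the Pohozaev identities satisfied by $Q$ (coming from \eqref{E:Q}), which tie $C_{GN}$ to $\|Q\|_{L^2}$ and $\|\nabla Q\|_{L^2}$. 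Substituting this into $E(u)=\frac12\|\nabla u\|_{L^2}^2-\frac1{2\sigma+2}\|u\|_{L^{2\sigma+2}}^{2\sigma+2}$ and homogenizing with the mass, I would produce a universal function $g$ of the single scale-invariant quantity $\mathcal G(u)\defeq\|u\|_{L^2}^{1-s}\|\nabla u\|_{L^2}^s$ such that $M(u)^{1-s}E(u)^s\ge g(\mathcal G(u))$, where $g$ increases then decreases, with a unique interior maximum at $\mathcal G(Q)=\|Q\|_{L^2}^{1-s}\|\nabla Q\|_{L^2}^s$ and maximal value exactly $M(Q)^{1-s}E(Q)^s$.

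The second step is the energy-trapping dichotomy, which already yields the first sentence of each bullet. Since $M$ and $E$ are conserved by the deterministic flow, $M(u(t))^{1-s}E(u(t))^s\equiv M(u_0)^{1-s}E(u_0)^s<M(Q)^{1-s}E(Q)^s=\max g$ for all $t\in(T_*,T^*)$; because $t\mapsto\mathcal G(u(t))$ is continuous (by $H^1$-continuity of the flow) and $g(\mathcal G(u(t)))\le M(u_0)^{1-s}E(u_0)^s<\max g$, the trajectory $\mathcal G(u(t))$ can never equal the argmax $\mathcal G(Q)$. Hence it remains strictly on the side where it starts, and one in fact gets a uniform gap: $\mathcal G(u(t))\le(1-\delta)\,\mathcal G(Q)$ in the subcritical branch, or $\mathcal G(u(t))\ge(1+\delta)\,\mathcal G(Q)$ in the supercritical branch, for some $\delta>0$ depending only on the mass-energy deficit.

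For the scattering statement (first bullet) I would invoke the Kenig--Merle concentration--compactness/rigidity scheme. After establishing a small-data global theory and stability in the relevant Strichartz spaces, I would argue by contradiction: if scattering failed somewhere below the threshold $M(Q)^{1-s}E(Q)^s$, a profile decomposition would produce a minimal mass-energy critical element whose orbit is precompact in $X$ modulo the symmetries of the equation. The rigidity step then applies a localized virial/Morawetz identity to this almost-periodic solution; the coercivity furnished by the subcritical trapping of the previous step forces the critical element to be trivial (or to coincide with the soliton, excluded by the strict inequality), a contradiction. I expect this to be the main obstacle, since the compactness and rigidity arguments are the genuinely hard, dimension- and regularity-sensitive part, and in the endpoint cases $s=0$ and $s=1$ they require the separate treatments cited.

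Finally, for the blow-up statement (second bullet) I would use the virial identity
\[
\frac{d^2}{dt^2}\int_{\R^N}|x|^2|u(t,x)|^2\,dx=8\|\nabla u(t)\|_{L^2}^2-\frac{4\sigma N}{\sigma+1}\|u(t)\|_{L^{2\sigma+2}}^{2\sigma+2},
\]
which, after eliminating the $L^{2\sigma+2}$ term through the energy, becomes an affine combination of $\|\nabla u(t)\|_{L^2}^2$ and $E(u_0)$ whose $\|\nabla u\|_{L^2}^2$-coefficient is negative when $s>0$. In the finite-variance case, the uniform supercritical gap makes the right-hand side bounded above by $-\delta<0$, so the nonnegative variance $V(t)=\int|x|^2|u|^2\,dx$ would become negative in finite time unless $(T_*,T^*)$ is finite; hence finite-time blow-up. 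For radial data without finite variance I would replace $|x|^2$ by a smooth truncation $\varphi_R(|x|)\approx|x|^2$ for $|x|\le R$, following Ogawa--Tsutsumi, absorbing the localization errors via radial decay (Strauss) estimates, again driving the localized variance negative. The nonradial infinite-variance case yields only the weaker conclusion: if the solution were global with $\sup_t\|\nabla u(t)\|_{L^2}<\infty$, then the trapping together with a compactness argument would again produce a precompact orbit contradicting the virial lower bound, so $\|\nabla u(t_n)\|_{L^2}\to\infty$ along some sequence $t_n\to\pm\infty$. Collecting the two branches with the trapping step yields the theorem.
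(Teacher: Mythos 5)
The paper contains no proof of Theorem \ref{T:1}: it is a survey statement assembled from the cited literature (Kenig--Merle, Holmer--Roudenko, Duyckaerts--Holmer--Roudenko, Guevara, Fang--Xie--Cazenave, Dodson), so the only meaningful comparison is with the proofs in those references --- and your outline reproduces their strategy faithfully: the sharp Gagliardo--Nirenberg constant realized by $Q$ via the Pohozaev identities from \eqref{E:Q}; mass/energy conservation plus continuity of $t\mapsto\|u(t)\|_{L^2}^{1-s}\|\nabla u(t)\|_{L^2}^{s}$ giving the trapping dichotomy with a uniform $\delta$-gap; the Kenig--Merle concentration--compactness/rigidity scheme on the subthreshold branch; the virial argument on the superthreshold branch, where your sign computation is correct (the coefficient $8-4\sigma N$ of $\|\nabla u\|_{L^2}^2$ is negative precisely when $s>0$), with Ogawa--Tsutsumi localization for radial infinite-variance data; and the Holmer--Roudenko argument for the nonradial infinite-variance alternative.

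Two caveats are worth naming, since they are the only places where your sketch, read literally, would not close. First, the reduction to a single function $g$ with $M(u)^{1-s}E(u)^{s}\ge g(\mathcal G(u))$ is ill-posed when $E(u_0)\le 0$ and $s\notin\{0,1\}$: the quantity $E(u_0)^{s}$ is then undefined, and the standard convention is that the hypothesis is vacuously satisfied on the blow-up side (where $E\le 0$ makes Glassey's argument classical) and incompatible with the scattering side (Gagliardo--Nirenberg forces $\mathcal G(u)>\mathcal G(Q)$ when $E\le 0$); the trapping lemma is therefore run, as in Holmer--Roudenko, on $E(u)\,M(u)^{(1-s)/s}$ or in renormalized variables after restricting to $E>0$. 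Second, at the endpoint $s=0$ the scattering claim is Dodson's theorem, and it is \emph{not} obtained by the Kenig--Merle-plus-localized-virial rigidity you describe: the minimal element there is excluded via long-time Strichartz and interaction Morawetz estimates, a genuinely different and much harder rigidity mechanism. Your parenthetical that the endpoints ``require the separate treatments cited'' is the right acknowledgment, but the rigidity step as sketched should not be presented as covering $s=0$ (and at $s=1$ the nonradial case likewise needs machinery beyond the localized virial). Similarly, the nonradial infinite-variance alternative in \cite{HR2010b} is proved with a time-dependent spatial cutoff and control of mass flux rather than by producing a fully precompact orbit, though your contradiction scheme captures its spirit. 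As a blueprint of the proofs behind the citation list, everything else is accurate.
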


The focusing NLS equation subject to a stochastic perturbation has been studied in \cite{dBD2003} in the $L^2$-subcritical case, showing %where it t is proven that for $L^2$-subcritical nonlinearity,  there is 
a global well-posedness for any $u_0 \in H^1$. Blow-up for $0 \leq s <1$ 
%in the $L^2$-critical and inter-critical cases %(that is below the $H^1$-critical one) 
has been studied in \cite{dBD2002c} for an additive perturbation, and  \cite{dBD2005}  for a multiplicative noise. The results in \cite{dBD2005} state that in the multiplicative noise case for $s \geq 0$ initial conditions with finite (analytic) variance and sufficiently negative energy
%, in the $L^2$-critical case and above 
blow up before some finite time $t>0$ with positive probability \cite[Thm 4.1]{dBD2002c}. For both additive and multiplicative noise, in the $L^2$-supercritical case the authors prove that if noise is non-degenerate and regular enough as initial conditions, then blow-up happens with positive probability before
a given fixed time $t>0$ (see further details in \cite[Thm 1.2]{dBD2002c}, which also discusses the $L^2$-critical situation in the additive case, and \cite[Thm 5.1]{dBD2005}). This differs from the deterministic setting, where no blow-up occurs for initial data strictly smaller
than the ground state (in terms of the mass).

In \cite{MR2020} an adaptation of the above Theorem \ref{T:1} is obtained  
to understand the global behavior of solutions in the stochastic setting in the $L^2$-critical and supercritical cases. 
One major difference is that mass and energy are not necessarily conserved in the stochastic setting.
In the SNLS equation with multiplicative noise (defined via Stratonovich integral) the mass is conserved a.s. (see \cite{dBD2003}), which allows to prove global 
existence of solutions in the $L^2$-critical setting with $M(u_0) < M(Q)$ (see \cite{MR2020}). (A somewhat similar situation happens in the additive noise case, %(see  \cite{dBD2002c}),  
though mass is no longer conserved and actually grows linearly in time (see \eqref{mass_add}.) 
To understand global behavior  in the $L^2$-supercritical setting  one needs to control energy (as can be seen from Theorem \ref{T:1}). While it is possible to 
obtain some upper bounds on the energy on a (random) time interval (and in the additive noise it is also necessary to localize the mass on a random set, since it is not conserved), the exact behavior of energy is not clear.  This is exactly what we investigate in this paper via discretization of both quantities (mass and energy) in various contexts, then obtaining estimates on the discrete analogs and tracking the dependence on several parameters. 
%   and after that doing numerical simulations to see how these quantities behave. 
Once we track the growth (and leveling off in the multiplicative case) of energy
(and mass in the additive setting), we study the global behavior of solutions. In particular, we investigate the blow-up dynamics of solutions in both $L^2$-critical and 
supercritical settings and obtain the rates, profiles and other features such as a location of blow-up. Before we state these findings, we review the blow-up in the deterministic setting. 

A stable blow-up in deterministic setting exhibits a self-similar structure with a specific rate and profile. 
Thanks to the scaling invariance, the following rescaling of the (deterministic) equation is introduced via the new space and time coordinates
 $(\tau, \xi)$ and a scaling function $L(t)$ 
(for more details see \cite{LePSS1987}, \cite{SS1999}, \cite{YRZ2019})
\begin{align}\label{E:rescale}
u(t,r)=\frac{1}{L(t)^{\frac{1}{\sigma}}}\,v(\tau, \xi), \quad \mbox{where} \quad \xi=\frac{r}{L(t)},~~r=|x|, \quad \tau=\int_0^t\frac{ds}{L(s)^2}.
\end{align}
Then the equation \eqref{E:NLS} in the deterministic setting ($\epsilon=0$) becomes
\begin{align}\label{E:v}
iv_{\tau}+ia(\tau)\left(\xi v_{\xi}+\frac{v}{\sigma}\right)+\Delta v + |v|^{2\sigma}v=0
\end{align}
with
\begin{align}\label{E:a}
a(\tau)=-L\frac{dL}{dt} \equiv -\frac{d \ln L}{d\tau}.
\end{align}
The limiting behavior of $a(\tau)$ as $\tau \to \infty$ (from the second term in \eqref{E:v})
creates a significant difference in blowup behavior between the $L^2$-critical and $L^2$-supercritical cases. 
As $a(\tau)$ is related to $L(t)$ via \eqref{E:a}, the behavior of the rate, 
%{\color{blue}  also called focusing parameter}, 
$L(t)$,
 is typically studied to understand the blow-up behavior. Separating variables
 $v(\tau, \xi)=e^{i\tau}Q(\xi)$ in \eqref{E:v} and assuming that $a(\tau)$ converges to a constant $a$,  %, instead of \eqref{Q eqn-tau} in this section we study 
the following problem is studied to gain information about the blow-up profile:
\begin{align}\label{E:profile}
\begin{cases}
\Delta_{\xi} Q -Q + ia\left(\dfrac{Q}{\sigma} + \xi Q_{\xi} \right) + |Q|^{2\sigma}Q=0,\\
Q_{\xi}(0)=0,\qquad Q(0)=\mathrm{real}, \qquad Q(\infty)=0.
\end{cases}
\end{align}
Besides the conditions above, it is also required to have $|Q(\xi)|$ decrease monotonically with $\xi$, without any oscillations as $\xi \to \infty$ 
(see more on that in \cite{YRZ2019}, \cite{SS1999}, \cite{BCR1999}). In the $L^2$-critical case the above equation is simplified (due to $a$ being zero)
 to the ground state equation \eqref{E:Q}. However, even in that critical context the equation \eqref{E:profile} is still meticulously investigated
  (with nonzero $a$ but asymptotically approaching zero), since the correction in the blow-up rate $L(t)$ comes exactly from that. 
  It should be emphasized that the decay of $a(\tau)$ to zero in the critical case is extremely slow, which makes it very difficult to pin down the exact blow-up rate,
   or more precisely, the correction term in the blow-up rate, and it was quite some time until rigorous analytical proofs appeared 
   (in 1D \cite{Pe2000a}, followed by a systematic work in \cite{MR2005}-\cite{FMR2006} and references therein; see review of this in \cite[Introduction]{YRZ2019} 
   or \cite{SS1999}).  In the $L^2$-supercritical case, the convergence of $a(\tau)$ to a non-zero constant is rather fast, 
   and the rescaled solution converges to the blow-up profile fast as well. The more difficult question in this case is the profile itself, 
   since it is no longer the ground state from \eqref{E:Q}, but exactly an admissible solution (without fast oscillating decay and with an asymptotic decay of 
   $|\xi|^{-\frac1\sigma}$ as $|\xi| \to \infty$) of \eqref{E:profile}. 

Among all admissible solutions to \eqref{E:profile} there is no uniqueness as it was shown in \cite{BCR1999}, \cite{KL1995}, \cite{YRZ2019}. These solutions generate branches of so-called multi-bump profiles, that are labeled $Q_{J,K}$, indicating that the $J$th branch converges to the $J$th excited state, and $K$ is the enumeration of solutions in a branch. The solution $Q_{1,0}$, the first solution in the branch $Q_{1,K}$ (this is the branch, which converges to the $L^2$-critical ground state solution $Q$ in \eqref{E:Q} as the critical index $s \to 0$), is shown (numerically) to be the profile of stable supercritical blow-up. The second and third authors have been able to obtain the profile $Q_{1,0}$ in various NLS cases (see \cite{YRZ2019}, also an adaptation for a nonlocal Hartree-type NLS \cite{YRZ2020}), and thus, we are able to use that in this work and compare it with the stochastic case. 

%%%%%%%%%%%%%%%%%%%%%%5555

In the focusing SNLS case, in \cite{DM2002b} and \cite{DM2002a} 
numerical simulations are done when the driving noise is rough, namely, it is an approximation of space-time white noise. The effect of the additive and multiplicative noise is described for the propagation of solitary waves, in particular, it was noted that the blow-up mechanism  transfers energy from the larger scales to smaller scales, thus, allowing the mesh size affect the formation of the blow-up in the multiplicative noise case (the coarse mesh allows formation of blow-up and the finer mesh prevents it or delays it). The probability of the blow-up time is also investigated and found that in the multiplicative case it is delayed on average. In the additive noise case (where noise is acting as the constant injection of energy) the blow-up seems to be amplified and happens sooner on average, for further details refer to \cite[Section 4]{DM2002a}. Other parameters' dependence (such as on the strength $\epsilon$ of the noise) is also discussed. We note that the observed behavior of solutions as noted highly depends on the discretization and numerical scheme used.   

In this paper we design three numerical schemes to study the SNLS \eqref{E:NLS} driven by the space-time white noise. 
%, which is either an additive external force or a multiplicative noise via a random potential. 
We then use these schemes to track the time dependence of mass and energy of the stochastic Schr\"odinger flow in each multiplicative and additive noise cases. After that  we investigate the influence of the noise on the blow-up dynamics. 
In particular, we give positive confirmations to the following conjectures. 

\begin{conjecture}[$L^2$-critical case]\label{C:1}
Let $u_0 \in H^1(\mathbb R)$ and $u(t)$, $t>0$, be an evolution of the SNLS equation \eqref{E:NLS} with $\sigma=2$ and noise \eqref{D:f}. 

In the multiplicative (Stratonovich) noise case, sufficiently localized initial data with $\|u_0\|_{L^2} > \|Q\|_{L^2}$ blows up in finite positive 
(random) time with positive probability.

In the additive noise case, sufficiently localized initial data blows up in finite (random) time a.s.

If a solution blows up at a random positive time $T(\omega)>0$ for a given $\omega \in \Omega$, then the blow-up is characterized by a
 self-similar profile (same ground state profile $Q$ from \eqref{E:Q} as in the deterministic NLS) and for $t$ close to $T(\omega)$ 
\begin{equation}\label{E:loglog}
\|\nabla u(t,\cdot) \|_{L^2_x} \sim \frac1{L(t)}, \quad L(t) \sim \left( \frac{2\pi(T-t)}{\ln|\ln(T-t)|} \right)^{\frac{1}{2}} \quad \mbox{as} \quad {t \to T=T(\omega)}, 
\end{equation}
known as the {\it log-log} rate due to the double logarithmic correction in $L(t)$. 

Thus, the solution blows up in a self-similar regime with profile converging to a rescaled ground state  profile $Q$, and the core part of the solution 
$u_c(x,t)$ behaves as follows
$$ 
u_c(t,x) \sim \dfrac{1}{L(t)^{\frac{1}{2}}} Q\left(\frac{x-x(t)}{L(t)}\right) e^{i\gamma(t)} 
$$
with parameters $L(t)$ converging as in \eqref{E:loglog}, $\gamma(t) \to \gamma_0$, and  $x(t) \to x_c$, the blow-up center $x_c$. 

Furthermore, conditionally on the existence of blow-up in finite time $T(\omega)> 0$,  $x_c$ is a Gaussian random variable;  no conditioning is
necessary in the additive case. 
\end{conjecture}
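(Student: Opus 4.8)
The plan is to establish the conjecture in three parts --- existence of finite-time blow-up, identification of the rate and profile, and the Gaussian law of the blow-up center --- combining the analytical scaffolding available from Theorem~\ref{T:1} and the prior works \cite{dBD2002c,dBD2005,MR2020} with the quantitative confirmation furnished by the mass-conservative schemes introduced above. Since the statement is a conjecture, the strategy is to supply the rigorous skeleton wherever it exists and to pin down the remaining (rate, profile, distributional) features numerically, organized by the scaling structure of \eqref{E:rescale}.

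For the existence of blow-up I would start from the virial (finite-variance) functional $V(t)=\int_\R |x|^2 |u(t,x)|^2\,dx$. In the multiplicative Stratonovich case the mass is conserved almost surely, so the computation of $\tfrac{d^2}{dt^2}V$ proceeds as in the deterministic convexity argument up to stochastic correction terms; bounding those corrections through the energy estimates and invoking the positive-probability mechanism of \cite[Thm~5.1]{dBD2005} yields finite-time blow-up on an event of positive probability for localized data with $\|u_0\|_{L^2}>\|Q\|_{L^2}$. In the additive case the mass is not conserved and in fact grows linearly (cf.~\eqref{mass_add}); here I would first localize the mass on a random stopping time as in \cite{MR2020}, then exploit that the additive white noise acts as a steady injection of energy driving $H(u(t))$ toward arbitrarily negative values, so that $V$ becomes strictly concave and reaches zero almost surely --- this is the source of the stronger a.s.\ statement.

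The rate and profile I would treat through the rescaling \eqref{E:rescale}. The decisive heuristic is that the two singular terms in \eqref{E:v} --- the Laplacian and the nonlinearity --- scale like the most singular power of $L(t)$ as $L(t)\to 0$, whereas the rescaled noise contribution is subcritical and hence negligible in the blow-up limit. Consequently the renormalized dynamics collapse onto the deterministic profile problem \eqref{E:profile}, which in the $L^2$-critical case reduces to the ground-state equation \eqref{E:Q}; this is precisely why the profile is the same $Q$ and why the rate carries the deterministic log-log correction \eqref{E:loglog}, independently of the type or strength of the noise. I would confirm this numerically by tracking $\|\nabla u(t)\|_{L^2}$, extracting $L(t)$, fitting against the log-log law, and overlaying the renormalized solution $L(t)^{1/2}\,u(t,L(t)\xi+x(t))$ onto $Q$ across a range of $\epsilon$ and both noise types.

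Finally, the Gaussian character of $x_c$ I would obtain from a modulation decomposition $u(t,x)\approx L(t)^{-1/2}Q\!\left((x-x(t))/L(t)\right)e^{i\gamma(t)}$ about the (now identified) profile: projecting \eqref{E:NLS} onto the translation direction produces a closed equation for $x(t)$ whose driving term is a projection of the white noise, so that $x(t)$ accumulates independent Gaussian increments and its limit $x_c$ as $t\to T(\omega)$ is normally distributed; in the additive case no conditioning on blow-up is needed, matching the a.s.\ existence above, and the ensemble normality would be checked over many realizations. The main obstacle I anticipate is not any single estimate but the resolution of the blow-up itself under rough forcing: the log-log correction stems from the extraordinarily slow decay of $a(\tau)$ in \eqref{E:a} (cf.~\cite{MR2005,FMR2006,YRZ2019}), so that separating the genuine self-similar rate from the noise-induced mesh effects documented in \cite{DM2002a} --- and controlling the modulation equation for $x(t)$ under the borderline regularity of space-time white noise --- is where the real difficulty lies.
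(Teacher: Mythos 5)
Your numerical program is essentially the paper's own: the paper supports this conjecture not with a proof but with the mass-conservative schemes plus mesh refinement of Sections \ref{S:3}--\ref{S:4}, fitting $\log L$ against $\log(T-t)$ (slope $0.50$), overlaying the renormalized solution on $Q$, and histogramming the blow-up centers over $N_t\sim 1000$--$2400$ runs. However, the ``rigorous skeleton'' you propose contains two steps that genuinely fail. First, in the additive case your mechanism is backwards: you claim the white noise drives $H(u(t))$ ``toward arbitrarily negative values'' so that the variance $V$ becomes strictly concave a.s., but the paper's identity \eqref{mass_add} and bound \eqref{energy_add} show that additive noise injects mass and energy with \emph{positive} linear drift, ${\mathbb E}(H(u(t)))\leq H(u_0)+\frac{\epsilon^2}{2}\|\phi\|_{L^{0,1}_{2,\R}}^2 t$, and the simulations (Figure \ref{F:eps-comparison-add}) confirm ${\mathbb E}H$ growing linearly upward. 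There is no known estimate forcing the energy negative a.s., and the known analytic results (\cite[Thm 1.2]{dBD2002c}, for nondegenerate noise) give only \emph{positive probability} of blow-up; the a.s.\ statement is precisely the conjectural content, which the paper supports only through the percentages in Tables \ref{T: blow-up percentage p5}--\ref{T: blow-up percentage p7} (which are strictly below $1$ for several parameter choices at finite horizon). Second, in the multiplicative case your claim that the virial computation plus \cite[Thm 5.1]{dBD2005} yields positive-probability blow-up for all localized data with $\|u_0\|_{L^2}>\|Q\|_{L^2}$ overreaches: the cited theorems in the critical regime require finite variance and \emph{sufficiently negative energy}, whereas supercritical-mass data can have positive energy, and the stochastic correction terms in $\frac{d^2}{dt^2}V$ are not sign-controlled by the available energy estimates. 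That gap is exactly why the statement is a conjecture; the paper's evidence here is experimental (e.g., $50\,000$ trials at $u_0=0.99\,Q$ finding no blow-up below the mass threshold, consistent with \cite[Thm 2.7]{MR2020}).

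On the confirmational parts you are close to the paper but differ in two details worth noting. The paper does \emph{not} attempt to fit the double-logarithmic law \eqref{E:loglog} directly --- it fits the slope $\frac12$ and then tracks $a(\tau)$ against $1/\ln(\tau)$ (Figure \ref{NLS_5p_a}), explicitly conceding that isolating the second logarithm is out of reach even deterministically because of the extraordinarily slow decay of $a(\tau)$; your plan to fit the log-log law as stated would be numerically indistinguishable from a single-log correction. And the Gaussianity of $x_c$ is established in the paper purely empirically (histograms over increasing $N_t$, variance recorded in Table \ref{T: x_c var}); your modulation-equation derivation is an attractive heuristic going beyond the paper, but it is not carried out, and --- as you yourself flag --- projecting space-time white noise onto the translation mode at borderline spatial regularity is an open analytical obstacle, not a step one can currently close.
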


\begin{conjecture}[$L^2$-supercritical case]\label{C:2}
Let $u_0 \in H^1(\mathbb R)$ and $u(t)$ be an evolution of the SNLS equation \eqref{E:NLS} with $\sigma > 2$ and noise \eqref{D:f}. 

In the multiplicative (Stratonovich) noise case, sufficiently localized initial data blows up in finite positive (random) time with positive probability.

In the additive noise case, any initial data leads to a blow up in finite (random) time a.s.

%In both multiplicative and additive noise cases, solution blows up at a finite (random) time with a positive probability. 

If a solution blows up at a random positive time $T(\omega)>0$ for a given $\omega \in \Omega$, then the blow-up core dynamics $u_c(x,t)$ for $t$ close to $T(\omega)$ is characterized as
\begin{equation}\label{E:blowup-super}
u_c(t,x) \sim \dfrac{1}{L(t)^{\frac1{\sigma}}} Q\left(\frac{x-x(t)}{L(t)}\right) \exp \left({i \theta(t) + \frac{i}{2a(t)}\log \frac{T}{T-t}} \right),
\end{equation}
where the blow-up profile $Q$ is the $Q_{1,0}$ solution of the equation \eqref{E:profile}, $a(t) \to a$, the specific constant corresponding to the $Q_{1,0}$ profile, 
$\theta(t) \to \theta_0$, $x(t) \to x_c$, the blow-up center, and $L(t)=(2a(T-t))^{\frac12}$.
Consequently, a direct computation yields that for $t$ close to $T(\omega)$
\begin{equation}\label{E:rate-super}
\| \nabla u( t, \cdot)\|_{L_x^2} \sim \frac1{L(t)^{1-s}} \equiv {\left(2a(T-t) \right)^{-\frac12(\frac12+\frac1{\sigma})}}.
\end{equation}
Furthermore, conditionally on the existence of blow-up in finite time $T(\omega)> 0$,  $x_c$ is a Gaussian random variable; no conditioning is
necessary in the additive case.  

Thus, the blow-up happens with a polynomial rate \eqref{E:rate-super} without correction, and with profile converging to the same blow-up profile
 as in the deterministic supercritical NLS case.
\end{conjecture}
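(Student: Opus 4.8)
The plan is to confirm Conjecture \ref{C:2} \emph{numerically}, since space-time white noise is far too rough for the concentration--compactness and rigidity arguments behind Theorem \ref{T:1} to apply directly. First I would fix one of the mass-conservative schemes for \eqref{E:NLS}--\eqref{D:f} and couple it with a dynamic rescaling of the form \eqref{E:rescale}, so that the concentrating core is always resolved on a fixed grid in the rescaled variable $\xi$; the scaling function $L(t)$ is then an \emph{output} of the computation, obtained by imposing a normalization (for instance fixing $\|v(\tau,\cdot)\|$ or $v(\tau,0)$), rather than being prescribed in advance. Because each realization of the noise $W$ (approximated by $W_N$) produces a different trajectory, the confirmation is Monte Carlo: I would generate a large ensemble $\{\omega_k\}$, evolve each until $\|\nabla u(t)\|_{L^2}$ exceeds a threshold, and record the stopping time as an approximation of $T(\omega_k)$. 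In the additive case one expects blow-up for \emph{every} $\omega_k$, whereas in the multiplicative case only a positive fraction should blow up; tracking these frequencies as the grid is refined yields the first, qualitative part of the statement.

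For the rate and profile in \eqref{E:blowup-super}--\eqref{E:rate-super}, on each blowing-up realization I would extract $a(\tau)$ from \eqref{E:a} via the computed $L(t)$ and verify that $a(\tau)$ stabilizes to a \emph{nonzero} constant $a$ --- the decisive difference from the critical log-log regime of Conjecture \ref{C:1}, where $a(\tau)\to 0$. Moreover this limiting $a$ should match the specific constant attached to the $Q_{1,0}$ branch. Having $a$, the ansatz $L(t)=(2a(T-t))^{1/2}$ is checked by a least-squares fit of $\log\|\nabla u(t)\|_{L^2}$ against $\log(T-t)$, whose slope should be $-\frac12(\frac12+\frac1\sigma)=-\frac12(1-s)$ as in \eqref{E:rate-super}, with no doubly-logarithmic correction. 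In parallel I would rescale the numerical core, $L(t)^{1/\sigma}u\bigl(t,\,L(t)\,\xi+x(t)\bigr)$, and compare it pointwise with the admissible profile $Q_{1,0}$ obtained independently by solving \eqref{E:profile} (available from prior work, \cite{YRZ2019}); agreement across different noise types in \eqref{D:f} and different strengths $\epsilon$ is exactly the claimed insensitivity of the blow-up parameters to the noise.

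The remaining and most delicate assertion is that the blow-up center $x_c=\lim_{t\to T}x(t)$ is Gaussian. The heuristic is that, once the deterministic self-similar core is factored out, the translation parameter $x(t)$ is driven by the projection of the noise onto the neutral translation mode of the profile, so that its increments are approximately those of a Brownian motion and $x_c$ is the value of this process frozen at $T$; the spatial homogeneity of the white noise is what makes this mode excitation unbiased and Gaussian. Numerically I would collect $\{x_c(\omega_k)\}$ over the ensemble, form the empirical distribution, and subject it to normality diagnostics (histogram against the fitted normal density, quantile--quantile plots, and a goodness-of-fit statistic), separately in the additive and multiplicative settings; in the additive case no conditioning on blow-up is needed, since blow-up is almost sure.

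The main obstacle I anticipate is \emph{disentangling genuine blow-up dynamics from numerical and stochastic artifacts}. Because space-time white noise is only a distribution, its approximation $W_N$ injects energy at the grid scale, and the blow-up mechanism transfers energy precisely toward small scales (as observed in \cite{DM2002a}, \cite{DM2002b}); one must therefore check that the extracted rate, the convergence to $Q_{1,0}$, and the Gaussian statistics of $x_c$ are all stable under mesh refinement and under the choice among the three schemes, so that the reported polynomial rate \eqref{E:rate-super} reflects the PDE rather than the discretization. Controlling this, while simultaneously fitting $T(\omega)$ accurately enough to detect the \emph{absence} of a log-correction, is where most of the care will go.
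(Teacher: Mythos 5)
Your overall plan --- Monte Carlo ensembles over noise realizations, fitting $\log L(t)$ against $\log(T-t)$ to detect the pure power $\frac12$ slope, tracking $a(\tau)$ via \eqref{E: a} and checking its stabilization to the nonzero constant of the $Q_{1,0}$ branch, pointwise comparison of the rescaled core with $Q_{1,0}$ from \eqref{E:profile}, and empirical normality diagnostics for the ensemble of blow-up centers $x_c(\omega_k)$ --- matches what the paper actually does. But there is one genuine flaw at the heart of your numerical machinery: you propose to couple the scheme with the dynamic rescaling method based on \eqref{E:rescale}, evolving the rescaled variable $v(\tau,\xi)$ on a fixed grid. The paper explicitly rejects this, and for a structural reason, not one of convenience: dynamic rescaling requires transforming the equation itself into the $(\tau,\xi)$ frame, which presupposes enough spatial regularity of the solution (and of the forcing term) for the change of variables to make sense numerically. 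The space-time white noise destroys regularity in the space variable --- the solution is rough at the grid scale by construction, since $W_N$ injects fluctuations there --- so the rescaled equation \eqref{E:v} is not a usable object in the stochastic setting. You flag the roughness of $W_N$ yourself as an ``artifact'' to be controlled, but you do not notice that it invalidates your chosen resolution mechanism rather than merely contaminating its output.

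The paper's workaround is to keep the rescaling \emph{idea} but apply it only to the mesh, never to the equation: a finite-difference scheme on a non-uniform grid in the original variables, with a refinement criterion derived from the self-similar scaling (the quantities $(\Delta x_j)^{1/\sigma}|u_{j+1}-u_j|$ staying moderate), plus the mass-conservative midpoint interpolation \eqref{E: mass interpo} so that the discrete mass is exactly preserved through each refinement --- the standard linear interpolation loses mass by $\frac14|u^m_j-u^m_{j+1}|^2\Delta x^m_j$ per refined interval, which is fatal when tracking focusing down to $L\sim 10^{-12}$. In this framework $L(t)$ is extracted a posteriori as $L(t)=\bigl(1/\|\nabla u(t)\|_{L^2}\bigr)^{2/\alpha}$ (or $1/\|u(t)\|_\infty^\sigma$), not as the output of a normalization imposed on a rescaled evolution. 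Two smaller mismatches: in the additive case the paper does not observe blow-up for ``every $\omega_k$'' in a finite time window --- the observed blow-up fractions over $0<t<5$ depend strongly on $\epsilon$ and the amplitude $A$ (e.g., $3\%$ at $\epsilon=0.05$, $A=0.95$, $\sigma=3$), consistent with a.s.\ blow-up at a \emph{random} time but not with your stated expectation for fixed-horizon runs; and for $x_c$ the paper relies purely on empirical histograms and variance tables over $N_t$ runs, without your (plausible but unverified) heuristic of Brownian excitation of the translation mode.
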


As it was mentioned above, some parts of the above conjectures have been studied and partially confirmed in \cite{DM2002a}, \cite{DM2002b}, \cite{dBD2005}, \cite{dBD2001} under various 
conditions. %For example, in \cite{dBD2002c} and \cite{dBD2005}  it was shown that in the supercritical case, blow-up occurs  with a positive probability  for a finite variance and negative energy initial data.
%In \cite{dBD2005} it was shown that in the case of the multiplicative noise in the $L^2$-supercritical case, for  any initial data (with some extra conditions) blows up in finite time occurs before any given positive time with positive probability.}
In this work we provide confirmation to both conjectures for various initial data (see also \cite{DM2002a}). We note that this paper is the first work, where the dynamics of blow-up solutions such as profiles, rates, location, are investigated in the stochastic setting.  

The paper is organized as follows. In Section \ref{S:2} we give a description of the driving noise and recall analytical estimates for mass and energy in both 
multiplicative and additive settings. In Section \ref{S:3} we introduce three numerical schemes which are mass-conservative in deterministic and 
multiplicative noise settings, and one of them is energy-conservative in the deterministic setting. We discretize mass and energy and give theoretical upper bounds 
on those discrete analogs in \S \ref{S:3.2} and \ref{S:3.3}; this is followed by the corresponding numerical results, which track both mass and energy in various 
settings, and time dependence on the noise type and strength, and other discretization parameters (such as length of the interval, space and time step-sizes). 
In the following Section \ref{S:4} we create a mesh refinement strategy and make sure that it also conserves mass before and after the refinement, 
introducing a new mass-interpolation method. We then state our new full algorithm for the numerical study of solutions behavior for both deterministic 
and stochastic settings. We note that this algorithm is novel even in the deterministic case for studying the blow-up dynamics (typically the dynamic rescaling
 or moving mesh methods are used). The new algorithm is needed due to the stochastic setting, since noise creates rough solutions, 
 compared with the deterministic case, and thus, the previous methods are simply not applicable.  
 In Section \ref{S:solitons} we start considering global dynamics (for example, of solitons, and how noise affects the soliton solutions) 
 and compare with the previously known results in the $L^2$-subcritical case.  
Finally, in Section \ref{S: blow-up} we study the blow-up dynamics in both the $L^2$-critical ($\sigma =2$) and $L^2$-supercritical (e.g., $\sigma=3$) cases.
 We observe that once a blow-up starts to form, the noise does not seem to affect either the blow-up profile or the blow-up rate. 
 The only affect that we have observed is random shifting of the blow-up center of the rescaled ground state. With increasing number of runs, the variation in 
 the center location appears to be distributed normally (we estimate the corresponding mean, which is very close to 0, and variance). 
 Otherwise, there seems to be very little difference between the multiplicative/additive noise and deterministic settings. 
 We give a summary of our findings in the last section.

{\bf Acknowledgments.} This work was partially written while the first author visited Florida International University. She would like to thank FIU for the hospitality and the financial support. A. M.'s research has been conducted within the FP2M federation (CNRS FR 2036). 
S.R. was partially supported by the NSF grant DMS-1815873/1927258 as well as part of the K.Y.'s research and travel support to work on this project came from the above grant.
\smallskip

\section{Description of noise and its effect on mass and energy}\label{S:2}

\subsection{Description of the driving noise} \label{S:hat}
%We first give the description of the driving noise. 
The space-time white noise is defined in terms of a  real-valued  zero-mean Gaussian random field 
\[ \{ W(B)\; : \; B \; \mbox{\rm bounded measurable subset of  }\; [0,+\infty) \times \R\}\]
defined on a 
probability space $(\Omega, {\mathcal F},P)$, with covariance given by
$$
E\big[ W(B) W(C)\big]= \int_{B\cap C} dt\, dx
$$
for  bounded  measurable  subsets $B,C$ of  $[0,\infty)\times \R$. 
For $t\geq 0$ let 
$$ 
{\mathcal F}_t:= \sigma(W(B):  \; B \; \mbox{\rm bounded measurable subset of } \; [0,t]\times \R).
$$
Given $0\leq t_1<t_2$  and a step function $h_N=\sum_{l=1}^N a_l 1_{[y_l, y_{l+1})}$,  where 
$a_l\in \R$ and $y_1<y_2< \cdots < y_{N+1}$ are real numbers, we let $\int_{t_1}^{t_2}\int_{\R} h_N(x) W(ds,dx):= 
\sum_{l=1}^N a_l W([t_1,t_2]\times [y_l,y_{l+1}))$.
Given $0\leq t_1<t_2$ and a function $h \in L^2(\R ; \R)$, we can define the stochastic Wiener integral 
$ \int_{t_1}^{t_2}  \int_{\R} h(x)  W(dt,dx) $ as the $L^2(\Omega)$ limit of $\int_{t_1}^{t_2}  \int_{\R} h_N(x)  W(dt,dx) $ for any sequence
of step functions $h_N$ converging to $h$ in $L^2(\R ;\R)$. This stochastic integral is a centered Gaussian random variable with variance
$[t_2-t_1] \int_{\R} |h(x)|^2 dx$. Furthermore, if $h_1$, $h_2$ are orthogonal functions in $L^2(\R;\R)$ with $\| h_1\|_{L^2}=\|h_2\|_{L^2}=1$, 
the processes $\{ \int_0^t h_1(x) W(dt,dx)\}_{t\geq 0}$ and $\{ \int_0^t h_2(x) W(dt,dx)\}_{t\geq 0}$ are independent Brownian motions for
the filtration $({\mathcal F}_t , t\geq 0)$. 

Let $\{ e_j\}_{j \geq 0}$ be an orthonormal basis of $L^2(\R ;\R)$ and let $\beta_j(t)=\int_0^t \int_{\R} e_j(x) W(ds,dx)$, $j \geq 0$. The processes
$\{ \beta_j\}$ are independent one-dimensional Brownian motions and we can formally write
\begin{equation}\label{E:noise1}
W(t,x,\omega) =\sum_{j\geq 0}   \beta_j(t,\omega)  e_j(x), \quad t\geq 0, \; x\in \R, \;  \omega \in \Omega.
\end{equation}
However, the above series does not converge in $L^2(\R)$ for a fixed $t > 0$. 
To obtain an $L^2(\R ;\R)$-valued %(resp. $H^1(\R;\R)$-valued) 
Brownian motion, we should replace the space-time white noise $W$ by a Brownian
motion white in time and colored in space. More precisely, in the above series we should replace $e_j$ by $\phi e_j$ for some operator $\phi$ in $L^{0,0}_{2,\R}$, 
%(resp. $L^{0,1}_{2,\R}$), 
which is a Hilbert-Schmidt operator from $L^2(\R)$ to $L^2(\R)$  
% (resp. from $L^2(\R;\R)$ to $H^1(\R;\R)$)  
with the Hilbert-Schmidt norm $\|\phi\|_{L_{2,\R}^{0,0}}$. 
% (resp.  $\|\phi\|_{L_{2,\R}^{0,1}}$).  
This would yield $\tilde{W}(t)=\sum_{j\geq 0} \beta_j(t) \phi e_j$ for some sequence $\{\beta_j\}_{j\geq 0} $ of independent one-dimensional Brownian motions 
and some orthonormal basis $\{e_j\}_{j\geq 0}$ of $L^2(\R;\R)$. The covariance operator  ${\mathcal Q}$  
%here I change the notation because of the ground state}
of $\tilde{W}$ is of the trace-class with ${\rm Trace}\; {\mathcal Q}=\|\phi\|_{L_{2,\R}^{0,0}}^2$.
 %(resp. ${\rm Trace}\; Q=\|\phi\|_{L_{2,\R}^{0,1}}^2$). 

For practical reasons, we will use approximations of the space-time white noise  $W$ (thus, a more regular noise) using finite sums 
\begin{equation}\label{WN}
W_N(t,x,\omega):= \sum_{j=0}^N \beta_j(t,\omega)  e_j(x),
\end{equation}
with functions $\{e_j \}_j$ with disjoint supports, which are normalized in $L^2(\R ; \R)$.
This finite sum gives rise to an  $L^2(\R ;\R)$-valued  Brownian motion with the covariance operator ${\mathcal Q}$ such that ${\rm Trace}\; {\mathcal Q}=N+1$. 

Unlike \cite{DM2002a}, \cite{DM2002b},  we will not suppose that the functions $\{e_j\}$ are indicator functions of disjoint intervals.
 Instead we will consider the following ``hat" functions, which  belong to $H^1$.

%The considered space-time white noise $W(dx,dt)$ from \eqref{E:noise1}, or rather its approximation $W_N$ from \eqref{WN}, 
%is defined via CONS (complete orthonormal system) $\{e_j\}_j$, which we describe here. 

%We next describe the CONS (complete orthonormal system) $\{e_j\}_j$ used in this paper. 
For fixed $N\geq 1$ we consider the hat functions 
$\{g_j\}_{0\leq j \leq N}$ defined on the space interval $[x_j, x_{j+1}]$ as follows. 
Let $x_{j+\frac{1}{2}}:=\frac{1}{2}\big[ x_j+x_{j+1}]$, $\Delta x_j:= x_{j+1}-x_j$, 
and for $j=0, \cdots N-1$, set
$$
g_j(x):= \begin{cases}
c_j (x-x_j) \quad \mbox{\rm for} \quad x \in [x_j,x_{j+\frac{1}{2}}], \\
c_j (x_{j+1} -x) \quad \mbox{\rm for} \quad x \in [x_{j+\frac{1}{2}}, x_{j+1}], 
\end{cases}
$$ 
where $c_j:= \frac{2 \sqrt{3}}{(\Delta x_j)^{3/2}}$ %\frac{3}{2}}}$  
is chosen to ensure  $\|g_j\|_{L^2}=1$. 
 
Given points $x_0< x_1< \cdots <x_N$, define the functions $e_j$, $j=0, \cdots, N$, by
\begin{align} \label{def_ej}
 \begin{cases}  e_j = &g_{j-1} 1_{[x_{j-\frac{1}{2}}, x_j]} + g_j 1_{[x_j, x_{j+\frac{1}{2}}]}, \;  1\leq j\leq N-1,  \\
  e_0=& \sqrt{2}  g_0 1_{[x_0, x_{\frac{1}{2}}]} , \quad e_N= \sqrt{2}  g_{N-1} 1_{[x_{N-\frac{1}{2}}, x_N]}.
  \end{cases}
\end{align} 
Due to the symmetry of the functions $\{g_j\}$, we have $\|e_j\|_{L^2}=1$ for $j=0, \cdots, N$. Since the functions $\{e_j\}$'s  have disjoint supports, 
they are orthogonal 
in $L^2(\R ;\R)$. We can now construct an orthonormal basis 
%it is here  an infinite sequence containing finitely many elements} %set 
$\{e_k\}_{k\geq 0}$ of $L^2(\R ;\R)$ containing the above $\{e_j\}_{0\leq j\leq N}$ set as  the first $N+1$ elements.
 Then the $L^{0,0}_{2,\R}$-Hilbert-Schmidt norm of the orthogonal projection $\phi_N$ 
from $L^2(\R;\R)$ to the span of  $\{e_j\}_{0\leq j\leq N}$ is equal to $N+1$. 
  
Furthermore, unlike indicator functions, the above functions $\{e_j\}$ belong to $H^1$. 
When the mesh size $\Delta x_j$ is constant (equal to $\Delta x$), an easy computation yields $\|e_j\|_{H^1}^2=1+ \frac{12}{(\Delta x)^2}$ and 
%{\color{blue} 
$\| \nabla e_j\|^2_{L^\infty}= \frac{12}{(\Delta x)^3}$. %} 
Therefore, if $\| \cdot \|_{L^{0,1}_{2,\R}}$ denotes the Hilbert-Schmidt norm from $L^2(\R;\R)$ to
$H^1(\R;\R)$, we have  
\begin{equation}\label{HS_H1norm}
\|\phi_N\|_{L^{0,1}_{2,\R}}^2 = (N+1) \Big( 1+ \frac{12}{(\Delta x)^{2}}\Big) \sim 
\frac{12\, (N+1)}{(\Delta x)^{2}} \quad \mbox{when} \quad \Delta x <<1,
\end{equation}
and  
\begin{equation}\label{E:m-phi}
m_{\phi_N} := \sup_{x\in \R} \sum_{k\geq 0} \big| \nabla (\phi_N e_k)(x)\big|^2 = \frac{12\, (N+1)}{(\Delta x)^3}.
\end{equation}

\subsection{Multiplicative noise} 
We recall that this stochastic perturbation on the right-hand side of \eqref{E:NLS} is $f(u)= u(x,t) \circ {W}(dt,dx)$, where the multiplication understood via the Stratonovich integral, which makes sense for a more regular noise. 
%(for example, when $W$ is replaced by its approximation $W_N$).
When the noise $\tilde{W}=\sum_{j\geq 0} \beta_j \phi e_j$  is  regular in the space variable 
(that is, colored in space by means of the operator  $\phi \in L^{0,0}_{2,\R}$), 
%in the multiplicative case, 
the equation \eqref{E:NLS} conserves mass almost surely (see \cite[Proposition 4.4]{dBD2003}), i.e., for any $t>0$
\begin{align}\label{D: mass}
M[u(t)]=%\int_{\R} |u(t,x)|^2 dx= 
M[u_0] \quad \mbox{\rm a.s.}
\end{align} 
Using the time evolution of  energy in the multiplicative case for a regular noise $\tilde{W}$ (see  \cite[Proposition 4.5]{dBD2003}, we have
\begin{align*}
H(u(t)) = & H(u_0) - \mbox{\rm Im } \epsilon \sum_{j\geq 0} \int_0^t  \int_{\R} \bar{u}(s,x)\nabla u(s,x) \cdot (\nabla \phi e_j)(x) \, dx d\beta_j(s)   \\
& +\frac{\epsilon^2}{2} \sum_{j\geq 0} \int_0^t \int_{\R} |u(t,x)|^2 \; |\nabla (\phi e_j)|^2 \, dx ds.  
\end{align*}
Taking expected values and using the fact that $\phi$ is Radonifying from $L^2(\R;\R)$ to $\dot{W}^{1,\infty}(\R;\R)$,  
we deduce that % (as in \cite[(2.6)]{MR2018} ) that 
\begin{equation} \label{energy_mul}
{\mathbb E}(H\big( u(t)\big) = H(u_0) + \frac{\epsilon^2}{2} \, {\mathbb E}  \sum_{j\geq 0} \int_0^t \int_{\R}  |u(s,x)|^2   \big| \nabla (\phi e_j)(x)\big|^2 
dx ds
\leq H(u_0) + \frac{\epsilon^2}{2} m_\phi M(u_0)\, t,
\end{equation}
where 
\begin{equation} 
m_\phi:=  \sup_{x\in \R} \sum_{j\geq 0} | \nabla(\phi e_j)(x)|^2<\infty. 
\end{equation} 
We also consider the expected value of the supremum in time of the energy (Hamiltonian).
 However, the upper bound differs depending on the critical or supercritical cases. For exact statements and notation we refer the reader to
  \cite[Section 2]{MR2020}, where it is shown that for any stopping time
$\tau <\tau^*(u_0)$ (here, $\tau^*(u_0)$ is the {\it random} existence time  from the local theory), one has
\begin{align}\label{E:supH}
\mathbb E \Big(\sup_{s\leq \tau} H\big( u(s)\big) \Big) 
\leq  \mathbb E\big(H(u_0)\big)+\frac{\epsilon^2}{2} m_\phi \, M(u_0) \, \mathbb E(\tau)  \nonumber %\\
 \; + 3 \epsilon \, \sqrt{m_\phi M(u_0)} \,
\mathbb E\Big( \sqrt{\tau} \, \sup_{s\leq \tau} \| \nabla u(s) \|_{L^2(\mathbb R^n)}\Big). %;
\end{align}
Therefore, the bound on the energy depends on the growth of the last gradient term. In the $L^2$-critical case, assuming that $\|u_0\|_{L^2} < \|Q\|_{L^2}$,
it is possible to control the kinetic energy  $\|\nabla u(s)\|_{L^2}^2$ in terms of the energy $H(u)$ (see e.g. \cite[(2.15)]{MR2020}). 
Therefore, $\tau^*(u_0)=+\infty$ a.s. (see \cite[Thm 2.8]{MR2020}), and thus, for large times the upper estimate for the growth of the energy is at most linear: for any $t>0$ we have
\begin{equation}\label{E:supH-1}
\mathbb E \Big(\sup_{s\leq t} H\big( u(s)\big) \Big) 
\leq  \mathbb E\big(H(u_0)\big)+ 
c_1 \epsilon^2 \, m_\phi M(u_0)\,  t + c_2 \, \epsilon \, \sqrt{m_\phi M(u_0)} \, \sqrt{t}.
\end{equation}

In the $L^2$-supercritical case it is more delicate to control the gradient;  nevertheless, it is possible for some (random) time interval (for which we provide 
 upper and lower bounds  in \cite[Thm~2.10]{MR2020}). The length of that time interval is inversely proportional to the strength of the noise $\epsilon$, 
 the space correlation  $m_{\phi}$, and the size of the initial mass $M(u_0)$ to some power depending on $\sigma$.

\subsection{Additive noise}
The additive perturbation in \eqref{E:NLS} is $f(u)= {W}(dt,dx)$. In this case, 
%For an additive perturbation, 
mass is no longer conserved.
It is easy to see that its expected value grows linearly in time. More precisely,
the identity 
$$ 
M\big( u(t)\big) = M(u_0) + \epsilon^2 \|\phi\|_{L_{2 \mathbb R}^{0,0}}^2\;   t- 2 \epsilon \, \mbox{\rm Im } \Big( \sum_{j= 0}^N  \int_0^t \!\!\int_{\R} u(s,x)
\overline{\phi e_j(x)} dx d\beta_j(s) \Big)
$$
(see e.g. \cite[page 106]{dBD2003} or \cite[(3.2)]{MR2020} ) implies  
\begin{equation} \label{mass_add}
{\mathbb E}(M(u(t)))=M(u_0) + \epsilon^2 \| \phi\|_{L^{0,0}_{2,\R}}^2 t.
\end{equation}
For the energy bound, using \cite[(3.3)]{MR2020} (see also \cite[Prop~3.3]{dBD2003}), we have 
\begin{align*}
H(u(t)) \leq & H(u_0) + \frac{\epsilon^2}{2} \tau \|\phi\|_{L_{2, \mathbb R}^{0,1}}^2 + \mbox{\rm Im }\epsilon  \Big(\sum_{k\geq 0} \int_0^\tau \!\! \int_{\R} 
\nabla \overline{u(s,x)} \nabla (\phi e_k)(x)\, dx \, d\beta_k(s) \Big)\nonumber \\
&\;   - \mbox{\rm Im } \epsilon \Big( \sum_{k\geq 0}  \int_0^\tau \!\! \int_{\R}  
| u(s,x)|^{2\sigma} \overline{u(s,x)} (\phi e_k)(x) dx d\beta_k(s) \Big).
\end{align*}
Taking expected values, we deduce the
following {\it linear} upper bound for the time evolution of the expected (instantaneous) energy 
\begin{equation}\label{energy_add}
{\mathbb E}\big( H(u(t))\big) 
\leq H(u_0) + \frac{\epsilon^2}{2} \| \phi\|_{L^{0,1}_{2,\R}}^2 t .
\end{equation}

%In order to track the growth of energy, besides $\mathbb{E} (H(u(t)))$ we
%also investigate $\mathbb E (\sup_{s< \tau} H(u(s)))$.
As in the multiplicative case, in order to have quantitative information on the expected time of the existence interval, we have to prove upper bounds on  
$\mathbb{E} \big(\sup_{s<\tau} H(u(s))\big)$. However, since in the additive noise case the mass is not conserved and grows linearly in time, 
we have to localize the energy estimate
on a (random) set, where the mass can be controlled  
%{\color{blue} 
(for details see \cite[Section 3]{MR2020}. % in the $L^2$-critical case 
%see \cite[Lemma 3.2]{MR2020} %and   \cite[(3.8)]{MR2020} in the $L^2$-supercritical case). } 
With that localization and estimates on the time, 
the upper bound for the expected energy is linear in time; furthermore, the time existence of solutions is inversely proportional to $\epsilon^2$ 
and the correlation  $m_{\phi}$.  %(see \cite[Section 3]{MR2020}).

Next, we would like to investigate the mass and energy quantities numerically. For that we define discretized (typically referred to as {\it discrete}) analogs
 of mass and energy, we also introduce several numerical schemes, which we use to simulate solutions, and thus, track the above quantities. 
 We first prove theoretical upper bounds on the discrete mass and energy in both multiplicative and additive noise cases, 
 and then provide the results of our numerical simulations.

\section{Numerical approach}\label{S:3}

We start with introducing our numerical schemes for the SNLS \eqref{E:NLS}. 
We present three numerical schemes that conserve the discrete mass in the deterministic and with a multiplicative stochastic perturbation. Furthermore, 
one of them also conserves the discrete energy (in the deterministic case). That mass-energy conservative (MEC) scheme  is a highly nonlinear scheme, 
which involves additional steps of Newton iterations, slowing down the computations significantly and generating numerical  errors. 
We simplify that scheme first to the Crank-Nicholson (CN) scheme, which is still nonlinear, though works slightly faster. 
Then after that we introduce a linearized extrapolation (LE) scheme, that is much faster (no Newton iterations involved) while producing tolerable errors. 
Before describing the schemes, we first define
the finite difference operators on the non-uniform mesh.

\subsection{Discretizations}\label{S:3a}
\subsubsection{Finite difference operator on the non-uniform mesh} 
We start with the description of an efficient way to approximate the space derivatives $f_x$ and $f_{xx}$. 

Let $\left\lbrace x_j \right\rbrace_{j=0}^N$ be the grid points on $[-L_c,L_c]$ (the points $x_j$'s are not necessarily  equi-distributed). 
From the Taylor expansion of $f(x_{j-1})$ and $f(x_{j+1})$ around  $x_j$, setting $f_j=f(x_j)$ and  $\Delta x_j=x_{j+1}-x_j$, one has
\begin{align}\label{E: D1 u}
f_x(x_j) \approx
\frac{-\Delta x_{j}}{\Delta x_{j-1}(\Delta x_{j-1}+\Delta x_{j})} f_{j-1}
 +\frac{\Delta x_{j}-\Delta x_{j-1}}{\Delta x_{j-1}\Delta x_{j}} f_j  
+\frac{\Delta x_{j-1}}{(\Delta x_{j-1}+\Delta x_{j})\Delta x_{j}}f_{j+1}, 
\end{align}
and 
\begin{align}\label{E: D2 u}
f_{xx}(x_j) \approx
\frac{2}{\Delta x_{j-1}(\Delta x_{j-1}+\Delta x_{j})} f_{j-1}-\frac{2}{\Delta x_{j-1}\Delta x_{j}} f_j +\frac{2}{(\Delta x_{j-1}+\Delta x_{j})\Delta x_{j}}f_{j+1}.
\end{align}

%{\color{blue} 
We define the %first and 
second order finite difference operator
%\begin{align}\label{D: D1}
%\mathcal{D}_1 f_j := \frac{-\Delta x_{j}}{\Delta x_{j-1}(\Delta x_{j-1}+\Delta x_{j})} f_{j-1}
% +\frac{\Delta x_{j}-\Delta x_{j-1}}{\Delta x_{j-1}\Delta x_{j}} f_j +\frac{\Delta x_{j-1}}{(\Delta x_{j-1}+\Delta x_{j})\Delta x_{j}}f_{j+1},
%\end{align}
%and 
\begin{align}\label{D: D2}
\mathcal{D}_2 f_j := \frac{2}{\Delta x_{j-1}(\Delta x_{j-1}
+\Delta x_{j})} f_{j-1}- \frac{2}{\Delta x_{j-1}\Delta x_{j}} f_j +\frac{2}{(\Delta x_{j-1}+\Delta x_{j})\Delta x_{j}}f_{j+1}.
\end{align}

\subsubsection{Discretization of space, time and noise}\label{S:discretization}
%{Mass and energy-conservative schemes}
We denote the %semi-discretization in space by $u_j=u(x_j)$  at the $j^{th}$ grid point,  the semi-discretization in time by 
%$u^m=u(t_m)$  at the $m^{th}$ time step, and the
full discretization in both space and time by  $u_j^m:= u(t_m,x_j)$ at the $m^{\rm th}$ time step and the $j^{\rm th}$ grid point. 
We denote the size of a time step by $\Delta t_{m-1}=t_m-t_{m-1}$. 

To consider the Stratonovitch stochastic integral, we let $x_{j+\frac{1}{2}}= \frac{1}{2} \big[x_j+x_{j+1}\big]$, and we discretize the stochastic term 
in a  way  similar to that  in \cite{DM2002a}, % and \cite{DM2002b}, 
except that we use the basis $\{e_j\}_{0\leq j\leq N}$ defined in
\eqref{def_ej} instead of the indicator functions. Recall that $\{ \beta_j(t) \}_{0 \leq j \leq N}$ are the associated  independent Brownian motions for 
the approximation $W_N$ of the noise $W$ (i.e., $\beta_j(t)= \int_0^t \int_{\R} e_j(x) W(dt,dx)$). %j=0, \cdots, N$  
Following a procedure similar to that in \cite{DM2002a}, we set  
%begin{equation} \eqref{NS: relaxiation} 
$$
\chi^{m+\frac{1}{2}}_j=\frac{1}{\sqrt{\Delta t_m }}(\beta_{j}(t_{m+1})-\beta_{j}(t_{m})), \quad 0\leq j \leq N.
$$ %end{equation} 
We note that the random variables $\{ \chi^{m+\frac{1}{2}}_j\}_{j,m}$ are independent Gaussian random variables  ${\mathcal N}(0,1)$. 
In our simulation, the vector $(\chi_0^{m+\frac{1}{2}}, \cdots, \chi_N^{m+\frac{1}{2}})$ is obtained by the Matlab random number generator \texttt{normrnd}.
  
When computing a solution at the end points $x_0$ and $x_{N+1}$,  we set $u^m_0=u^m_{1}$ and $u^m_{N}=u^m_{N-1}$ for all $m$. 
We also introduce the pseudo-point $x_{-1}$ satisfying $\Delta x_{-1}=\Delta x_0$, and similarly, the pseudo-point $x_{N+1}$ satisfying $\Delta x_{N-1}=\Delta x_N$.
Let 
\begin{equation} \label{f-fhat}
f^{m+\frac{1}{2}}_j = \frac{1}{2} \big( u^m_j+u^{m+1}_j\big)  \tilde{f}^{\,m+\frac{1}{2}}_j , \quad \mbox{\rm where} \quad  \tilde{f}^{\,m+\frac{1}{2}}_j := \frac{\sqrt{3}}{2}  
  \frac{\big[ \sqrt{\Delta x_{j-1}}+\sqrt{\Delta x_j} \big] } {\sqrt{\Delta t_m} 
   \big[  \Delta x_{j-1} + \Delta x_j\big] } \chi^{m+\frac{1}{2}}_j
\end{equation} 
for $j=1, \cdots N-1$. Indeed,
\begin{align}\label{tildef} 
\tilde{f}^{\, m+\frac{1}{2}}_j =& \frac{2}{\Delta t_m (\Delta x_{j-1} + \Delta x_j)}\;  \int_{t_m}^{t_{m+1}}  d\beta_j(s) \int_{\R} e_j(x)  dx \nonumber \\
  = & \frac{2}{\Delta t_m (\Delta x_{j-1} + \Delta x_j)}\; \big( \beta_j(t_{m+1}) -\beta_j(t_{m} \big) 
\Big[ \int_{x_{j-\frac{1}{2}}}^{x_j} c_{j-1}(x_j-x) dx + \int_{x_j}^{x_{j+\frac{1}{2}}} c_j (x-x_j) dx \Big] \nonumber \\
=&\frac{\sqrt{3}}{2} \frac{\beta_j(t_{m+1})-\beta_j(t_m)}{\sqrt{\Delta t_m}} 
   \frac{\big[ \sqrt{\Delta x_{j-1}}+\sqrt{\Delta x_j} \big] } { \big[  \Delta x_{j-1} + \Delta x_j\big] } \, \chi^{m+\frac{1}{2}}_j.
\end{align}
A similar computation gives 
\begin{align} \label{f-hatf0}
f^{m+\frac{1}{2}}_0 = \frac{1}{2} \big( u^m_0+u^{m+1}_0\big)  \tilde{f}^{\, m+\frac{1}{2}}_0 , \quad \mbox{\rm where} \quad 
\tilde{f}^{\, m+\frac{1}{2}}_0=\frac{\sqrt{3}}{2} \frac{1}{\sqrt{\Delta t_m \Delta x_0}} \, \chi^{m+\frac{1}{2}}_0,\\
f^{m+\frac{1}{2}}_N = \frac{1}{2} \big( u^m_N+u^{m+1}_N\big)  \tilde{f}^{m+\frac{1}{2}}_N , \quad \mbox{\rm where} \quad 
\tilde{f}^{\, m +\frac{1}{2}}_N=\frac{\sqrt{3}}{2} \frac{1}{\sqrt{\Delta t_m \Delta x_{N}}}\, \chi^{m+\frac{1}{2}}_N.
\label{f-hatfN}
\end{align}
Note that in the definition of  $f^{m+\frac{1}{2}}_j$, the factor $\frac{1}{2} \big( u^m_j+u^{m+1}_j\big)$ is related to the approximation of the Stratonovich integral, 
and that the expression of $\tilde{f}^{\, m +\frac{1}{2}}_j $ differs from that in \cite{DM2002a} and \cite{DM2002b} for two reasons.
 On one hand, we have a non-constant space mesh, and on the other hand, even if the space mesh $\Delta x_j$ is constant (equal to $\Delta x$),
  the extra factor $\frac{\sqrt{3}}{2}$ comes from the fact that we have changed the basis $\{e_j\}_{0\leq j\leq N}$.  For a constant space mesh  $h$, we have
$$
\tilde{f}^{\,m+\frac{1}{2}}_j = \frac{\sqrt{3}}{2} \frac{1}{\sqrt{\Delta t_m} \;\sqrt{ \Delta x}} \chi^{m+\frac{1}{2}}_j,\quad j=0, \cdots, N.
$$
Next, denote  $V_j^m=|u^m_j|^{2\sigma}$ and let $ {f}^{m+\frac{1}{2}}_j$ be defined by \eqref{f-fhat}, \eqref{f-hatf0} and \eqref{f-hatfN}. Note that $\{\tilde f_j^{\, m}\}$ 
then define additive noise. At the half-time step, we let 
$$
u_j^{m+\frac{1}{2}}=\frac{1}{2}(u_j^m+u_j^{m+1}) 
\quad \mbox{\rm and} \quad  
V_j^{m+\frac{1}{2}}= \big| u_j^{m+\frac{1}{2}} \big|^{2\sigma}.
$$ 

To summarize, the discrete version of noise that we consider in this work is defined as follows
\begin{align}\label{E:noise discretization}
g_j^{m+\frac12} =
\begin{cases} 
\epsilon \, f_j^{m+\frac12},  
\quad \mathrm{multiplicative \,\, case,} \\
\epsilon \, \tilde f_j^{\, m+\frac12}, 
\qquad \mathrm{additive \,\, case},
\end{cases}
\end{align}
where $\{f_j\}$'s and $\{\tilde f_j\}$'s are defined in \eqref{f-fhat}, \eqref{f-hatf0} and \eqref{f-hatfN}.

\subsection{Three schemes}
We now consider three schemes: the mass-energy conservative (MEC) scheme (also used in \cite{DM2002a})
\begin{align}		
\label{mass-energy}
i \, \dfrac{u_j^{m+1}-u_j^m}{\Delta t_m}+\mathcal{D}_2 u_j^{m+\frac{1}{2}} + \frac{1}{\sigma +1} \frac{|u^{m+1}_j|^{2(\sigma +1)}- |u^m_j|^{2(\sigma +1)}}
{|u^{m+1}_j|^2-|u^m_j|^2} \; u^{m+\frac{1}{2}}_j  = g_j^{m+\frac{1}{2}},
\end{align}
the Crank-Nicholson (CN) scheme (which is a Taylor expansion of the previous one) 
\begin{align}\label{NS: crank-nicholson}
i \, \dfrac{u_j^{m+1}-u_j^m}{\Delta t_m}+\mathcal{D}_2 u_j^{m+\frac{1}{2}}+V_j^{m+\frac{1}{2}}u^{m+\frac{1}{2}}_j= g_j^{m+\frac{1}{2}},
\end{align}
and the new linearized extrapolation (LE) scheme, which uses the extrapolation of $V_j^{m+\frac12}$ 
\begin{align}\label{NS:relaxation}
i \, \dfrac{u_j^{m+1}-u_j^m}{\Delta t_m}+\mathcal{D}_2 u_j^{m+\frac{1}{2}}+\frac{1}{2}\left(\frac{2\Delta t_{m-1}+
\Delta t_m}{\Delta t_{m-1}} V^{m}_j- \frac{\Delta t_m}{\Delta t_{m-1}} V^{m-1}_j \right) u^{m+\frac{1}{2}}_j= g_j^{m+\frac{1}{2}},
\end{align}
where $g_j^{m+\frac12}$ is defined in \eqref{E:noise discretization}.

To compare them, we note that the schemes \eqref{mass-energy} and \eqref{NS: crank-nicholson} require to solve a nonlinear system at each time step, 
where the fixed point iteration or Newton iteration is involved (see \cite{DM2002a} for details). 
To implement the scheme \eqref{NS:relaxation}, only a linear system needs to be solved  at each time step. 
Numerically, these three schemes generate similar results (for example, the discrete mass is conserved on the order of $10^{-10} - 10^{-12}$;  see Figure \ref{F:deterministic}.
The Crank-Nicholson scheme \eqref{NS: crank-nicholson} usually requires between 2 and 8 iterations at each time step, and thus, is about $3$ times slower than the scheme \eqref{NS:relaxation}, which requires no iteration. In its turn, the mass-energy conservative scheme \eqref{mass-energy} is about 2-3 times slower than the Crank-Nicholson \eqref{NS: crank-nicholson}. Thus, for the computational time, the last linearized extrapolation scheme \eqref{NS:relaxation} is the most convenient.
We remark that the scheme \eqref{NS:relaxation} is a multi-step method. The first time step $u^1$ is obtained by applying either the scheme \eqref{NS: crank-nicholson} or \eqref{mass-energy}, and then we proceed with \eqref{NS:relaxation}.

\subsubsection{Discrete mass and energy}%\label{S:3.2}

We define the discrete mass by
\begin{align}\label{D: Dmass}
M_{\mathrm{dis}}[u^m]= \frac{1}{2}\sum_{j=0}^{N} |u^m_j|^2(\Delta x_j+\Delta x_{j-1}).
\end{align}
For $m=0$ it is the first order approximation of the integral defining the mass $M(u_0)$  in \eqref{D: mass}. 

We also define the discrete energy (similar to \cite{DM2002a}), which is adapted to non-uniform mesh as follows
\begin{equation}\label{dis-energy}
H_{\rm dis}[u^m]:= %\frac{1}{2}(\Delta x_j +\Delta x_{j-1}) \Big( 2 
\frac{1}{2} \sum_{j=0}^N  \frac{\big|u^m_{j+1}-u^m_j\big|^2}{\Delta x_j}- \frac{1}{2(\sigma+1)} \sum_{j=0}^N
\frac{\big( \Delta x_j + \Delta x_{j-1}\big)}{2} \,  \, |u^m_j|^{2(\sigma +1)}. %\Big). 
\end{equation}

In order to check our numerical efficiency, we define the discrepancy of discrete mass and energy as
\begin{align}\label{E:error-Dmass}
\mathcal{E}^m_1[M]:=\max_m \left\lbrace M_{\mathrm{dis}}[u^m] \right\rbrace-\min_m \left\lbrace M_{\mathrm{dis}}[u^m] \right\rbrace.
\end{align}
And
\begin{align}\label{E:error-Denergy}
\mathcal{E}^m[H]:=\max_m \left\lbrace H_{\mathrm{dis}}[u^m] \right\rbrace-\min_m \left\lbrace H_{\mathrm{dis}}[u^m] \right\rbrace.
\end{align}

In the deterministic case all three schemes conserve mass. 
In Figure \ref{F:deterministic} we show that the linearized (LE) scheme has the smallest error in discrete mass, since unlike the other two schemes there is no nonlinear system to solve, and thus, only the floating error comes into play. In the MEC and CN schemes the error from solving the nonlinear systems accumulate at each time step. Consequently, the resulting error is accumulate slightly above ($10^{-10}$) (there, we take 
 $|u^{m+1,k+1}-u^{m+1,k}|<10^{-10}$ as the terminal condition for solving the nonlinear system in these two schemes, where $k$  is the index of the fixed point iteration for computing  $u^{m+1} =u^{m+1,\infty}$).

The MEC scheme \eqref{mass-energy} also conserves the discrete energy \eqref{dis-energy}. 
While the other two schemes do not exactly conserve energy, the error of approximation is tolerable as shown on the right of Figure \ref{F:deterministic}. 
(Again, as we set up the tolerance $|u^{m+1,k+1}-u^{m+1,k}|<10^{-10}$ in solving the resulting nonlinear system, the discrete energy error $\mathcal{E}^m[H]$ stays around $10^{-10}$. Note that $\mathcal{E}^m[H]$ is a non-decreasing function in $m$; it increases slowly as time evolves.)

\begin{figure}[ht]
\includegraphics[width=0.45\textwidth]{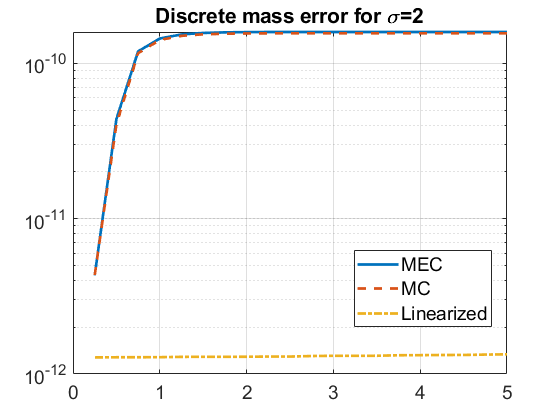}
\includegraphics[width=0.45\textwidth]{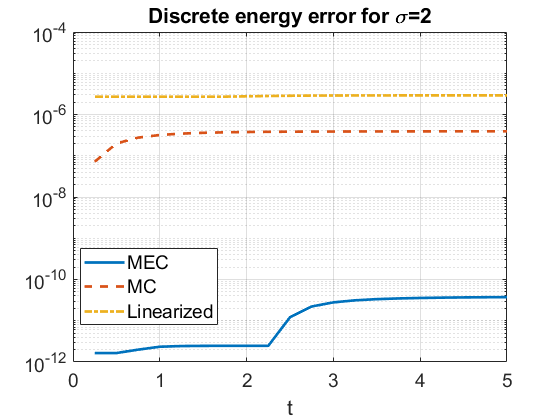}
\caption{Deterministic NLS ($\epsilon=0$), $L^2$-critical case. Comparison of errors in the three schemes: mass-energy conservative (MEC) \eqref{mass-energy}, Crank-Nicholson (CN) \eqref{NS: crank-nicholson} and linearized extrapolation (LE) \eqref{NS:relaxation}. Left: error in mass computation. Right: error in energy computation.}
\label{F:deterministic}
\end{figure}

\subsection{Discrete mass and energy for a multiplicative noise}\label{S:3.2}
We now consider a multiplicative noise, or more precisely its discrete version as defined in 
\eqref{E:noise discretization}. All three schemes conserve mass in this case. 

\begin{lemma}\label{L: numerical scheme}
The numerical schemes \eqref{mass-energy}, \eqref{NS: crank-nicholson} and  \eqref{NS:relaxation} conserve the discrete mass, that is, 
$$
M_{\mathrm{dis}}[u^{m+1}]=M_{\mathrm{dis}}[u^m], \quad m=0,1,\cdots.
$$
\end{lemma}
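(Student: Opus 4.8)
The plan is to prove all three cases at once by the discrete energy (here, mass) method: pair each scheme with the weighted conjugate half-step value and extract the imaginary part. Concretely, I would take each of \eqref{mass-energy}, \eqref{NS: crank-nicholson}, \eqref{NS:relaxation}, multiply the $j$-th equation by $\overline{u_j^{m+\frac12}}\,(\Delta x_j + \Delta x_{j-1})$ (where $u_j^{m+\frac12}=\frac12(u_j^m+u_j^{m+1})$), sum over $j=0,\dots,N$, and then take $\Im$ of the resulting scalar identity. The claim then reduces to showing that every term except the discrete time derivative contributes nothing to the imaginary part.

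For the time-derivative term I would use the elementary identity $\Re\big[(a-b)\overline{(a+b)}\big]=|a|^2-|b|^2$ together with $\Im(iz)=\Re(z)$ to get
$$
\Im\Big[ i\,\frac{u_j^{m+1}-u_j^m}{\Delta t_m}\,\overline{u_j^{m+\frac12}}\Big]
= \frac{1}{2\Delta t_m}\big(|u_j^{m+1}|^2-|u_j^m|^2\big),
$$
so that, after weighting by $(\Delta x_j+\Delta x_{j-1})$ and summing, this term equals exactly $\frac{1}{\Delta t_m}\big(M_{\mathrm{dis}}[u^{m+1}]-M_{\mathrm{dis}}[u^m]\big)$ by the definition \eqref{D: Dmass}. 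Next I would check that the nonlinear and noise contributions are purely real. For the MEC scheme the factor $\frac{1}{\sigma+1}\frac{|u_j^{m+1}|^{2(\sigma+1)}-|u_j^m|^{2(\sigma+1)}}{|u_j^{m+1}|^2-|u_j^m|^2}$ is real, so pairing with $\overline{u_j^{m+\frac12}}$ leaves $(\text{real})\cdot|u_j^{m+\frac12}|^2$; for the CN scheme $V_j^{m+\frac12}=|u_j^{m+\frac12}|^{2\sigma}$ is real; and for the LE scheme the bracketed combination of $V_j^m,V_j^{m-1}$ is real. The decisive point is the noise term: in the multiplicative case $g_j^{m+\frac12}=\epsilon f_j^{m+\frac12}=\epsilon\, u_j^{m+\frac12}\,\tilde f_j^{\,m+\frac12}$ with $\tilde f_j^{\,m+\frac12}\in\R$ by \eqref{f-fhat}, so pairing against $\overline{u_j^{m+\frac12}}$ yields $\epsilon\,\tilde f_j^{\,m+\frac12}|u_j^{m+\frac12}|^2\in\R$. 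This is precisely where the Stratonovich midpoint factor $\frac12(u_j^m+u_j^{m+1})$ is used, and it is the reason mass is conserved in the multiplicative (rather than It\^o) discretization.

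The only term requiring genuine work is the Laplacian contribution $\sum_j (\mathcal{D}_2 u_j^{m+\frac12})\,\overline{u_j^{m+\frac12}}\,(\Delta x_j+\Delta x_{j-1})$, which must be shown to be real. I would establish this by a discrete summation by parts, i.e. by verifying that $\mathcal{D}_2$ is self-adjoint for the weighted inner product $\langle f,g\rangle:=\sum_j (\Delta x_j+\Delta x_{j-1})\,f_j\,\bar g_j$. Equivalently, the matrix obtained by scaling the $j$-th row of $\mathcal{D}_2$ by $(\Delta x_j+\Delta x_{j-1})$ should be symmetric, and a direct check of the off-diagonal entries from \eqref{D: D2} gives
$$
(\Delta x_j+\Delta x_{j-1})\,(\mathcal{D}_2)_{j,j+1}=\frac{2}{\Delta x_j}=(\Delta x_{j+1}+\Delta x_j)\,(\mathcal{D}_2)_{j+1,j},
$$
confirming symmetry on the interior; hence $\langle \mathcal{D}_2 v,v\rangle\in\R$ and its imaginary part vanishes.

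The main obstacle is the boundary bookkeeping: one must incorporate the Neumann-type conventions $u_0^m=u_1^m$, $u_N^m=u_{N-1}^m$ together with the pseudo-points $x_{-1},x_{N+1}$ satisfying $\Delta x_{-1}=\Delta x_0$ and $\Delta x_{N-1}=\Delta x_N$, so that no spurious endpoint term survives the summation by parts. Once the weighted operator is seen to be symmetric including these endpoint conventions, every off-diagonal pair telescopes and cancels. Collecting all contributions, the imaginary part of the full identity reduces to $\frac{1}{\Delta t_m}\big(M_{\mathrm{dis}}[u^{m+1}]-M_{\mathrm{dis}}[u^m]\big)=0$, which proves the lemma simultaneously for all three schemes.
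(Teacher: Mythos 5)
Your proposal is correct and takes essentially the same route as the paper's proof, which consists precisely of multiplying each scheme by $\bar{u}^{m+\frac{1}{2}}_j(\Delta x_j+\Delta x_{j-1})$, summing over $j$, taking the imaginary part, and invoking the Neumann pseudo-point conventions, with the rest left as a ``straightforward computation.'' You have simply carried out that computation in full, and your details check out: the time-difference term yields $\frac{1}{\Delta t_m}\big(M_{\mathrm{dis}}[u^{m+1}]-M_{\mathrm{dis}}[u^m]\big)$, the nonlinear and Stratonovich-midpoint noise terms pair with $\bar{u}^{m+\frac{1}{2}}_j$ to give real quantities, and the weighted symmetry of $\mathcal{D}_2$ (including the boundary rows under the reflection conventions) makes the Laplacian contribution real.
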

\begin{proof}
Multiply the equations \eqref{mass-energy}, \eqref{NS: crank-nicholson} or \eqref{NS:relaxation} by $\bar{u}^{m+\frac{1}{2}}_j (\Delta x_j + \Delta x_{j-1})$,
sum among all indexes $j$, and take the imaginary part. 
Note that we impose the Neumann BC on both sides by setting the pseudo-points $u_{-1}=u_{0}$ and $u_{N}=u_{N+1}$. Then, with a straightforward computation, one obtains
$$
M_{\mathrm{dis}}[u^{m+1}]-M_{\mathrm{dis}}[u^m]=0,
$$
which completes the proof.
\end{proof}
By Taylor's expansion, it is easy to see that the schemes \eqref{mass-energy} and \eqref{NS: crank-nicholson} are of the second order accuracy $O((\Delta t_m)^3)$ at each time step $\Delta t_m$. We say the scheme \eqref{NS:relaxation} is almost of the second order accuracy because the residue is on the order 
$O((\Delta t_m)^2 \Delta t_{m-1})$. 
(Later, to make sure that blow-up solutions do not reach the blow-up time, we take the $m$th time step $\Delta t_m=\min \left\{\Delta t_{m-1}, \frac{\Delta t_0}{\|u^m\|_{\infty}^{2\sigma}} \right\}$. Thus,  $\Delta t_m \leq \Delta t_{m-1}$.)  
Therefore, while the schemes \eqref{NS: crank-nicholson} and \eqref{mass-energy} seem to be  slightly more accurate than \eqref{NS:relaxation}, all three give the same order accuracy in their application below.
%%%%%%%%%%%%%%%%%%%%%%%%%%%%%%%%%%%%%%%%%%%%%%%%%

\subsubsection{Upper bounds on discrete energy}
We now study stability properties of the time evolution of the discrete energy 
\eqref{dis-energy} for the mass-energy conserving (MEC) scheme \eqref{mass-energy}.
Let $\tau^*_{\rm dis}$ denote the existence time of the discrete scheme. For simplicity we take the uniform mesh in space and time, i.e., for each $j$ and $m$, we set $\Delta x= \Delta x_j$ and $\Delta t=\Delta t_m$. 
 In that case the discrete energy is
\[ H_{\rm dis}[u^m]:= \Delta x \Big( \frac{1}{2} \sum_{j=0}^N \Big| \frac{u^m_{j+1}-u^m_j}{\Delta x}\Big|^2 - \frac{1}{2(\sigma+1)} \sum_{j=0}^N
|u^m_j|^{2(\sigma +1)}\Big). 
\] 

\begin{proposition}\label{max_H_multi}
Let $u_0\in H^1$  and $t_M<\tau^*_{\rm dis} $ be a point of the time grid.
Then  for $\Delta x \in (0,1)$
\begin{align}	
{\mathbb E}\big(  H_{\rm dis}[u^M] \big) &\leq \, H_{\rm dis}[u^0] +   \,\frac{\epsilon\, \sqrt{3}}{2\, \sqrt{2}}   \frac{\sqrt{ \ln(2\, L_c) + |\ln(\Delta x)| }}{\sqrt{\Delta x}} 
\, \frac{1}{(\Delta t)^{\frac{3}{2}}} t_M,  
\label{E_H_mult}
\\
{\mathbb E}\Big( \max_{0\leq m\leq M} H_{\rm dis}[u^m] \Big) &\leq \, H_{\rm dis}[u^0] +   \frac{\epsilon \, \sqrt{3}}{\sqrt{2}} \,
 \frac{\sqrt{ \ln(2\, L_c) + |\ln(\Delta x)| }}{\sqrt{\Delta x}} \, 
\frac{1}{(\Delta t)^{\frac{3}{2}}} t_M. \label{E_max_H_mult}
\end{align}
\end{proposition}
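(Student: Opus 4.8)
The plan is to reduce both inequalities to a single exact per-step energy identity and then to an elementary maximal inequality for i.i.d.\ Gaussians. The starting point is to test the MEC scheme \eqref{mass-energy} against $\overline{u_j^{m+1}-u_j^m}$, multiply by $\Delta x$, sum over $j$ and take the real part. The discrete time-derivative term drops out, since $\Re\big(\frac{i}{\Delta t}\,\Delta x\sum_j|u_j^{m+1}-u_j^m|^2\big)=0$. Summation by parts on the $\mathcal D_2$ term, using the Neumann boundary conditions $u_{-1}=u_0,\ u_N=u_{N+1}$ exactly as in Lemma \ref{L: numerical scheme}, together with the elementary identity $\Re[(a+b)(\bar a-\bar b)]=|a|^2-|b|^2$ applied to the forward differences of $u^{m+1}$ and $u^m$, reproduces minus the kinetic part of $H_{\rm dis}[u^{m+1}]-H_{\rm dis}[u^m]$. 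The tailored difference quotient in \eqref{mass-energy} is built precisely so that its real part against $u_j^{m+\frac12}\overline{(u_j^{m+1}-u_j^m)}$ equals $\frac{1}{2(\sigma+1)}\big(|u_j^{m+1}|^{2(\sigma+1)}-|u_j^m|^{2(\sigma+1)}\big)$, i.e.\ minus the potential part. Collecting terms, and using $\Re[(u_j^m+u_j^{m+1})\overline{(u_j^{m+1}-u_j^m)}]=|u_j^{m+1}|^2-|u_j^m|^2$ on the noise term $g_j^{m+\frac12}=\epsilon\,\tfrac12(u_j^m+u_j^{m+1})\tilde f_j^{m+\frac12}$ (recall $\tilde f_j^{m+\frac12}$ is real), yields the exact identity
\begin{equation*}
H_{\rm dis}[u^{m+1}]-H_{\rm dis}[u^m]=-\frac{\epsilon}{2}\,\Delta x\sum_{j}\big(|u_j^{m+1}|^2-|u_j^m|^2\big)\,\tilde f_j^{m+\frac12}.
\end{equation*}

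For the first bound \eqref{E_H_mult} I would exploit the martingale structure: $u^m$ is $\mathcal F_{t_m}$-measurable, whereas $\tilde f_j^{m+\frac12}$ is a mean-zero multiple of the increment $\beta_j(t_{m+1})-\beta_j(t_m)$ and is independent of $\mathcal F_{t_m}$, so $\mathbb E\big[\sum_j|u_j^m|^2\tilde f_j^{m+\frac12}\big]=0$. Only the $|u_j^{m+1}|^2$ term survives, which gains the decisive factor of $2$: by discrete mass conservation (Lemma \ref{L: numerical scheme}), $\Delta x\sum_j|u_j^{m+1}|^2=M_{\rm dis}[u^0]=M(u_0)$, whence $\big|\mathbb E(H_{\rm dis}[u^{m+1}]-H_{\rm dis}[u^m])\big|\le\frac{\epsilon}{2}M(u_0)\,\mathbb E(\max_{0\le j\le N}|\tilde f_j^{m+\frac12}|)$. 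On the uniform mesh $\tilde f_j^{m+\frac12}=\frac{\sqrt3}{2}(\Delta t\,\Delta x)^{-1/2}\chi_j^{m+\frac12}$ with $\{\chi_j^{m+\frac12}\}_j$ i.i.d.\ $\mathcal N(0,1)$, and the standard estimate $\mathbb E(\max_{0\le j\le N}|\chi_j^{m+\frac12}|)\le\sqrt{2\ln(2(N+1))}$, together with $N\sim 2L_c/\Delta x$, gives $\mathbb E(\max_j|\tilde f_j^{m+\frac12}|)\le\frac{\sqrt3}{2}(\Delta t\,\Delta x)^{-1/2}\sqrt{2(\ln(2L_c)+|\ln\Delta x|)}$. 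Summing over $m=0,\dots,M-1$ and using $M\Delta t=t_M$ produces exactly \eqref{E_H_mult}.

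For the running-maximum bound \eqref{E_max_H_mult} the conditional cancellation is no longer available, so I would instead use the pathwise estimate $\max_{0\le m\le M}H_{\rm dis}[u^m]\le H_{\rm dis}[u^0]+\sum_{m=0}^{M-1}\big|H_{\rm dis}[u^{m+1}]-H_{\rm dis}[u^m]\big|$ and bound each increment by the full $\epsilon\,M(u_0)\max_j|\tilde f_j^{m+\frac12}|$, now via $\Delta x\sum_j(|u_j^{m+1}|^2+|u_j^m|^2)=2M(u_0)$. Taking expectations and repeating the Gaussian estimate gives precisely twice the right-hand side of \eqref{E_H_mult}, i.e.\ \eqref{E_max_H_mult}. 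This structural dichotomy --- conditional cancellation for the expectation versus a pathwise bound for the running maximum --- is exactly what accounts for the factor of $2$ between the two inequalities.

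The main obstacle is the energy identity of the first paragraph: verifying that the deterministic part of \eqref{mass-energy} telescopes \emph{exactly} into $H_{\rm dis}[u^{m+1}]-H_{\rm dis}[u^m]$, with no remainder. This hinges on the boundary handling in the summation by parts and, above all, on the algebraic identity satisfied by the special difference quotient, which is the entire reason the MEC scheme is energy-conservative in the deterministic case. Once that identity is secured, the probabilistic content is routine, the only quantitative input being the elementary bound on $\mathbb E(\max_j|\chi_j^{m+\frac12}|)$ and the comparison $N\sim 2L_c/\Delta x$ that turns $\ln(2(N+1))$ into $\ln(2L_c)+|\ln\Delta x|$.
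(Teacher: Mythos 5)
Your proposal is correct and follows essentially the same route as the paper: testing the MEC scheme \eqref{mass-energy} against $\overline{u_j^{m+1}-u_j^m}$ so that the deterministic part telescopes exactly into the discrete energy increment, then splitting the noise contribution $\frac{\epsilon}{2}\Delta x\sum_j\big(|u_j^{m+1}|^2-|u_j^m|^2\big)\tilde f_j^{\,m+\frac{1}{2}}$ into the adapted mean-zero part (which the paper phrases as a stochastic integral of the step process $U$ against $W_N$, while you invoke independence of the Brownian increments --- the same cancellation) plus the anticipating $|u_j^{m+1}|^2$ part, bounded via discrete mass conservation and the Gaussian maximal estimate $\mathbb{E}\big(\max_{0\leq j\leq N}|\chi_j^{m+\frac{1}{2}}|\big)\leq\sqrt{2\ln(2(N+1))}$, which is exactly the paper's Pisier-lemma step. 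Your treatment of the running maximum, replacing $|u_j^{m+1}|^2-|u_j^m|^2$ by $|u_j^{m+1}|^2+|u_j^m|^2$ to produce the factor of $2$, is also precisely the paper's argument, so the two proofs coincide step for step (and your intermediate bound, like the paper's, carries a factor $M_{\rm dis}[u^0]$ that the final statement of the proposition silently drops).
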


\begin{proof}
Multiplying equation \eqref{mass-energy} by $- \Delta x\, (\bar{u}_j^{m+1}-\bar{u}^m_j)$, adding for $m=0, ..., M-1$ and $j=0, ..., N$, and using the conservation of the
discrete energy in the deterministic case, we deduce that for some real-valued random variable $R(M,N)$, which changes from one line to the next,
\begin{align}\label{upp_H_multi}
H_{\rm dis}[u^M]=&\, H_{\rm dis}[u^0] +i R(M,N) + \epsilon \, \Delta x \,   \sum_{m=0}^{M-1} \sum_{j=0}^N  (\bar{u}^{m+1}_j-\bar{u}^m_j) 
 \, \frac{1}{2}\big( u^{m+1}_j+u^m_j\big)
 \tilde{f}^{\,m+\frac{1}{2}}_j  \nonumber \\
 =& H_{\rm dis }[u^0] + i R(M,N)   + \frac{ \epsilon\, \Delta x}{2}   \sum_{m=0}^{M-1} \sum_{j=0}^N   \, \big(  |u^{m+1}_j|^2 - |u^{m}_j|^2\big) \, 
   \tilde{f}^{\,m+\frac{1}{2}}_j, \\
 = &\, H_{\rm dis }[u^0] + i R(M,N)   - \frac{\epsilon}{2}  \,  \frac{1}{\Delta t} \, \int_0^{t_M}\!  \int_{\R} 
  |{U}(s,x)|^2 W_N(ds,dx) % \nonumber \\
%  &\quad + 
+  \frac{ \epsilon\, \Delta x}{2}   \sum_{m=0}^{M-1} \sum_{j=0}^N   \,  |u^{m+1}_j|^2 \,   \tilde{f}^{\,m+\frac{1}{2}}_j,  \label{upp_H_multi_1}
  \end{align} 
where $U(s,x)$ is the step process defined by $U(s,x)=u^m_j$ on the rectangle $[t_m,t_{m+1})\times [x_{j-\frac{1}{2}} , x_{j+\frac{1}{2}})$. 
Since the discrete mass is preserved by the scheme (Lemma \ref{L: numerical scheme}), we have 
$$ 
\frac{ \epsilon\, \Delta x}{2}   \sum_{m=0}^{M-1} \sum_{j=0}^N   \,  |u^{m+1}_j|^2 \,   \tilde{f}^{\, m+\frac{1}{2}}_j \leq
\frac{\epsilon}{2} \sum_{m=0}^{M-1} \max_{0\leq j\leq N} | \tilde{f}^{\, m+\frac{1}{2}}_j| \sum_{j=0}^N \Delta x \, |u^{m+1}_j|^2 =
\frac{\epsilon\, M_{\rm dis}[u^0]}{2} \sum_{m=0}^{M-1} \max_{0\leq j\leq N} | \tilde{f}^{\, m+\frac{1}{2}}_j|. 
$$ 
Using the definition of $\tilde{f}^{\, m+\frac{1}{2}}_j$ in \eqref{f-fhat}, \eqref{f-hatf0} and \eqref{f-hatfN}, we deduce  
$$ 
E\Big( \max_{0\leq j\leq N} | \tilde{f}^{\, m+\frac{1}{2}}_j| \Big) = \frac{\sqrt{3}}{2\, \sqrt{\Delta t} \sqrt{\Delta x}} E\Big( \max_{0\leq j\leq N} | \chi^{m+\frac{1}{2}}_j| \Big),
$$
where the random variables $\chi^{m+\frac{1}{2}}_j$ are independent standard Gaussians. 

Using Pisier's lemma (see e.g. \cite[Lemma 10.1]{Lif2012}),
one observes that if $\{G_k\}_{ k=1, ...,n}$ are independent standard Gaussians and $M_n=\max_{1\leq k\leq n} |G_k|$,  we have  for $n\geq 2$
\begin{equation}\label{E_max_G}
\mathbb{E}(M_n)  \leq  \sqrt{2 \, \ln(2\, n)} . %+ \frac{1}{\sqrt{2\pi} \ln(n)}.
\end{equation}
%%%%%%%
 We enclose the proof below for the sake of completeness. For any $\lambda >0$, using the Jensen inequality and the fact that $x\mapsto 
 e^{\lambda x}$ is increasing,   we deduce
 \begin{align*}
\exp\Big( \lambda \,   {\mathbb E}\Big[ \max_{1\leq k\leq n} |G_k|\Big]\Big) \leq &\; {\mathbb E} \Big( \exp\Big[ \lambda \max_{1\leq k\leq n}
\big|G_k\big| \Big] \Big) \leq {\mathbb E} \Big( \max_{1\leq k\leq n} \exp\big( \lambda |G_k|\big) \Big) \\
\leq & \; \sum_{k=1}^n {\mathbb E} \Big(e^{\lambda \big| G_k\big|}\Big) \leq n \, 2\,   e^{\frac{\lambda^2 }{2}}.
 \end{align*} 
 Taking logarithms, we obtain 
 \[ {\mathbb E}\Big( \max_{1\leq k\leq n} |G_k | \Big) \leq  \frac{1}{\lambda} \ln\Big(2\,  n\,  e^{\frac{\lambda^2}{2}}\Big)
 = \frac{\ln(2\, n)}{\lambda}  + \frac{\lambda}{2}, 
 \]
for every $\lambda >0$.  Choosing $\lambda = \sqrt{2 \ln(2n)}$  concludes the proof of \eqref{E_max_G}. 

Keeping the real part of  \eqref{upp_H_multi_1}, we obtain 
\begin{align*}
{\mathbb E} \big( H_{\rm dis}[u^M]\big) \leq &\; H_{\rm dis}[u^0] +  \frac{\epsilon}{2} M_{\rm dis}[u^0]   \, M\, \frac{\sqrt{3}}{2}\, 
%\Big( 
\sqrt{2 \ln\big[ 2\, (N+1)\big] } %+ \frac{1}{2\sqrt{\ln (N+1)}}  \Big) 
  \frac{1}{\sqrt{\Delta t} \sqrt{\Delta x}} \\
\leq & \; 
H_{\rm dis}[u^0] + \frac{\epsilon\, \sqrt{3} }{2\, \sqrt{2}}   \frac{1}{\sqrt{\Delta x}} \, \sqrt{  \ln\Big( \frac{2\, L_c}{\Delta x}\Big) }\,  \frac{1}{(\Delta t)^{\frac{3}{2}}} t_M.    \
\end{align*}
This completes the proof of \eqref{E_H_mult}.

To prove \eqref{E_max_H_mult},  keeping the real part of  \eqref{upp_H_multi} and estimating from above $| u^{m+1}_j|^2 - |u^m_j|^2$ by
$| u^{m+1}_j|^2 + |u^m_j|^2$, we get 
$$ 
\max_{0\leq m\leq M} H_{\rm dis}[u^M]= H_{\rm dis}[u^0]  +  \frac{ \epsilon\, \Delta x}{2}   \sum_{m=0}^{M-1} \sum_{j=0}^N   \,  \big( |u^{m+1}_j|^2 + |u^m_j|^2\big)
 \,   |\tilde{f}^{\,m+\frac{1}{2}}_j|,  
$$
and the previous argument concludes the proof. 
\end{proof}

\begin{remark}
Note that  in  \eqref{upp_H_multi} and \eqref{upp_H_multi_1}, the upper bound depends
linearly on $\epsilon$, and for small $\epsilon<< 1$ so does the {\it leading term} of the theoretical estimate  \eqref{E:supH-1}. There is also a very small
dependence on $L_c$, and a more important one on $\Delta x$ and $\Delta t$. We remark that these are just the upper bounds, 
and to get a better idea about the growth and dependence of the energy on the various parameters, we investigate that numerically. 
\end{remark}

\subsubsection{Numerical tracking of discrete mass and energy}

Our analytical results above provide mass conservation and upper bounds on the expected values of energy. We would like to check numerically behavior of these quantities. 
We start with testing the accuracy and efficiency of our schemes, for that 
we consider initial data  $u_0 = A\,Q$, where $A>0$ and $Q$ is the ground state \eqref{E:Q}.  

\begin{figure}[ht]
\includegraphics[width=0.45\textwidth]{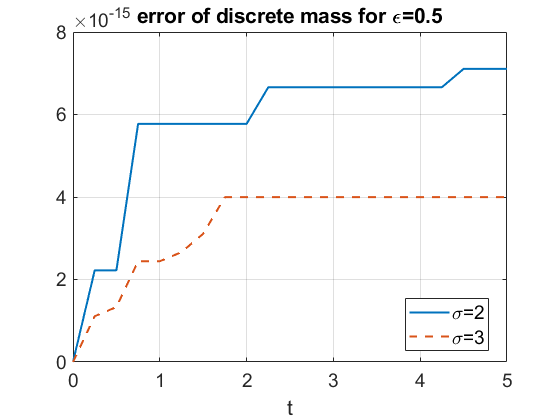}
\caption{Multiplicative noise. The error of the discrete mass computation $\mathcal{E}_1^m$ from \eqref{E:error-Dmass}, $\epsilon=0.5$, in both $L^2$-critical and supercritical cases for one trajectory.}
\label{mass 5p}
\end{figure}

For the first test we take $u_0=0.95 Q$ and $\epsilon=0.5$ in both $L^2$-critical ($\sigma=2$) and $L^2$-supercritical ($\sigma=3$) cases. 
The difference $\mathcal{E}^n_1$ in both cases is shown in Figure \ref{mass 5p}.
Observe that the error is on the order of $10^{-15}$, which is almost at the machine precision ($10^{-16}$).

Since not all of our three schemes conserve the discrete energy exactly (in the deterministic case), we study influence of the multiplicative noise onto the discrete energy 
\eqref{dis-energy}. In Figure \ref{F:E-comparison-multi} we show that in the $L^2$-critical case 
and $\epsilon=0.5$, all three schemes produce the same result for the initial data $u_0=0.9Q$, where the energy is growing and then starts leveling off around the 
time $t=15$. On the right of the same figure  we zoom  on the time interval  $[0,5]$ to see better the difference between the schemes, and we note that the linearized extrapolation (LE) scheme produces slightly lower values of the energy, even if  the overall behavior is the same. 
In our further investigations we usually use the MEC scheme if we need to track the mass and energy, and when we investigate the more 
global features such as blow-up profiles or run a lot of simulations, then we utilize the LE scheme. 

\begin{figure}[ht]
\includegraphics[width=0.45\textwidth]{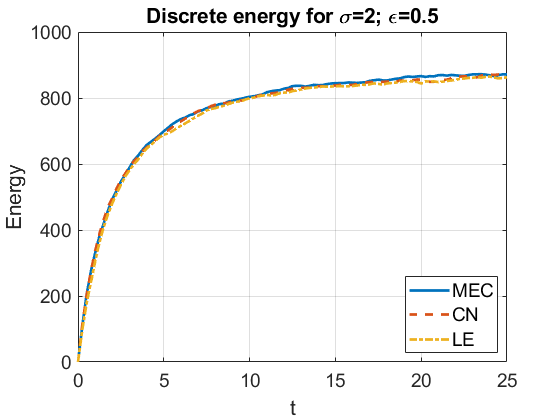}
\includegraphics[width=0.45\textwidth]{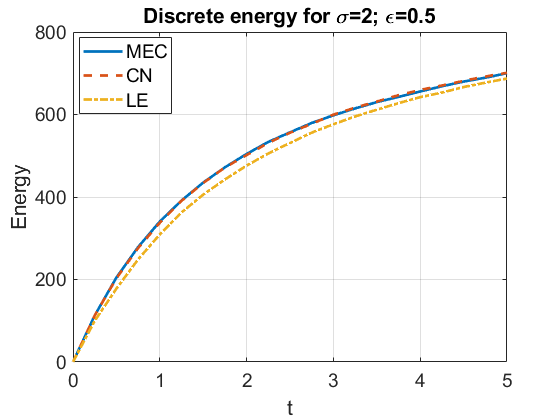}
\caption{Multiplicative noise, $\epsilon=0.5$, $L^2$-critical case. Expected energy (averaged over 100 runs) using different schemes: mass-energy conservative 
(MEC) \eqref{mass-energy}, Crank-Nicholson (CN) \eqref{NS: crank-nicholson} and linearized extrapolation (LE) \eqref{NS:relaxation}. 
Left: time $0<t<25$. Right: zoom-in for time $0<t<5$: note only a small difference with the LE scheme.}
\label{F:E-comparison-multi}
\end{figure}
%%%%%%%%%%%%%%%%%55
%\smallskip

We next study the growth of energy in time and the dependence on various parameters. 
In Figures \ref{F:energy-growth-1}, \ref{F:energy-growth-dx}, \ref{F:energy-growth-dt} we show the time dependence of solutions with initial data of type $u_0 = A\, Q$.

\begin{figure}[ht]
\includegraphics[width=0.45\textwidth]{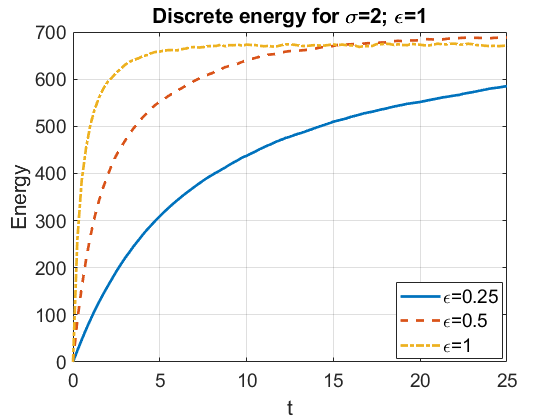}
\includegraphics[width=0.45\textwidth]{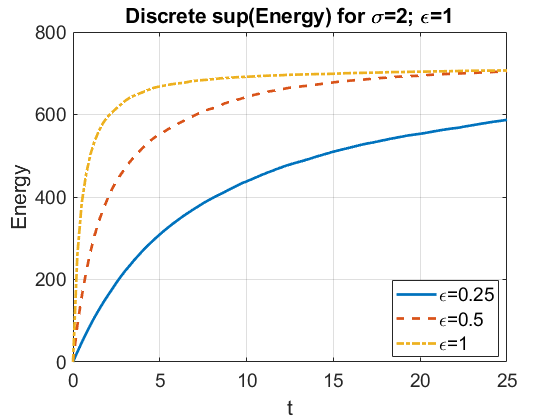}
\includegraphics[width=0.45\textwidth]{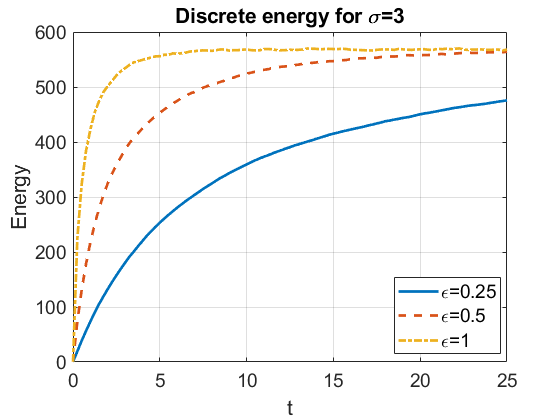}
\includegraphics[width=0.45\textwidth]{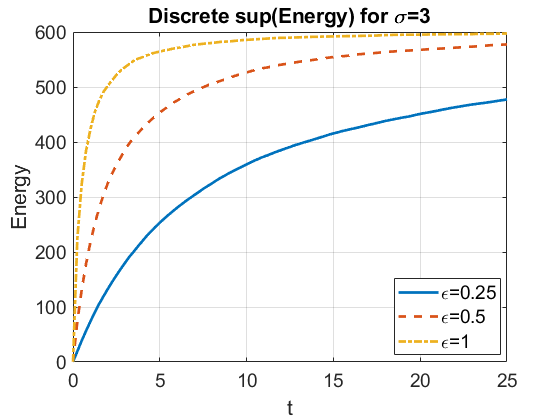}
\caption{Multiplicative noise in the $L^2$-critical case, $\sigma=2$ (top) 
and $L^2$-supercritical case, $\sigma=3$ (bottom); $u_0=0.8Q$, $\Delta x=0.05$, $\Delta t=0.005$, $L_c=20$. Time dependence of $\mathbb E (H(u(t)))$ (left) vs.
 $ \mathbb{E}( \sup_{s\leq t} H(u(s))$ (right) for various  $\epsilon$.}
\label{F:energy-growth-1}
\end{figure}

In Figure \ref{F:energy-growth-1} we track the growth of the expected values of the instantaneous energy (on the left subplots) and of the supremum of energy (on the right subplots). 
To approximate the expected value, we average over 100 runs.
Our simulations show that both start growing linearly at first (see zoom-in Figure \ref{F:zoom-in}), then start slowing down until they peak and level off to some possibly  maximum value. As expected the values of the maximal energy up to some specific time are larger. We observe that the stronger the noise is (i.e., the larger the coefficient $\epsilon$), the shorter it takes for the expected energy to start leveling off. A similar behavior is seen in Figure \ref{F:energy-growth-2} for the gaussian initial data $u_0=A \, e^{-x^2}$ and supergaussian data $u_0=A\, e^{-x^4}$ in both critical and supercritical cases. From now on we only show expectations of instantaneous energy in our figures as plots for the maximal energy  are very similar.   

\begin{figure}[ht]
\includegraphics[width=0.23\textwidth]{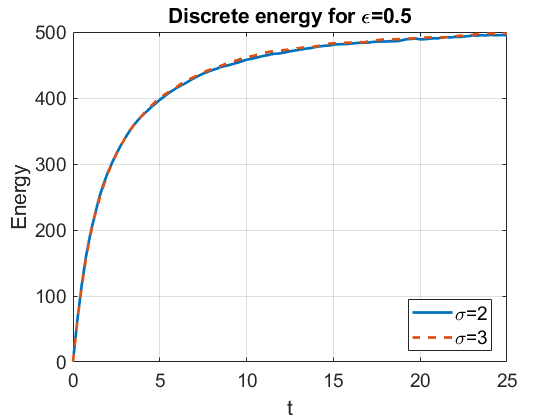}
\includegraphics[width=0.23\textwidth]{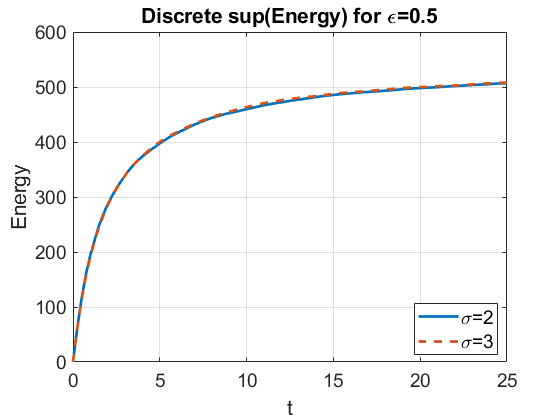}
\includegraphics[width=0.23\textwidth]{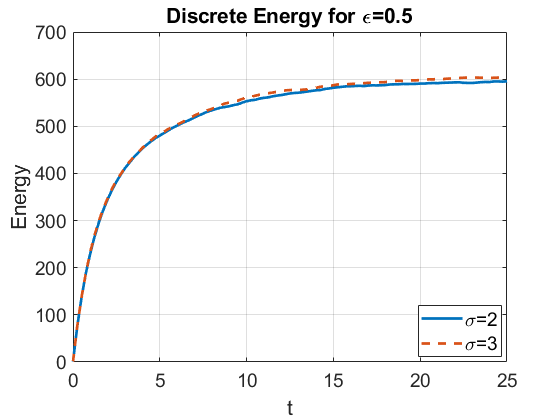}
\includegraphics[width=0.23\textwidth]{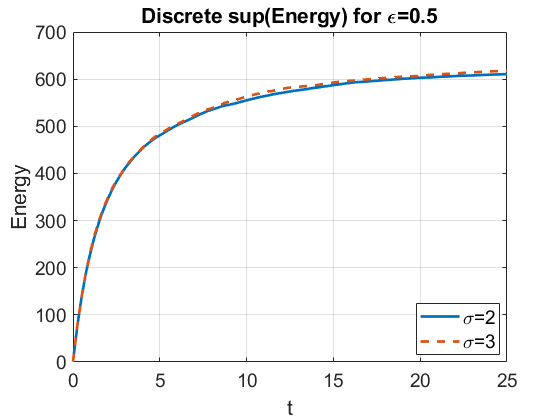}
\caption{Multiplicative noise in both $L^2$-critical and supercritical cases: gaussian (left two) $u_0=e^{-x^2}$ and supergaussian (right two) $u_0=e^{-x^4}$; $\Delta x=0.05$, $\Delta t=0.005$, $L_c=20$. The time dependence of $\mathbb E (H(u(t)))$ (left) vs. $ \mathbb{E}( \sup_{s\leq t} H(u(s))$ (right).}
\label{F:energy-growth-2}
\end{figure}
We next investigate the dependence of the discrete energy \eqref{dis-energy} on computational parameters such as the length of the interval $L_c$, 
the spatial step size $\Delta x$ and the time step $\Delta t$. 
The results are shown in Figure \ref{F:energy-growth-dx} for the expected energy values $\mathbb E(H(u(t)))$
with varying sizes of $\Delta x$ and $L_c$; in Figure \ref{F:energy-growth-dt} 
the dependence on $\Delta t$ is displayed. 

\begin{figure}[ht]
\includegraphics[width=0.45\textwidth]{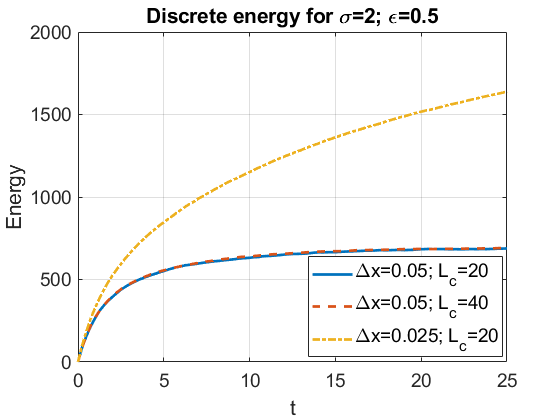}
\includegraphics[width=0.45\textwidth]{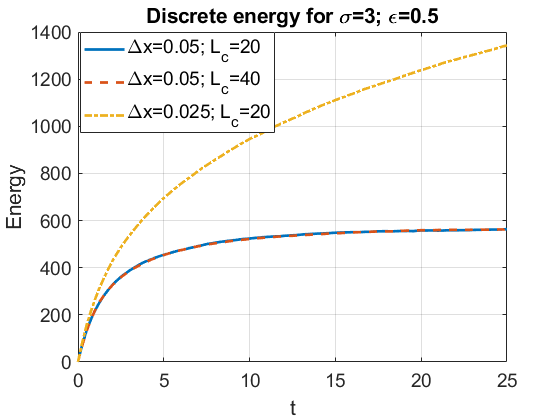}
\caption{Multiplicative noise, $u_0=0.8Q$, $\epsilon=0.5$. The growth of  expected energy depends on $\Delta x$
 but not on  $L_c$ in both $L^2$-critical (left) and supercritical (right) cases.}
\label{F:energy-growth-dx}
\end{figure}

\begin{figure}[ht]
\includegraphics[width=0.45\textwidth]{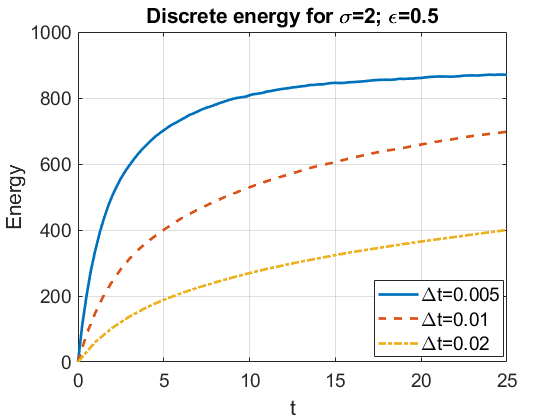}
\includegraphics[width=0.45\textwidth]{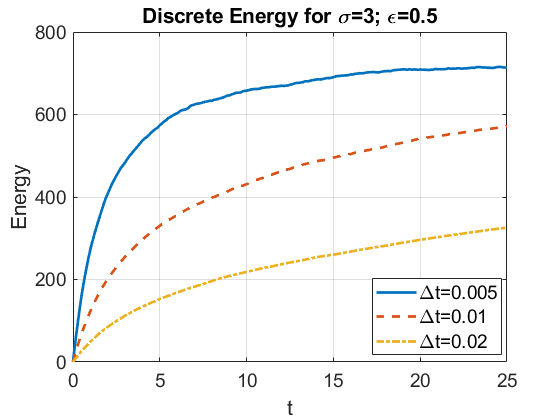}
\caption{Multiplicative noise, $u_0=0.9Q$, $\epsilon=0.5$, $L_c=20$, $\Delta x = 0.05$, $\Delta t=0.005$. The growth of the expected energy for different $\Delta t$ in both the $L^2$-critical and supercritical cases. 
}
\label{F:energy-growth-dt}
\end{figure}

\begin{figure}[ht]
\includegraphics[width=0.45\textwidth]{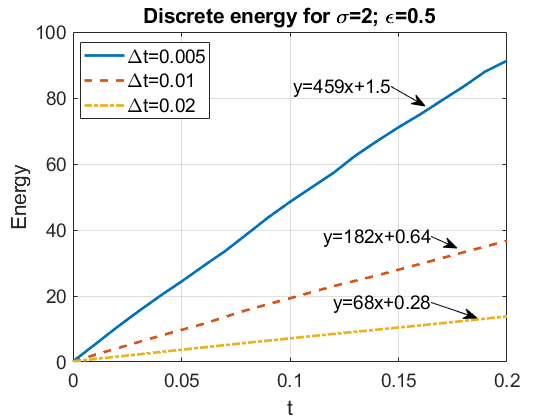}
\includegraphics[width=0.45\textwidth]{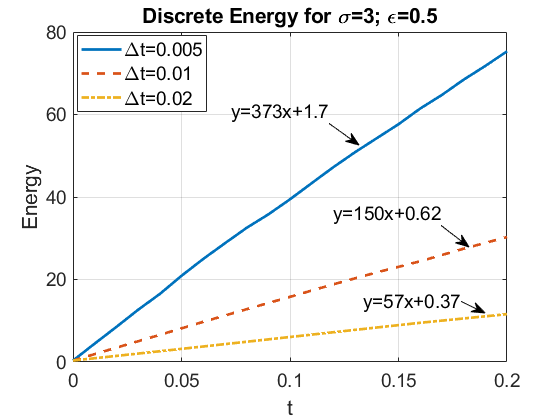}
\caption{Multiplicative noise. Zoom-in for small times (to track linear dependence): $u_0=0.9Q$, $\Delta x=0.05$, $L_c=20$, $\epsilon=0.5$. The time dependence of $\mathbb E (H(u(t)))$ 
for different values of $\Delta t$ in both $L^2$-critical and supercritical cases.}
\label{F:zoom-in}
\end{figure}

We remark that in both critical and supercritical cases, the computed values of expected 
energies (instantaneous and sup) are insensitive to the length of the computational domain $L_c$.  However, there is a dependence on the mesh size $\Delta x$: 
the smaller step size results in a larger value of energy; there is also a dependence on the time step $\Delta t$.

\subsection{Discrete mass and energy for an additive noise}\label{S:3.3}
Our next endeavor is to study the additive stochastic perturbation $f(u)=W(dt,dx)$, or its discretized version in \eqref{E:noise discretization}.
 As in the multiplicative case, we replace the space-time white noise $W$ by its approximation $W_N$ defined in \eqref{WN} in terms of the functions
  $\{e_j \}_{0 \leq j \leq {N}}$ described in \eqref{def_ej}. Then in our numerical schemes \eqref{mass-energy}, \eqref{NS: crank-nicholson} and \eqref{NS:relaxation} 
  the right-hand side is $\Big\{\tilde{f}^{\,m+\frac12}_j \Big\}$ defined in \eqref{tildef} for $j=1, ..., N-1$,  in \eqref{f-hatf0} for $j=0$, and in \eqref{f-hatfN} for $j=N$.
  %, or summarized in \eqref{E:noise discretization}.

We show that for the schemes \eqref{mass-energy}, \eqref{NS: crank-nicholson} and \eqref{NS:relaxation} 
the time evolution of the expected value of the discrete mass on the time interval $[0,T]$  is estimated from above by an affine function $a+bt$. 
We prove that the slope $b$ is a linear function
of the length $L_c$ of the discretization interval $[-L_c,L_c]$. Therefore, our upper bounds on the discrete mass and energy  depend linearly on the total length 
$L_c$ ; they are inversely proportional to the constant time and space mesh sizes $\Delta t$ and $\Delta x$. 
We do not claim that our upper bounds are sharp; this is the first attempt to  upper estimate the discrete quantities. 

\subsubsection{Upper bounds on discrete mass and energy with additive noise}
Recall that the discrete mass of $u^m$ is defined by $M_{\rm dis}[u^m]=\Delta x \sum_{j=0}^N |u^m_j|^2$. 
Let $\tau^*_{\rm dis}$ be the maximal existence time of a discrete scheme. 
\begin{proposition}\label{dis_mass_add}
Let $u^m_j$ be the solution to the scheme \eqref{mass-energy},  
\eqref{NS: crank-nicholson} or \eqref{NS:relaxation} with $\tilde{f}^{m+\frac{1}{2}}_j$ instead of $f^{m+\frac{1}{2}}_j$ for a constant
time mesh $\Delta t$ and space mesh $\Delta x$. 

Then given $T\in (0, \tau^*_{\rm dis})$ and  any element $t_M\leq T$ of the time grid, % and $\Delta t\leq C(T,\alpha)$,  
we have for any $\alpha >0$ 
\begin{align}\label{affine_mass_dis}
{\mathbb E} \big(M_{\mathrm{dis}}[u^M]\big) & \leq   (1+\alpha)  M_{\rm dis}[u^0] + \frac{3\,T\, (1+\alpha)}{4\, \ln(1+\alpha)} \;  \epsilon^2\, \frac{L_c}{\Delta x} \, \frac{t_M}{\Delta t},
\quad \mbox{if} \quad \Delta t\leq \frac{T}{1+\alpha}, \\ 
%\leq (1+\alpha) \Big[ M_{\rm dis}[u^0] + \frac{C(T)}{\alpha} \epsilon^2\, L_c\, t_M\Big], \\
{\mathbb E}\Big( \max_{0\leq m\leq M} M_{\mathrm{dis}}[u^m]\Big) &\leq \Big(1+\alpha + \frac{\alpha}{2T}\Big)  M_{\mathrm{dis}} [u^0] 
+  \frac{3 \, T\, (1+\alpha)^2}{2\, \alpha}  \; \epsilon^2 \, \frac{L_c}{\Delta x} \, \frac{ t_M}{\Delta t} .
\label{affine_max_mass_dis}
%\quad \mathrm{as} \quad \max\limits_{0\leq m\leq M-1} \Delta t_m \to 0.
\end{align} 
\end{proposition}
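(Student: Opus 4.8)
The plan is to convert the scheme into an exact identity for the mass increment and then run a discrete stochastic Gronwall argument; the genuinely new feature compared with the multiplicative case treated in Lemma~\ref{L: numerical scheme} is that the additive forcing produces a nonzero, sign-indefinite increment that must be absorbed. Following the computation in Lemma~\ref{L: numerical scheme}, I would multiply each of the schemes \eqref{mass-energy}, \eqref{NS: crank-nicholson}, \eqref{NS:relaxation} (now carrying the additive forcing $\epsilon\tilde f_j^{m+\frac12}$ on the right) by $\bar u_j^{m+\frac12}(\Delta x_j+\Delta x_{j-1})$, sum in $j$, and take imaginary parts. Because $\mathcal D_2$ is symmetric and all the nonlinear multipliers are real, every deterministic term is real and cancels, leaving --- simultaneously for all three schemes ---
\[ M_{\mathrm{dis}}[u^{m+1}]-M_{\mathrm{dis}}[u^m] = -2\epsilon\,\Delta t\,\Delta x\sum_{j=0}^N \tilde f_j^{m+\frac12}\,\Im\big(u_j^{m+\frac12}\big), \]
with $\tilde f_j^{m+\frac12}$ real and $u_j^{m+\frac12}=\tfrac12(u_j^m+u_j^{m+1})$.

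\emph{Decoupling the forcing.} The difficulty --- and the reason mass is not conserved here --- is that the schemes are implicit, so $u_j^{m+1}$, hence $u_j^{m+\frac12}$, depends on the same Gaussian vector $\chi^{m+\frac12}$ that defines $\tilde f^{m+\frac12}$; the increment is therefore not a martingale difference and its expectation does not vanish. Rather than trying to isolate a clean martingale I would estimate $|\Im(u_j^{m+\frac12})|\le|u_j^{m+\frac12}|$, apply Cauchy--Schwarz in $j$ and Young's inequality with a free parameter $\lambda>0$, and use $\Delta x\sum_j|u_j^{m+\frac12}|^2=M_{\mathrm{dis}}[u^{m+\frac12}]\le\tfrac12(M_{\mathrm{dis}}[u^m]+M_{\mathrm{dis}}[u^{m+1}])$. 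Writing $S_m=M_{\mathrm{dis}}[u^m]$ and $r=\tfrac{\epsilon\Delta t\lambda}{2}$, this produces the absorbing recursion
\[ (1-r)\,S_{m+1}\le(1+r)\,S_m+\frac{\epsilon\Delta t}{\lambda}\,\Delta x\sum_{j=0}^N\big|\tilde f_j^{m+\frac12}\big|^2. \]

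\emph{Gronwall and the affine bound.} Taking expectations and using the explicit variance $\mathbb E\big(\Delta x\sum_j|\tilde f_j^{m+\frac12}|^2\big)=\tfrac{3(N+1)}{4\Delta t}$ together with $N+1\sim\tfrac{2L_c}{\Delta x}$ (read off from \eqref{tildef}, \eqref{f-hatf0}, \eqref{f-hatfN}), the recursion becomes $\mathbb E(S_{m+1})\le\rho\,\mathbb E(S_m)+d$ with $\rho=\tfrac{1+r}{1-r}$. The crucial choice is $\lambda\sim\tfrac{\ln(1+\alpha)}{\epsilon T}$: it makes the forcing carry the prefactor $\epsilon^2$ while rendering $r\sim\tfrac{\Delta t\ln(1+\alpha)}{2T}$ independent of $\epsilon$. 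Iterating gives $\mathbb E(S_M)\le\rho^M S_0+d\sum_{k=0}^{M-1}\rho^k$; I would fix $r$ so that $\tfrac{2r}{1-r}\,\tfrac{T}{\Delta t}=\ln(1+\alpha)$, which via $\ln\rho\le\tfrac{2r}{1-r}$ and $t_M\le T$ forces $\rho^M\le 1+\alpha$, and bound $\sum_{k<M}\rho^k\le M\rho^M\le(1+\alpha)\tfrac{t_M}{\Delta t}$ to keep the forcing linear in $t_M$. The hypothesis $\Delta t\le\tfrac{T}{1+\alpha}$ is exactly what keeps $r$, hence $1-r$, bounded away from $1$, so that all constants stay controlled; substituting the values of $d$, $r$ and $N+1$ then yields \eqref{affine_mass_dis}.

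\emph{The maximal estimate.} For \eqref{affine_max_mass_dis} I would return to the signed identity above and split $\Im(u_j^{m+\frac12})=\tfrac12\Im(u_j^m)+\tfrac12\Im(u_j^{m+1})$: the $u_j^m$-part is a genuine martingale difference against the mean-zero, $\mathcal F_{t_m}$-independent increment $\chi^{m+\frac12}$, so its running maximum is controlled by Doob's $L^2$ maximal inequality through a predictable quadratic variation that is again a multiple of the $S_k$'s, while the $u_j^{m+1}$-part is handled pathwise by the Young device above. Estimating the quadratic-variation term by Cauchy--Schwarz produces a cross contribution of $\sqrt{S_0}\,\sqrt{\max_m S_m}$ type, and a final Young split with parameter $\alpha$ both absorbs $\max_m S_m$ into the left-hand side and generates the extra $\tfrac{\alpha}{2T}S_0$ summand and the factor $\tfrac1\alpha$, while the Gronwall bound already obtained contributes the squared factor $(1+\alpha)^2$. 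The main obstacle, present at every step, is precisely this implicit coupling of $\tilde f^{m+\frac12}$ to the state it multiplies: it rules out a clean It\^o-type mass-drift computation, forces the absorbing Young/Gronwall scheme, and is responsible for the extra factor $1/\Delta t$ that renders the bounds non-sharp.
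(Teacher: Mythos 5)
Your skeleton matches the paper's: the same mass-increment identity obtained by multiplying the scheme by $\bar u_j^{m+\frac12}$, Cauchy--Schwarz plus Young with a free parameter on the stochastic term, the exact Gaussian variance $\Delta x\,\mathbb{E}|\tilde f_j^{m+\frac12}|^2=\tfrac{3}{4\Delta t}$, the count $N+1\le \tfrac{2L_c}{\Delta x}$, and a discrete Gronwall step calibrated so that the growth factor is exactly $1+\alpha$ under $\Delta t\le \tfrac{T}{1+\alpha}$. But there is one concrete divergence that prevents you from reaching the inequality \emph{as stated}. You write that ``rather than trying to isolate a clean martingale'' you estimate the whole increment $\Im(u_j^{m+\frac12})$ pathwise. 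The paper does isolate the martingale: it splits $\bar u^{m+\frac12}_j=\tfrac12(\bar u^m_j+\bar u^{m+1}_j)$, observes that the $u^m$-half pairs with $\tilde f^{m+\frac12}_j$ into an adapted stochastic integral $\int_0^{t_M}\!\int \Im(U)\,W_N(ds,dx)$ of the step process $U$ (see \eqref{mass-diff}), whose expectation vanishes because $\chi^{m+\frac12}$ is independent of ${\mathcal F}_{t_m}$, and pays Young only on the non-adapted $u^{m+1}$-half \eqref{upper_QV1}. Your pathwise treatment spends a full Young quota on \emph{both} halves, so your forcing term carries (at least) twice the paper's noise coefficient, plus the extra $(1-r)^{-1}$ losses from dividing through by $(1-r)$. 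Tracking your own constants: with $\tfrac{r}{1-r}=\tfrac{\Delta t\ln(1+\alpha)}{2T}$ your per-step forcing is $\tfrac{3\epsilon^2 T(N+1)}{4\ln(1+\alpha)(1-r)^2}$, and after summing you land at roughly $\tfrac{3T(1+\alpha)}{2\ln(1+\alpha)}\,\epsilon^2\tfrac{L_c}{\Delta x}\tfrac{t_M}{\Delta t}$ --- a factor $2$ (at best) above the stated coefficient $\tfrac{3T(1+\alpha)}{4\ln(1+\alpha)}$ in \eqref{affine_mass_dis}. Since the proposition fixes these constants for every $\alpha>0$, this is a genuine (though easily repaired) shortfall: restore the adapted/non-adapted splitting and the zero-mean half, and your Gronwall machinery then reproduces the paper's bound.

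For the maximal estimate \eqref{affine_max_mass_dis} you propose the opposite trade: Doob's $L^2$ inequality for the martingale half plus the Young device for the rest. The paper is more elementary --- it bounds \emph{both} halves by absolute values pathwise (estimating $|u^{m+1}|^2-|u^m|^2$ terms via $|u^m|$ and $|u^{m+1}|$ contributions separately, as in \eqref{mass-diff-2}), takes the maximum and expectations of monotone nonnegative sums, absorbs $2\delta t_M\,\mathbb{E}(\max_m M_{\rm dis}[u^m])$ into the left side with $2\delta T=\beta$, $\tfrac1{1-\beta}=1+\alpha$, and never invokes martingale theory. Your Doob route is viable in principle, but as sketched it does not parse: the predictable quadratic variation of the martingale $\sum_m\sum_j \Delta t\,\Delta x\,\Im(u^m_j)\tilde f^{m+\frac12}_j$ is $\tfrac34\Delta t\sum_{m}S_m$, a running sum of all the $S_m$'s, and Cauchy--Schwarz on it yields a $\sqrt{t_M}\,\sqrt{\mathbb{E}\max_m S_m}$-type term, not the ``$\sqrt{S_0}\sqrt{\max_m S_m}$'' cross term you assert; the $\tfrac{\alpha}{2T}S_0$ summand in \eqref{affine_max_mass_dis} actually arises in the paper from the single boundary term $\delta\,\Delta t\,M_{\rm dis}[u^0]$ after absorption, not from any quadratic-variation cross term. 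So the maximal bound as you describe it would again deliver an affine estimate of the right shape, but with different constants and via a step whose bookkeeping needs to be redone.
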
  
\begin{proof}
Recall that $u^{m+\frac{1}{2}}_j= \frac{1}{2} \big( u^m_j + u^{m+1}_j\big)$. Multiply the equation \eqref{mass-energy}  by 
$- 2 i  \Delta t \Delta x \, \bar{u}^{\,m+\frac{1}{2}}_j$, sum on $j$ from $j=0$ to $N$ and then sum on $m$ from $m=0$ to $M-1$. 
Then there exists a real-valued random variable $R(M,N)$ (changing from one line to the next)  such that
\begin{align}\label{mass-diff}
\Delta x \sum_{j=0}^{N} |u^{M}_j|^2 & - \Delta x \sum_{j=0}^{N} |u^{0}_j|^2  =  i R(M,N) \; -2\, \, \epsilon  i \sum_{m=0}^{M-1} \sum_{j=0}^N
\Delta t \, \Delta x \, \frac{\bar{u}^{\, m}_j + \bar{u}^{\, m+1}_j}{2}\, \tilde{f}^{\,m+\frac{1}{2}}_j  \nonumber \\
&= i R(M,N) \, - \, \epsilon \int_0^{t_M} \!\! \int_{\R} \mbox {\rm Im}\, \big( {U}(s,x) \big)\, W_N(ds,dx)  - \epsilon \, \sum_{m=0}^{M-1} \sum_{j=0}^N 
\Delta t \, \Delta x \, \mbox {\rm Im}\, ({u}^{m+1}_j) \, \tilde{f}^{\,m+\frac{1}{2}}_j, \
\end{align}
where $U$ is  the step process defined by ${U}(s,x)=  u^m_j$ on the rectangle $[t_m,t_{m+1})\times [x_{j-\frac{1}{2}},x_{j+\frac{1}{2}})$.
The Cauchy-Schwarz inequality applied to $\sum_m\, \sum_j$, the definition of $\tilde{f}^{\, m+\frac{1}{2}}_j$ in \eqref{f-fhat}, \eqref{f-hatf0} and \eqref{f-hatfN},
and Young's inequality imply that for $\delta >0$ we have
\begin{align}\label{upper_QV1}
\Big|\,  \epsilon \sum_{m=0}^{M-1} \sum_{j=0}^N  \Delta t \, \Delta x \, \mbox {\rm Im}\, ({u}^{m+1}_j) \, \tilde{f}^{\,m+\frac{1}{2}}_j\Big| \leq &\, \epsilon 
\Big\{ \sum_{m=1}^M \sum_{j=0}^N \Delta t\, \Delta x\, |u^m_j|^2 \Big\}^{\frac{1}{2}}
 \Big\{  \sum_{m=0}^{M-1} \sum_{j=0}^N \Delta t\, \Delta x\, |\tilde{f}^{\,m+\frac{1}{2}}_j|^2\Big\}^{\frac{1}{2}} \nonumber \\
 \leq &\, \delta \sum_{m=1}^M \Delta t \, M_{\rm dis}[u^m] + \frac{3\, \epsilon^2}{16\, \delta }  \sum_{m=0}^{M-1}  \sum_{j=0}^N  \frac{3}{4} | \chi^{\,m+\frac{1}{2}}_j|^2,
\end{align} 
where the random variables $\chi^{m+\frac{1}{2}}_j$ are (as before) independent standard Gaussian random variables. 

Keeping the real part in \eqref{mass-diff}, then plugging the above estimate into the \eqref{mass-diff} and taking expected values 
(note that the process $U$ is adapted), we deduce
$$ 
{\mathbb E}\big( M_{\rm dis}[u^M]\big) \leq  M_{\rm dis}[u^0] + \frac{3\, \epsilon^2}{16\, \delta } M (N+1) 
+ \delta \sum_{m=1}^M \Delta t {\mathbb E}\big( M_{\rm dis}[u^m]\big).
$$ 
Given $\beta \in (0,1)$, we suppose that $ \delta\, \Delta t \leq \beta$. Then the discrete version of the Gronwall lemma (see e.g. \cite[Lemma 1]{Ho2009})
implies 
$$
{\mathbb E}\big( M_{\rm dis}[u^M]\big) \leq  \frac{1}{1-\beta} \Big[ M_{\rm dis}[u^0] + \frac{3\, \epsilon^2}{16\, \delta } M (N+1) \Big] e^{\delta (M-1) \Delta t}.
$$
Fix $\alpha \in (0,1)$ and choose $\beta\in (0,1)$ such that $\frac{1}{1-\beta}= \sqrt{1+\alpha}$, and  choose $\delta>0$ such that $e^{\delta T} = \sqrt{1+\alpha}$.
Then $\delta = \frac{\ln(1+\alpha)}{2T}\in \big(  \frac{\alpha}{4T}, \frac{\alpha}{2T}\big)$, and  $\Delta t \leq \frac{T}{1+\alpha} \leq \frac{2T }{(\sqrt{1+\alpha}+1)
\sqrt{1+\alpha}} = \frac{2T}{\alpha} \beta$ implies 
%\frac{T ( \sqrt{1+\alpha} -1)}{\sqrt{1+\alpha}}
%= \frac{T \, \alpha}{  ( \sqrt{1+\alpha} +1) \sqrt{1+\alpha}} \leq \frac{T}{1+\alpha}  $ implies
$\delta\, \Delta t \leq \beta$. 
Furthermore, $M(N+1) \leq \frac{t_M}{\Delta t}\, \frac{2\, L_c}{\Delta x} $, and we deduce  \eqref{affine_mass_dis}.
\smallskip

We next prove \eqref{affine_max_mass_dis}. A similar computation, based on the first upper estimate in \eqref{mass-diff} and on \eqref{upper_QV1}, 
 proves  that for $\delta >0$ we have 
\begin{align}\label{mass-diff-2}
M_{\rm dis}[u^M] = &\, M_{\rm dis}[u^0]   + \epsilon \, \Big| \sum_{m=0}^{M-1} \sum_{j=0}^N 
\Delta t \, \Delta x \, \mbox {\rm Im}\, ({u}^{m}_j) \, \tilde{f}^{\,m+\frac{1}{2}}_j \Big| +  \epsilon \, \sum_{m=0}^{M-1} \sum_{j=0}^N 
\Delta t \, \Delta x \, \mbox {\rm Im}\, ({u}^{m+1}_j) \, \tilde{f}^{\,m+\frac{1}{2}}_j, \nonumber \\
\leq &\, M_{\rm dis}[u^0] + \delta  \Delta t \, M_{\rm dis}[u^0]  + 2 \delta \sum_{m=1}^M \Delta t \,  M_{\rm dis}[u^m] 
+ 2 \frac{\epsilon^2}{4\, \delta }  \sum_{m=0}^{M-1}  \sum_{j=0}^N \frac{3}{4}\,  |\chi^{\,m+\frac{1}{2}}_j|^2,
\end{align} 
where the random variables $\chi^{m+\frac{1}{2}}_j$ (as before) are independent standard Gaussian random variables. 
Taking expected values, we deduce for any $\delta >0$ 
$$ 
{\mathbb E}\Big(\max_{1\leq m\leq M}  M_{\rm dis}[u^m]\Big) \leq (1+\delta \Delta t ) M_{\rm dis}[u^0] +
2\, \delta \, t_M\, {\mathbb E}\Big(\max_{1\leq m\leq M}  M_{\rm dis}[u^m]\Big) 
+   \frac{3\, \epsilon^2}{8\, \delta } \,   M\, (N+1). 
$$ 
Given $\beta >0$, choose $\delta >0$ such that $2\, \delta T = \beta$; this yields
$${\mathbb E}\Big(\max_{1\leq m\leq M}  M_{\rm dis}[u^m]\Big) \leq \frac{1}{1-\beta} \Big[ \big( 1+\delta \Delta t\big) M_{\rm dis}[u^0] 
+ \frac{3\, \epsilon^2}{8\, \delta } \,   M\, (N+1)\Big].
$$
Given $\alpha >0$, choose $\beta\in (0,1)$ such that $\frac{1}{1-\beta}=  1+\alpha$; then $\delta = \frac{\alpha}{2T(1+\alpha)}$.  
This concludes the proof of \eqref{affine_max_mass_dis} for the mass-energy conservative  scheme.
\smallskip

A similar argument is applied to the schemes  \eqref{NS: crank-nicholson} and \eqref{NS:relaxation} (with the additive right-hand side); the only difference  is in the real-valued random variable $R(M,N)$, which varies from one scheme to the next, but is not present in the final estimate. 
\end{proof}

\begin{remark} Note that the estimates \eqref{affine_mass_dis} and \eqref{affine_max_mass_dis} of the instantaneous and maximal mass are worse than the discrete analog of \eqref{energy_add} by a factor of $\frac{1}{\Delta t}$. One might try to solve this problem in the proof, changing $ 2  \bar{u}^{\, m+\frac{1}{2}}_j \, \tilde{f}^{\,m+\frac{1}{2}}_j$ into 
$ \big( \bar{u}^{m+1}_j- \bar{u}^m_j\big)  \, \tilde{f}^{\,m+\frac{1}{2}}_j + 2\, \bar{u}^m_j   \, \tilde{f}^{\,m+\frac{1}{2}}_j $, and using again the scheme to deal
with the first term.  This would introduce an extra $\Delta t$ factor. However, if the product of the two stochastic Gaussian variables would give a discrere analog of
the inequality \eqref{energy_add}, the deterministic part of the scheme would still create  terms involving $\bar{u}^{m+1}_j  \, \tilde{f}^{\,m+\frac{1}{2}}_j $. 
The corresponding non-linear ``potential" term would yield the mass to be raised to a large power to enable the use of the discrete Gronwall or Young lemma.
\end{remark}

We next study stability properties of the time evolution of the discrete energy defined by \eqref{dis-energy} for the mass-energy conserving (MEC) scheme \eqref{mass-energy} in the additive case. %{\color{blue} \bf Please check the constants}
\begin{proposition} 
Let $u^n_j$ be the solution to the scheme \eqref{mass-energy} with $\tilde{f}^{\, m+\frac{1}{2}}_j$ in \eqref{E:noise discretization} %instead of $f^{m+\frac{1}{2}}_j$  
for a  constant time mesh $\Delta t$ and  space mesh $\Delta x$. 
Then given $T\in (0, \tau^*_{\rm dis})$ and  any element $t_M\leq T$ of the time grid, % and $\Delta t\leq C(T,\alpha)$,  
we have for any $\alpha >0$ 
\begin{align}\label{E-instant-Hdis_add}
{\mathbb E} \big(H_{\mathrm{dis}}[u^M]\big) & \leq   (1+\alpha)  H_{\rm dis}[u^0] + \frac{3\,T\, (1+\alpha)}{4\, \ln(1+\alpha)} \;  \epsilon^2\, \frac{L_c}{\Delta x} \, \frac{t_M}{(\Delta t)^2},
\quad \mbox{if} \quad \Delta t\leq \frac{T}{1+\alpha}, \\ 
%\leq (1+\alpha) \Big[ M_{\rm dis}[u^0] + \frac{C(T)}{\alpha} \epsilon^2\, L_c\, t_M\Big], \\
{\mathbb E}\Big( \max_{0\leq m\leq M} H_{\mathrm{dis}}[u^m]\Big) &\leq \Big(1+\alpha + \frac{\alpha}{2T}\Big)  H_{\mathrm{dis}} [u^0] 
+  \frac{3 \, T\, (1+\alpha)^2}{2\, \alpha}  \; \epsilon^2 \, \frac{L_c}{\Delta x} \, \frac{ t_M}{(\Delta t)^2} .
\label{EMaxHdis_add}
\end{align} 
\end{proposition}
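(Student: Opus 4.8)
The plan is to follow, almost verbatim, the argument for the discrete mass in Proposition~\ref{dis_mass_add}, now with the energy test function in place of the mass test function, and to use the deterministic energy conservation of the MEC scheme exactly as in Proposition~\ref{max_H_multi}. First I would multiply the scheme \eqref{mass-energy}, with the additive right-hand side $g^{m+\frac12}_j=\epsilon\,\tilde f^{\,m+\frac12}_j$, by $-\Delta x\,(\bar u^{m+1}_j-\bar u^m_j)$, then sum over $j=0,\dots,N$ (with the Neumann pseudo-points $u_{-1}=u_0$, $u_N=u_{N+1}$) and over $m=0,\dots,M-1$. Summation by parts on the $\mathcal D_2$ term together with the special difference-quotient form of the discrete nonlinearity make the real part of the deterministic contribution telescope exactly to $H_{\rm dis}[u^M]-H_{\rm dis}[u^0]$, while the term $i(u^{m+1}_j-u^m_j)/\Delta t$ and all remaining purely imaginary pieces are absorbed into a real random variable $iR(M,N)$. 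This reproduces, in the additive setting, the identity underlying \eqref{upp_H_multi}:
\begin{equation*}
H_{\rm dis}[u^M]=H_{\rm dis}[u^0]+iR(M,N)+\epsilon\,\Delta x\sum_{m=0}^{M-1}\sum_{j=0}^N(\bar u^{m+1}_j-\bar u^m_j)\,\tilde f^{\,m+\frac12}_j.
\end{equation*}

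Taking real parts and using that $\tilde f^{\,m+\frac12}_j$ is real, the stochastic contribution becomes $\epsilon\,\Delta x\sum_{m,j}\big(\Re u^{m+1}_j-\Re u^m_j\big)\tilde f^{\,m+\frac12}_j$. The piece carrying $\Re u^m_j$ has an $\mathcal F_{t_m}$-adapted factor integrated against the increment $\beta_j(t_{m+1})-\beta_j(t_m)$ hidden in $\tilde f^{\,m+\frac12}_j$, so for \eqref{E-instant-Hdis_add} it has zero expectation and drops out (for the maximal bound \eqref{EMaxHdis_add} I would instead keep its absolute value and estimate it by the same Cauchy--Schwarz bound as the other piece, picking up the harmless extra factor exactly as in \eqref{mass-diff-2}). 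The ``correction'' piece $\epsilon\,\Delta x\sum_{m,j}\Re u^{m+1}_j\,\tilde f^{\,m+\frac12}_j$ is then controlled by Cauchy--Schwarz over the double index $(m,j)$ and Young's inequality with a parameter $\delta>0$; inserting $|\tilde f^{\,m+\frac12}_j|^2=\tfrac34(\Delta t\,\Delta x)^{-1}|\chi^{m+\frac12}_j|^2$ from \eqref{f-fhat}--\eqref{f-hatfN} yields a term $\delta\sum_m M_{\rm dis}[u^{m+1}]$ plus a Gaussian remainder $\tfrac{3\epsilon^2}{16\,\delta\,\Delta t}\sum_{m,j}|\chi^{m+\frac12}_j|^2$, in complete parallel with \eqref{upper_QV1}.

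The single structural difference with Proposition~\ref{dis_mass_add}, and the origin of the extra $\tfrac1{\Delta t}$ in \eqref{E-instant-Hdis_add}--\eqref{EMaxHdis_add}, is that the energy test function $-\Delta x(\bar u^{m+1}_j-\bar u^m_j)$ carries no factor $\Delta t$, unlike the mass test function $-2i\,\Delta t\,\Delta x\,\bar u^{m+\frac12}_j$; hence the Gaussian remainder is larger by $\tfrac1{\Delta t}$, and $\mathbb E\big(\sum_{m,j}|\chi^{m+\frac12}_j|^2\big)=M(N+1)\le \tfrac{t_M}{\Delta t}\cdot\tfrac{2L_c}{\Delta x}$ produces the $\tfrac{t_M}{(\Delta t)^2}$ growth. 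To close, I would take expectations, bound $\sum_m\mathbb E\big(M_{\rm dis}[u^{m+1}]\big)$ by the already-proven discrete mass estimate \eqref{affine_mass_dis} (legitimate because the mass recursion is self-contained and does not see the energy), and run the discrete Gronwall lemma of \cite{Ho2009} with the same bookkeeping as in Proposition~\ref{dis_mass_add}: choose $\beta\in(0,1)$ with $\tfrac1{1-\beta}=\sqrt{1+\alpha}$ (resp.\ $1+\alpha$ for the maximal bound), $\delta$ with $e^{\delta T}=\sqrt{1+\alpha}$ (resp.\ $2\delta T=\beta$), and check $\delta\,\Delta t\le\beta$ using the hypothesis $\Delta t\le\tfrac{T}{1+\alpha}$. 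Optimizing in $\delta,\beta,\alpha$ as there reproduces the coefficients $(1+\alpha)$, $1+\alpha+\tfrac{\alpha}{2T}$, $\tfrac{3T(1+\alpha)}{4\ln(1+\alpha)}$ and $\tfrac{3T(1+\alpha)^2}{2\alpha}$; the passage to the maximal estimate \eqref{EMaxHdis_add} uses only the bound $|u^{m+1}_j|^2-|u^m_j|^2\le|u^{m+1}_j|^2+|u^m_j|^2$, exactly as in the step from \eqref{E_H_mult} to \eqref{E_max_H_mult}.

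I expect the main obstacle to be exactly the obstruction recorded in the Remark after Proposition~\ref{dis_mass_add}. It is tempting to re-expand $(\bar u^{m+1}_j-\bar u^m_j)$ through the scheme itself in order to gain a factor $\Delta t$ and thereby recover the better rate $\tfrac{t_M}{\Delta t}$, but the deterministic substitution reintroduces the nonlinear potential term, which forces $M_{\rm dis}$ to appear raised to a power growing with $\sigma$ and breaks the linear Young/Gronwall closure; one is therefore stuck with the lossy direct estimate. A second delicate point is that $H_{\rm dis}$ is not sign-definite, so the quadratic correction cannot be absorbed back into the energy and must instead be dominated using the mass bound of Proposition~\ref{dis_mass_add}; making these two inputs fit together, rather than any individual computation, is where the care is needed.
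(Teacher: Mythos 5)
Your proposal takes essentially the same route as the paper: the paper's own proof consists precisely of multiplying \eqref{mass-energy} by $-(\bar u^{m+1}_j-\bar u^m_j)\,\Delta x$, telescoping via the deterministic energy conservation of the MEC scheme, splitting the stochastic term into an adapted part (vanishing in expectation for the instantaneous bound, kept in absolute value for the maximal one) and a correction carrying $u^{m+1}_j$, and then concluding with a one-line appeal: the last term is ``similar to the last one in \eqref{mass-diff}, except that the factor $\Delta t$ is missing,'' so ``the arguments used to prove Proposition \ref{dis_mass_add} conclude the proof.'' All of this you reproduce explicitly, including the correct accounting of where the extra $\tfrac{1}{\Delta t}$ enters; your balanced Young split (mass term $\delta\sum_m M_{\rm dis}[u^{m+1}]$, Gaussian remainder $\tfrac{3\epsilon^2}{16\delta\,\Delta t}\sum_{m,j}|\chi^{m+\frac12}_j|^2$) is in fact the one that yields the stated $t_M/(\Delta t)^2$ growth.

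The one place you genuinely depart from the paper is the closure, and it is worth recording. You observe, correctly, that after Cauchy--Schwarz/Young the quadratic term is a \emph{mass}, not an energy, and that since $H_{\rm dis}$ is not sign-definite the Gronwall iteration cannot close in $H_{\rm dis}$ alone; you therefore propose to dominate $\delta\sum_m M_{\rm dis}[u^m]$ by the already-proven bound \eqref{affine_mass_dis}. The paper is silent on this point --- its terse appeal to Proposition \ref{dis_mass_add} tacitly runs the mass bookkeeping with $H$ in place of $M$ --- so your remark identifies exactly the step the paper glosses. Be aware, though, that your closure does not literally ``reproduce the coefficients'' as you claim: substituting \eqref{affine_mass_dis} leaves additive terms proportional to $M_{\rm dis}[u^0]\, t_M/\Delta t$ and to $\epsilon^2 \tfrac{L_c}{\Delta x}\tfrac{t_M^2}{(\Delta t)^2}$, which are absent from the stated inequalities \eqref{E-instant-Hdis_add}--\eqref{EMaxHdis_add}; what you actually prove is an estimate of the same form and the same $(\Delta t)^{-2}$ order, but with mass-dependent constants. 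This looseness is latent in the paper's own two-line argument rather than a defect specific to your approach, so modulo restating the final constants your proof is a faithful (indeed more careful) version of the paper's.
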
  
\begin{proof}
Multiplying equation \eqref{mass-energy} by
$- (\bar{u}^{m+1}_j - \bar{u}^m_j) \Delta x $, adding for $j=0, ..., N$ and $m=0, ..., M-1$, 
and using the fact that in the deterministic case ($\epsilon =0$) the scheme \eqref{mass-energy} preserves the discrete energy,  we deduce the existence of a real-valued random variable $R(M,N)$ (changing from line to line) such that
\begin{align*}
H_{\rm dis}[u^M] = & H_{\rm dis }[u^0] + i R(M,N) -  \epsilon  \Delta x \sum_{m=0}^{M-1} \sum_{j=0}^N  (\bar{u}^{m+1}_j-\bar{u}^m_j) 
 \tilde{f}^{\, m+\frac{1}{2}}_j \\
 =  &H_{\rm dis }[u^0] + i R(M,N) + \,  \epsilon \, \frac{\Delta x}{\Delta t}  \int_0^{t_M} \mbox{\rm Re}\, (u^m_j)\, W_N(ds,dx) 
 -   \epsilon \, \Delta x\, \sum_{m=0}^{M-1} \sum_{j=0}^N \mbox{\rm Re}\, u^{m+1}_j\,  \tilde{f}^{\, m+\frac{1}{2}}_j .
\end{align*} 
Notice that the last term in the above identity is similar to the last one in \eqref{mass-diff}, except that the factor $\Delta t$ is missing. 
Thus, the  arguments used to prove Proposition \ref{dis_mass_add} conclude the proof.
\end{proof}

\subsubsection{Numerical tracking of discrete mass and energy, additive noise}
As in the multiplicative case, we start with testing the accuracy of our three numerical schemes \eqref{mass-energy}, \eqref{NS: crank-nicholson} and 
\eqref{NS:relaxation} with the additive forcing \eqref{E:noise discretization} on the right hand side and  using the initial data $u_0=A\,Q$.
 In Figure \ref{F:E-comparison-add} we show the comparison of three schemes for the initial condition $u_0=0.9Q$ with the strength of the noise $\epsilon = 0.05$ in the $L^2$-critical case. We see that for both discrete mass and energy the schemes behave similarly with very little variation from one to another. 
 
\begin{figure}[ht]
\includegraphics[width=0.45\textwidth]{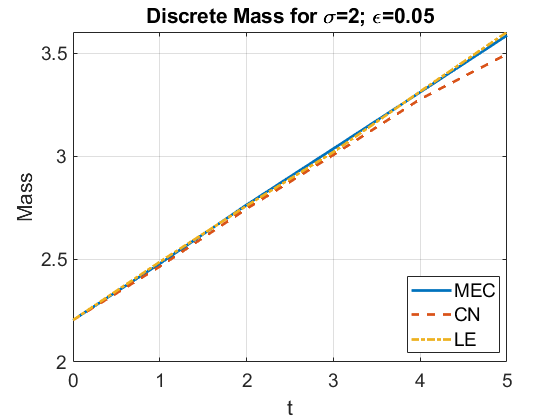}
\includegraphics[width=0.45\textwidth]{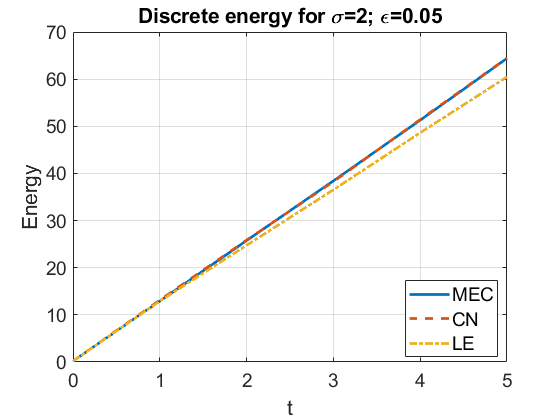}
\caption{Additive noise, $\epsilon=0.05$, $L^2$-critical case. Time evolution of discrete mass (left) and energy (right) via different schemes:
 mass-energy conservative (MEC) \eqref{mass-energy}, Crank-Nicholson (CN) \eqref{NS: crank-nicholson} and linearized extrapolation (LE) \eqref{NS:relaxation}.}
\label{F:E-comparison-add}
\end{figure}

We first investigate dependence of mass and energy on the strength of the noise $\epsilon$. 
We take the initial condition $u_0=0.9 \, Q$ and set $L_c=20$, considering $x \in [-L_c,L_c]$; we also set $\Delta x=0.05$ and $\Delta t=0.005$.
As before, we do 100 runs to approximate the expectation of either mass or energy.
Recall that the identity \eqref{mass_add} and the inequality \eqref{energy_add} give linear dependence on time and square dependence on the noise strength
 $\epsilon$, similar to that  in our upper estimates for the discrete quantities \eqref{affine_mass_dis}, \eqref{affine_max_mass_dis} (for mass) and 
 \eqref{E-instant-Hdis_add}, \eqref{EMaxHdis_add} (energy). The results %of our numerical study of mass and energy 
are shown in Figure \ref{F:eps-comparison-add}, where we plot the expectation of the instantaneous quantities, $\mathbb E(M(u(t)))$ and $\mathbb E(H(u(t)))$. 
We omit figures for $\mathbb E(\sup_{s \leq t}M(u(s)))$ and $\mathbb E(\sup_{s \leq t}H(u(s)))$, since we get the same behavior as shown in 
Figure \ref{F:eps-comparison-add}, and both discrete upper estimates \eqref{E-instant-Hdis_add} and \eqref{EMaxHdis_add} give similar dependence on
 all  parameters.

\begin{figure}[ht]
\includegraphics[width=0.45\textwidth]{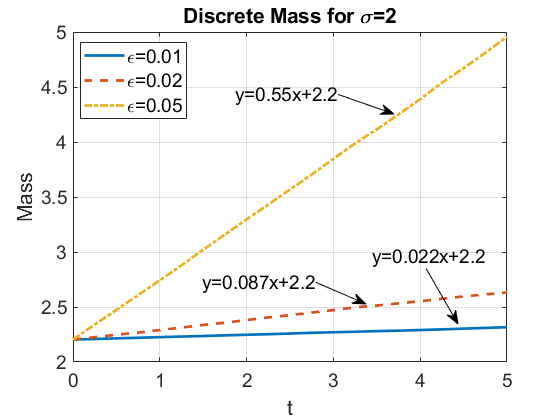}
\includegraphics[width=0.45\textwidth]{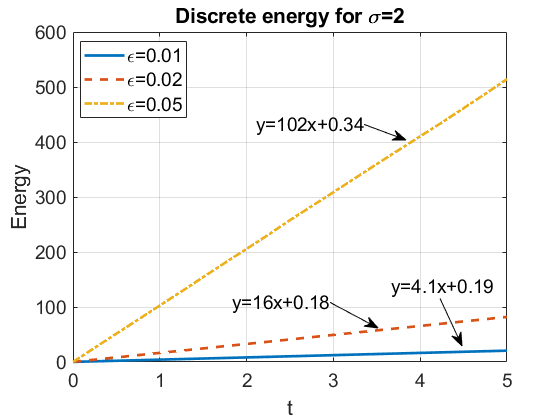}
\includegraphics[width=0.45\textwidth]{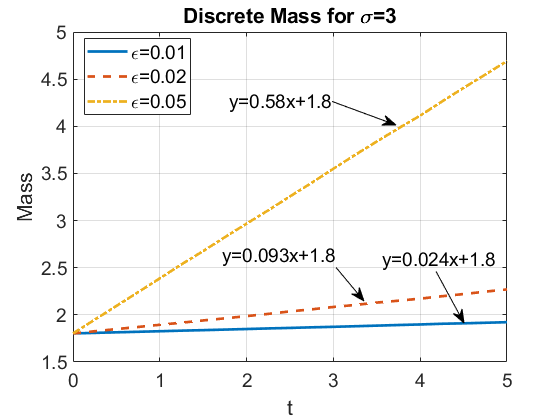}
\includegraphics[width=0.45\textwidth]{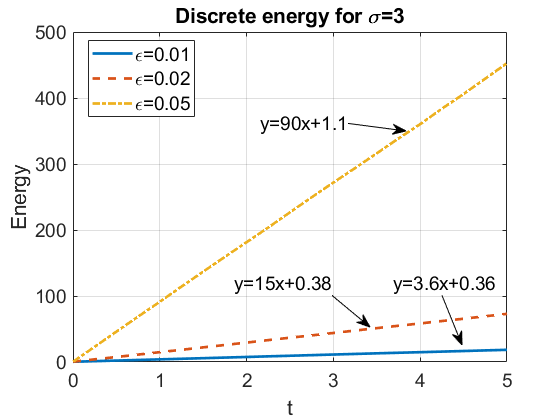}
\caption{Additive noise. Dependence of the expected value of (instantaneous) mass (left) and energy (right) on the strength of the noise $\epsilon$. Top: $L^2$-critical ($\sigma =2$), bottom: $L^2$-supercritical ($\sigma = 3$).}
\label{F:eps-comparison-add}
\end{figure}

Next, we show the dependence of the discrete mass and energy on the length of the computational interval $L_c$ and the step size $\Delta x$. 
We compare the growth of both expected mass and energy for two values of the length $L_c=20$ and $L_c=40$, see Figure \ref{NLS_Lc_add}, 
which shows the linear 
dependence for both expected values of the mass and the energy: the $L^2$-critical case ($\sigma=2$) is shown in the top row, and the $L^2$-supercritical case ($\sigma=3$) is in the bottom row. Note that the slope doubles as we double the length of the computational interval $L_c$. 

\begin{figure}[ht]
\includegraphics[width=0.45\textwidth]{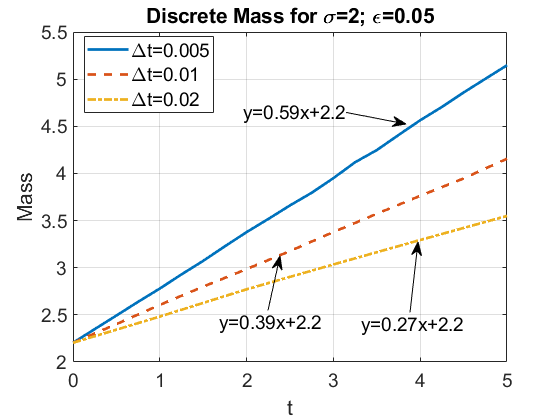}
\includegraphics[width=0.45\textwidth]{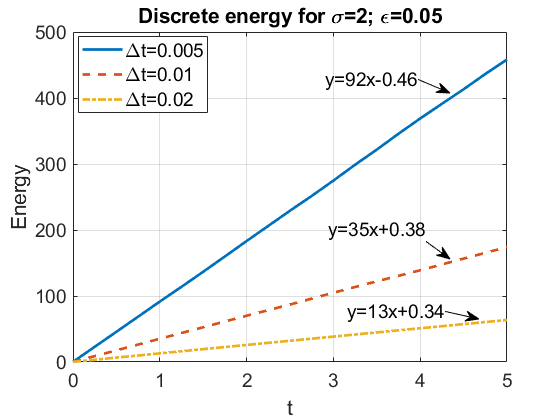}
\caption{Additive noise, $\epsilon=0.05$. Dependence of mass and energy on the time step-size $\Delta t$ in the $L^2$-critical case. }
\label{F:Deltat-add}
\end{figure}

\begin{figure}[ht]
\includegraphics[width=0.45\textwidth]{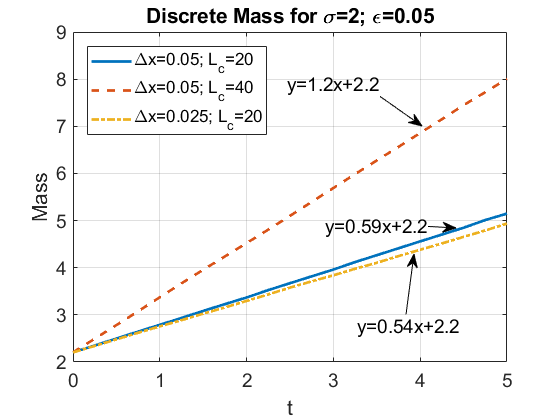}
\includegraphics[width=0.45\textwidth]{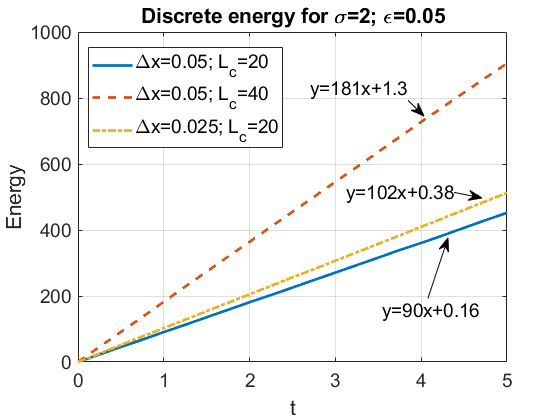}
\includegraphics[width=0.45\textwidth]{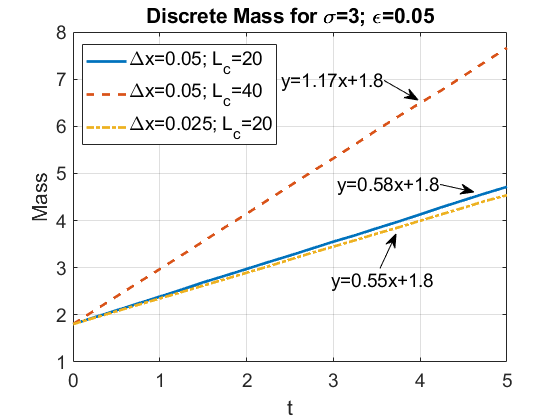}
\includegraphics[width=0.45\textwidth]{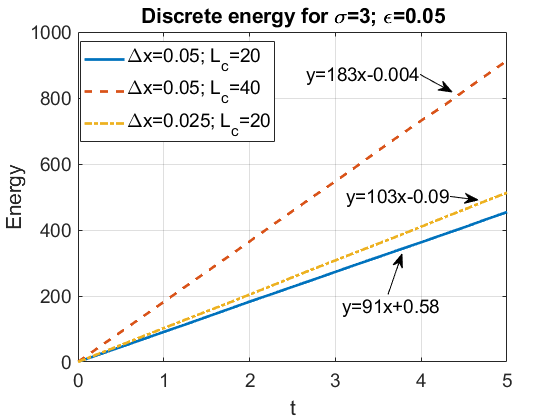}
\caption{Additive noise, $\epsilon=0.05$. Dependence of the expected value of the mass and energy on the length of the interval $L_c$ and space step-size $\Delta x$. Top: $L^2$-critical ($\sigma =2$), bottom: $L^2$-supercritical ($\sigma = 3$).}
\label{NLS_Lc_add}
\end{figure}

The dependence on the time step-size of both discrete mass and energy is shown in Figure \ref{F:Deltat-add}.
We show the dependence in the $L^2$-critical case and omit the supercritical case it is similar. 

We also mention that we studied the growth of mass and energy for other initial data, for example, gaussian $u_0 = A \, e^{-x^2}$, and obtained similar results,
%growth of mass and energy
see Figure \ref{F:gaussian-add}. 
\begin{figure}[ht]
\includegraphics[width=0.43\textwidth]{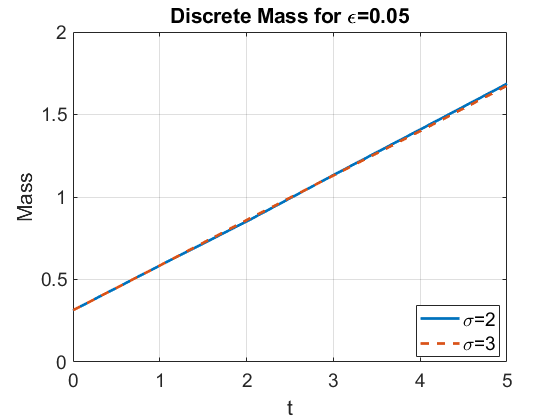}
\includegraphics[width=0.43\textwidth]{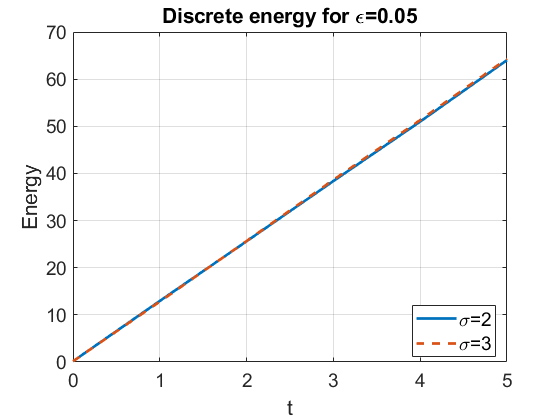}
\caption{Additive noise, $\epsilon=0.05$. Time dependence of the discrete mass (left) and energy (right) for the gaussian initial condition $u_0=0.5 \, e^{-x^2}$. 
(Here, both mass and energy coincide regardless of the nonlinearity, $\sigma = 2$ or $3$, since the only dependence is in the potential part of energy, 
which creates a very small  difference.)}
\label{F:gaussian-add}
\end{figure}
In this section, we investigated how used-to-be conserved quantities (mass and energy) in the deterministic setting behave in the stochastic case with both 
multiplicative and additive approximations of the space-time white noise. Our next goal is to look at a global picture and study how behavior of solutions is 
affected by the noise on a more global scale. We will see that in some cases the noise forces solutions to blow-up and in other instances, 
the noise will prevent blow-up formation (similar investigations were done in \cite{DM2002a} and references therein). 
We confirm some of their findings, and then investigate the blow-up dynamics (rates, profiles, etc). Before we venture into that study, 
we need to refine our numerical method, which we do in the next section.

\section{Numerical approach, refined}\label{S:4}

To study solitons and their stability numerically, it is useful to have a non-uniform mesh to capture well certain spatial features. For that we use  
%To construct a discrete mass preserving scheme, 
a finite difference method with non-uniform mesh.
To study specific details of the evolution (such as formation of blow-up), we implement mesh refinement. However, to keep the algorithm efficient, 
the mesh refinement is applied only at certain time steps, when it is necessary. By a carefully chosen mesh-refinement strategy and a specific interpolation 
during the refinement (which we introduce below), we are able to keep the discrete mass at the same value before and after the mesh-refinement. 
Therefore, the discrete mass is exactly conserved at all times in our time evolution on $[0,T]$ (in the deterministic and multiplicative noise settings). 

We note that in the deterministic theory, solutions either exist globally in time or blow-up in finite time, and there are various results identifying thresholds 
for such a dichotomy. In the probabilistic setting blow up may hold in finite time with some (positive) probability  even for small initial data. 
Indeed in  \cite{dBD2005} it is shown that for a multiplicative stochastic perturbation (driven by non-degenerate noise with a regular enough space
 correlation)  given   any non-null initial data  there is blows up with positive probability. 
Therefore, when we study solutions of SNLS \eqref{E:NLS}, we may refer to the types of solutions as globally existing (a.s.), long-time existing 
(perhaps with some estimates on the time of existence), and blow-up in finite time (with positive probability, or a.s.) solutions. 

We also mention that as an extra bonus for a multiplicative noise, our algorithm has very small fluctuation (on the order of $10^{-12}$) 
in the difference of the actual mass
 \eqref{D: mass}, which is approximated by the composite trapezoid rule;  see \eqref{E: approxi mass}. 
The tiny difference is observed in all scenarios of solutions: globally existing, long-time existing  and blow-up in finite time (the difference is on the order of 
$10^{-12}$), which we demonstrate in Figure \ref{mass 5p}. This suggests that our algorithm is very accurate in all scenarios of solutions.

\subsection{Mesh-refinement strategy}
When a solution starts concentrating or localizing spatially,  in order to increase accuracy, 
it is necessary to put more points into that region. For example, as blow-up starts focusing  towards a singular point as $t \rightarrow T$, 
the singular region will benefit from having more grid points. In this subsection, we discuss the mesh-refinement strategy. 
The idea  comes from the scaling invariance of the NLS equation, or the dynamic rescaling method from \cite{LePSS1987}, \cite{SS1999},
 \cite{YRZ2018} and \cite{YRZ2019}.

At time 0 the computational interval $[-L_c,L_c]$ is discretized into  
$N_0+1$  grid points $\{x^0_0,\cdots,x^0_{N_0} \}$ (which may  be equi-distributed, 
since we typically begin with a uniform space mesh). When we proceed, we check at each time step if the scheme fulfills a tolerance criterion, described below.  
%which we describe below. }

As we mentioned in the introduction, the stable blow-up dynamics for the deterministic NLS consists of the self-similar regime with the rescaled parameters
\begin{align}\label{E: self-similar}
u(t,x)=\frac{1}{L(t)^{1/ \sigma}} v(\tau,\xi), \quad \xi=\frac{x}{L(t)}, \quad \tau=\int_0^t \frac{1}{L^2(s)} ds,
\end{align}
where $v(\xi,\tau)$ is a globally (in $\tau$) defined self-similar solution. 
We do not rescale the equation \eqref{E:NLS} into a new equation as we do not use the dynamic rescaling method due to regularity issues. 
However, we still adopt the rescaling idea for our mesh-refinement algorithm. 
Assume $\xi$ is equi-distributed for all time steps $t_m$ and $\Delta \xi =\xi_{1}-\xi_0$. Thus, we assume that there is a mapping $L(t_m)$, which maps the point $x_j^m \rightarrow \xi_j$.
Using \eqref{E: self-similar} or $L(t)^{1/\sigma} u(t) = v(\tau)$ with our discretization, 
we get 
\begin{align}\label{E: u-v-L}
L(t_m)^{1/ \sigma} \left( u(x^m_j)- u(x^m_{j-1}) \right) = v(\xi_j)-v(\xi_{j-1}),
\end{align}
where both sides are well-behaved (since $v$ is now global), and thus, should have $O(1)$ value (referred to as the {\it moderate} value) for $j=0,1,\cdots, N_m$. 
(The rescaled solution $v(\xi)$ is well-behaved as well). Using the second relation in \eqref{E: self-similar}, we define the discretization of the mapping of $L(t_m)$ 
at each interval $[\xi_{j-1},\xi_{j}]$: 
$$
L^m_j= \frac{x^m_j-x^m_{j-1}}{\Delta \xi}.
$$
Putting this into \eqref{E: u-v-L} and using the fact that $\Delta \xi$ is a constant, we obtain that
$$
C_d:=\lbrace {x^m_j-x^m_{j-1}} \rbrace^{1/ \sigma} \left( u(x^m_j)- u(x^m_{j-1}) \right)
$$ 
remains  {\it moderate}  as time evolves for each $j=1,2,\cdots, N_m$.
% since $L^{1/ \sigma} \left( u(x^m_j)- u(x^m_{j-1}) \right)$ is equivalent to the rescaled equation in \eqref{E: self-similar} $(v_{j+1}-v_j)$, which is well behaved globally. 

Therefore, we set the tolerance to be
\begin{align}\label{Tol1}
M_{tol}^1=Tol_1 \cdot \max_j \{ (\Delta x^0_j)^{1/\sigma} \cdot  |u^0_{j+1}-u^0_j| \},
\end{align}
where $Tol_1$ is the constant we choose at $t=t_0$ (e.g., $Tol_1=2, 2.5$ or $5$). 
This criterion is focused on the size of the quantity  $u^m_{j+1}-u^m_{j}$. 
As the solution reaches higher and higher amplitudes, we refine the grid and  insert  more points, in particular, to avoid the under-resolution issue. 

In a similar way, we set  
\begin{align}\label{Tol2}
M_{tol}^2=Tol_2 \cdot \max_j\{ (\Delta x^0_j)^{1/\sigma} \cdot  |u^0_{j+1}+u^0_j| \} ,
\end{align}
where $Tol_2$ is the constant we choose at the initial time $t=t_0$ (e.g., $Tol_2=0.5$ or $1$).

At each time step $t_m$, we compute the quantities $\gamma_j^m= (\Delta x^m_j)^{1/\sigma} \cdot |u^{m}_{j+1}-u^{m}_j|$ and
 $\eta_j^m= (\Delta x^m_j)^{1/\sigma} \cdot |u^{m}_{j+1}+u^{m}_j|$ on each interval $[x_{j}^m,x_{j+1}^m]$. If at time $t=t_m$ we have
 $\gamma_j^m>M_{tol}^1$,  or $\eta_j^m>M_{tol}^2$ for some $j$'s, we divide the $j$th interval $[x_{j}^m,x_{j+1}^m]$ into two sub-intervals 
 $[x_{j}^m,x_{j+\frac{1}{2}}^m]$ and $[x_{j+\frac{1}{2}}^m,x_{j+1}^m]$. 
 %The question is which value of $u^m_{j+\frac{1}{2}}$ to use and we discuss this in the next subsection. 
Then, the new value $u^m_{j+\frac{1}{2}}$ is needed. We discuss the strategy for obtaining $u^m_{j+\frac{1}{2}}$ with the mass-preserving property 
in the next subsection.
After using  this midpoint refinement, we continue our time evolution to the next time step $t_{m+1}$.

\subsection{Mass-conservative interpolation in the refinement}
Recall that when the tolerance is not satisfied at the $j^{th}$ interval, we refine the mesh by dividing that interval into two sub-intervals,
 and hence, we need an interpolation to find the new value of $u^m_{j+\frac{1}{2}}$ at the point $x^m_{j+\frac{1}{2}}= \frac{1}{2}(x^m_j+x^m_{j+1})$. 

A classical approach is to apply a linear interpolation (as, for example, in \cite{DM2002a}):  
$$
u^m_{j+\frac{1}{2}}=\frac{1}{2}(u^m_j+u^m_{j+1}).
$$  
When we add this middle point, the length of each interval $[x^m_j, x^m_{j+\frac{1}{2}}]$ and 
$[x^m_{j+\frac{1}{2}}, x^m_{j+1}]$ simply becomes $\frac{1}{2} \Delta x^m_j$. 
Unfortunately, this widely used linear interpolation does not conserve the discrete mass. Indeed, let the discrete  mass at the $j^{th}$ interval before the mesh refinement be
\begin{align}\label{E: M before}
M_j= \frac{1}{4}\left[ |u^m_j|^2( \Delta x^m_j+\Delta x^m_{j-1} )
+|u^m_{j+1}|^2( \Delta x^m_{j+1}+\Delta x^m_j ) \right],
\end{align}
and the mass after the mesh refinement be defined as
\begin{align}\label{E: M after}
\tilde{M}_j=\frac{1}{4} \left[ |u^m_j|^2\Big(\frac{1}{2} \Delta x^m_j +\Delta x^m_{j-1} \Big) + |u^m_{j+\frac{1}{2}}|^2 \Delta  x^m_j 
+|u^m_{j+1}  |^2\Big(\frac{1}{2}  \Delta x^m_j +\Delta x^m_{j+1} \Big)  \right].
\end{align} 
Then a simple computation shows that 
\begin{align}\label{E: linear interpo}
M_j - \tilde{M}_j= \frac{1}{4} |u^m_j-u^m_{j+1}|^2 \Delta x^m_j.
\end{align}
Hence, $\tilde{M}_j<M_j$ on some subset of $\Omega$ (where the random variables $u^m_j$ and $u^m_{j+1}$ differ), which is a non-empty set. 
In this linear interpolation, we suffer a loss of mass at each step of the mesh-refinement  procedure. 
In another popular interpolation, via the cubic splines, a similar analysis shows that the scheme suffers the increase of mass 
at each step of the mesh-refinement procedure. To avoid these two problems,  we proceed as follows.

We set the two quantities \eqref{E: M before} and \eqref{E: M after} to be equal to each other, i.e., $M_j=\tilde{M}_j$, by solving this equation with
the fact that %{\color{blue} 
$x^m_{j+1}-x^m_{j+\frac{1}{2}}=x^m_{j+\frac{1}{2}}-x^m_j=\frac12 \Delta x^m_j$, 
we obtain
\begin{align}\label{E: mass interpo}
| u^m_{j+\frac{1}{2}}|^2=\frac{1}{2}\left( |u^m_j|^2+|u^m_{j+1}|^2 \right).
\end{align}
\begin{figure}[ht]
\includegraphics[width=0.5\textwidth]{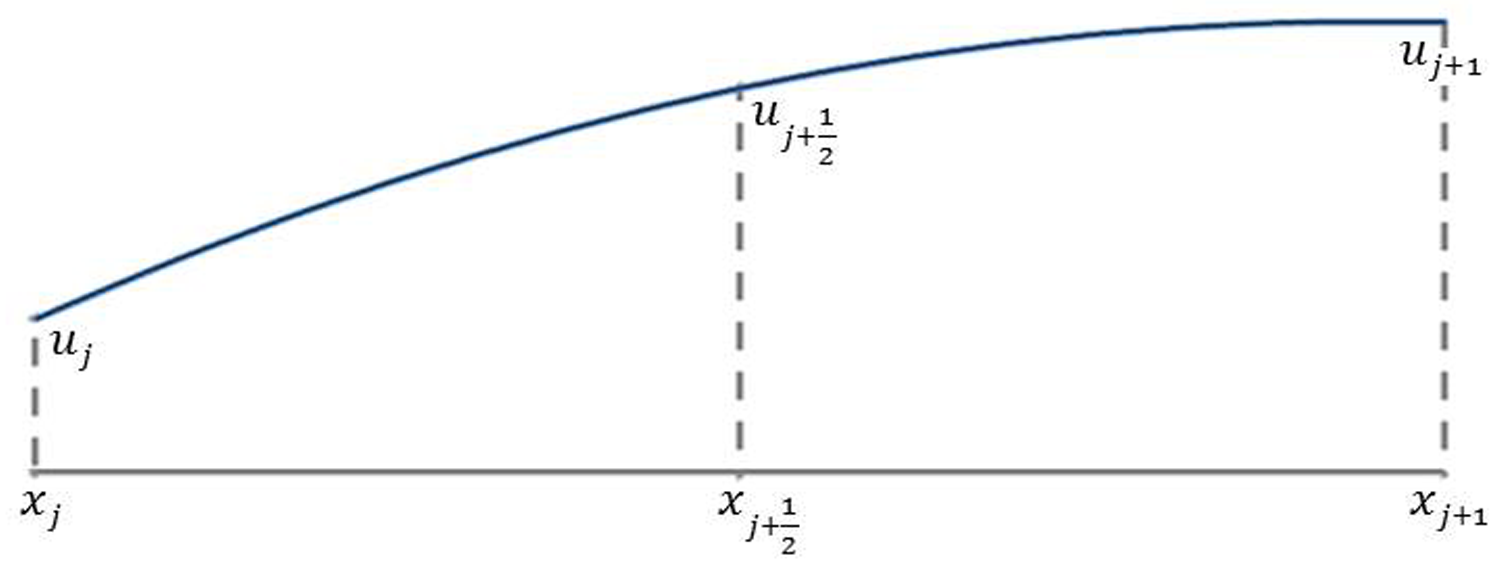}
\caption{Quadratic interpolation in \eqref{E: mass interpo} to obtain $u^m_{j+\frac12}$ (index $m$ is omitted). }
\label{F:I}
\end{figure}

To implement the condition \eqref{E: mass interpo}, one choice is to set %take
\begin{align}\label{E: interpo eqn}
\begin{cases}
\Re (u^m_{j+\frac{1}{2}})=\sqrt{\frac{1}{2}\left[ \Re (u^m_j)^2+ \Re (u^m_{j+1})^2 \right] } \, \mathrm{sgn} \big(\Re (u^m_j)+\Re (u^m_{j+1}) \big), \\
\Im (u^m_{j+\frac{1}{2}})=\sqrt{\frac{1}{2}\left[ \Im  (u^m_j)^2+ \Im  (u^m_{j+1})^2 \right] } \, \mathrm{sgn} \big(\Im  (u^m_j)+\Im  (u^m_{j+1}) \big).
\end{cases}
\end{align}
This is  what we use in our simulations. We next describe  the steps of our  full numerical algorithm.

\subsection{The algorithm}\label{S:algorithm}
The full implementation of our algorithm proceeds as follows:
\begin{itemize}
\item[1.] Discretize the space in the uniform mesh and set up the values of tolerance $Tol_1$ and $Tol_2$.
\item[2.] Apply the mass-conservative numerical schemes \eqref{mass-energy}, \eqref{NS: crank-nicholson} or \eqref{NS:relaxation} for the time evolution from $u^{m-1}$ to reach $u^m$ with the time step size $\Delta t_{m-1}$.
\item[3.] At $t=t_m$, change the time step size by $\Delta t_{m}=\frac{\Delta t_0}{\| u(\cdot,t_m) \|_{\infty}^{2 \sigma}}$ for the next time evolution (thus, $t$ never reaches the blow-up time $T$, in case there is a blow-up).
\item[4.] If the solution meets the tolerance ($Tol_1$ or $Tol_2$) on some intervals $[x_j,x_{j+1}]$, we divide those intervals into two sub-intervals.
\item[5.] Apply the {\it mass-conservative interpolation} \eqref{E: mass interpo} to obtain the value of $u^m_{j+\frac{1}{2}}$.
\item[6.] Continue with the time evolution to $t=t_{m+1}$ by applying \eqref{mass-energy}, \eqref{NS: crank-nicholson} or \eqref{NS:relaxation}. 
\end{itemize}

A few remarks are due. 
First of all, this algorithm is applicable in the deterministic case. 
To our best knowledge, this is the first mesh-refinement numerical algorithm that {\it conserves} the discrete mass exactly before and after the refinement, which is especially important when simulating the finite time blow-up in the 1D focusing nonlinear Schr\"odinger equation with or without stochastic perturbation. Moreover, in the deterministic and multiplicative noise cases the discrete mass is conserved from the initial to terminal times.  
We note that in studying and simulating the blow-up solutions in the (deterministic) NLS equation, the dynamic rescaling or moving mesh methods are used (since solutions
  have some regularity); however, in the stochastic setting, those methods are simply not applicable because noise destroys regularity in the space variable.  

Secondly, its full implementation is needed for solutions that concentrate locally or blow up in finite time, where the refinement and mass-conservation 
are crucial features 
to ensure the reliability of the results. However, the algorithm is also applicable in the cases  where the solution exists globally or long enough for numerical
 simulations. Indeed, if we start with the uniform mesh and remove the steps (1), (3), (4) and (5), it becomes a widely used second order numerical scheme for 
 studying the NLS equation (in both deterministic and stochastic cases) without considering the singular solutions. 

When investigating solutions, which do not form singularities (exist globally in time or on sufficiently long  time interval), the procedures (1), (3), (4) and (5) 
are not necessary and we omit them. When studying the blow-up solutions (in Section \ref{S: blow-up}), 
we incorporate fully all steps in order to obtain  satisfactory results. 
When testing our simulations of blow-up solutions, not only the error of the discrete mass $\mathcal{E}_1^m[M]$ from \eqref{E:error-Dmass} a
is checked, but also the discrepancy of the actual mass, approximated by the composite trapezoid rule at each time step, is checked, that is,
\begin{align}\label{E; error amass}
\mathcal{E}_2^m[M]=\max_m \left\lbrace M_{\mathrm{app}}[u^m] \right\rbrace-\min_m \left\lbrace M_{\mathrm{app}}[u^m] \right\rbrace, ~~\mbox{where}
\end{align}
%where
\begin{align}\label{E: approxi mass}
M_{\mathrm{app}}[u^m]=\frac{1}{2}|u_0^m|^2 \Delta x_0+\sum_{j=1}^{N-2} |u_j^m|  \Delta x_j+\frac{1}{2}|u_N^m|^2 \Delta x_{N-1} .
\end{align}

For this test, we choose $u_0=1.05Q$ and consider only the $L^2$-critical case ($\sigma=2$), comparing $\epsilon=0$ (deterministic case) with $\epsilon=0.1$ (multiplicative noise case). The initial spatial step-size is set to $\Delta x=0.01$, and the initial temporal step-size is set to $\Delta t_0=\Delta x/4$. We take the computational domain to be $[-L_c,L_c]$ with $L_c=5$. Figure \ref{blowup mass 5p} shows the dependence of $\mathcal{E}_1^m$ and $\mathcal{E}_2^m$ on the focusing scaling parameter $L(t)=\frac{1}{\|u(t)\|_{\infty}^{\sigma}}$. 

\begin{figure}[ht]
\includegraphics[width=0.45\textwidth]{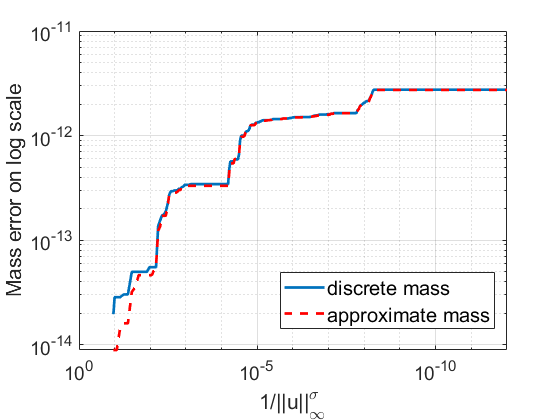}
\includegraphics[width=0.45\textwidth]{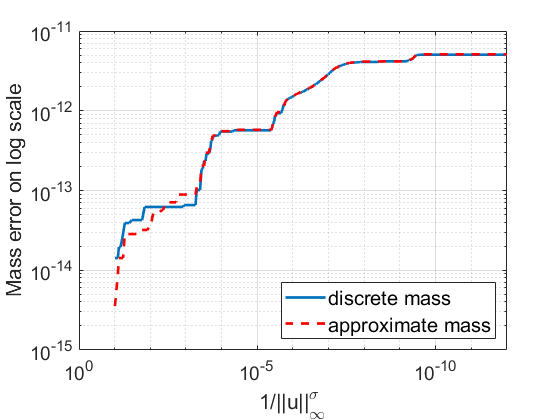}
\caption{The error of discrete and actual masses $\mathcal{E}_1^m$ and $\mathcal{E}_2^m$ for the $L^2$-critical case with or without the multiplicative noise. Left: $\epsilon=0$.  %(determinestic case). 
Right: $\epsilon=0.1$.}
\label{blowup mass 5p}
\end{figure}

Observe  that both the discrete mass and approximation of the actual mass are conserved well even when the focusing parameter reaches $\sim 10^{-12}$. 
Such high precision in mass conservation  justifies well the efficiency of our schemes. 
We also tested other types of initial data (e.g., gaussian data $u_0=A \, e^{-x^2}$ ), different noise strength ($\epsilon=0.2,0.5$)   
and the supercritical power of nonlinearity ($\sigma = 3$); the precision is similar to that shown in Figure \ref{blowup mass 5p}. 

In the next two sections we discuss global behavior of solutions, showing how solitons behave for various nonlinearities (Section \ref{S:solitons}),
and then investigate the formation of blow-up (Section \ref{S: blow-up}) including our findings on profiles, rates and localization.  

\section{Numerical simulations of global behavior of solutions}\label{S:solitons} 

We again consider initial data of type $u_0 = A\, Q$, where $A>0$ and $Q$ is the ground state \eqref{E:Q}.
In the deterministic setting one would consider two cases for numerical simulations, namely, $A<1$ (which guarantees the global existence 
and $A>1$ (which could be used to study blow-up solutions). In the stochastic setting we use similar data;  however, as we will see (in Table \ref{T: blow-up percentage p5}), 
we may not know {\it a priori} if the solution is global or blows up in finite time (a.s. or with some positive probability).
 For example, the condition $A<1$ does not necessarily guarantee global existence, or even sufficiently long (for numerical simulations) time existence 
 as can be seen in 
Tables \ref{T: blow-up percentage p5} and \ref{T: blow-up percentage p7}.

We consider additive noise first. Putting sufficiently large $\epsilon$ and tracking for a sufficiently long time, we observe that small data leads to blow-up for the cases 
 $\sigma=2$ and $\sigma=3$. 
 
\begin{figure}[ht]
\includegraphics[width=1\textwidth, height=9.5cm]{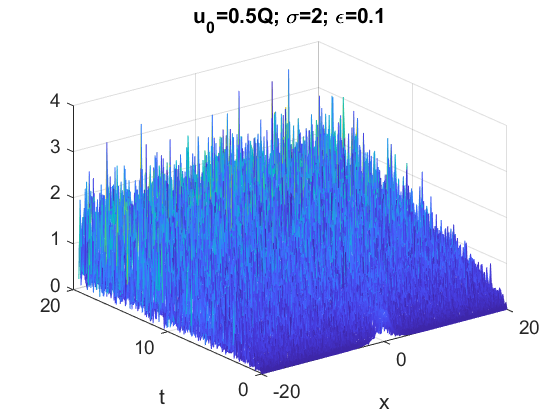}
\includegraphics[width=0.32\textwidth, height=3.5cm]{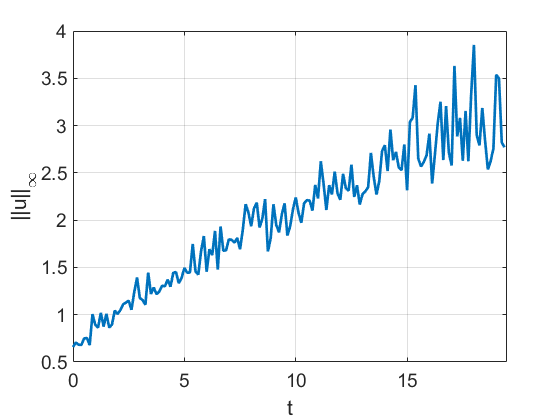}
\includegraphics[width=0.32\textwidth, height=3.5cm]{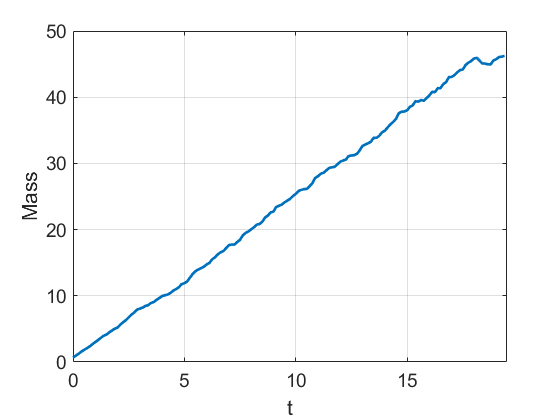}
\includegraphics[width=0.32\textwidth, height=3.5cm]{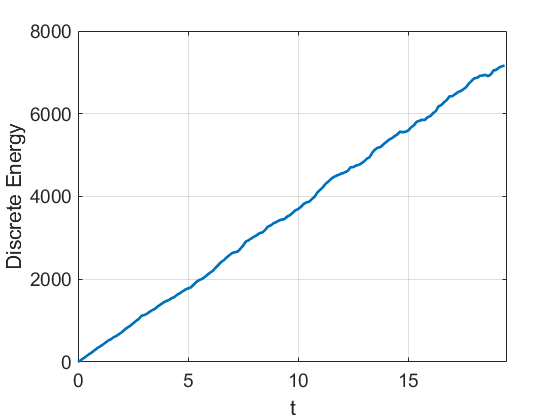}
\caption{Additive noise, $\epsilon=0.1$, $u_0=0.5Q$, $L^2$-critical case. The solution grows in time until the fixed point iteration fails. Bottom: time dependence of $\|u\|_{\infty}$, mass and energy.}
\label{F:p5 05Q prof}
\end{figure}

For example, in Figure \ref{F:p5 05Q prof}, we take $u_0=0.5Q$ (far below the deterministic threshold) with sufficiently strong noise  $\epsilon=0.1$ and run for (computationally) long time: the fixed point iteration for solving the MEC scheme \eqref{mass-energy} fails to converge after 2000  iterations at time $t \approx 19.485$, which indicates that $u^{m+1}$ is far from $u^m$ at $t_m \approx 19.485$. 
 The numerical scheme can not be run any further, and this is typically considered as the indication of the blow-up formation
  (see below comparison with the $L^2$-subcritical case).

Figure \ref{F:p5 05Q prof} shows that the additive noise can {\it create} blow-up in finite time. In other words, %{\color{blue} when subject to an additive forcing, 
the initial data, which in the deterministic case
were to produce a globally existing scattering solution, in the additive forcing case could  evolve towards the blow-up. 
This is partially due to the fact that the additive noise makes the mass and energy grow in time; see the bottom subplots in Figure \ref{F:p5 05Q prof}, 
where both mass and energy grow linearly in time. Note that we start with a single soliton profile with a small amplitude ($0.5\,Q$) and eventually 
the noise destroys the soliton profile with the growing $L^\infty$ norm (left bottom subplot in Figure \ref{F:p5 05Q prof}). 

\begin{figure}[ht]
\includegraphics[width=1\textwidth, height=9.5cm]{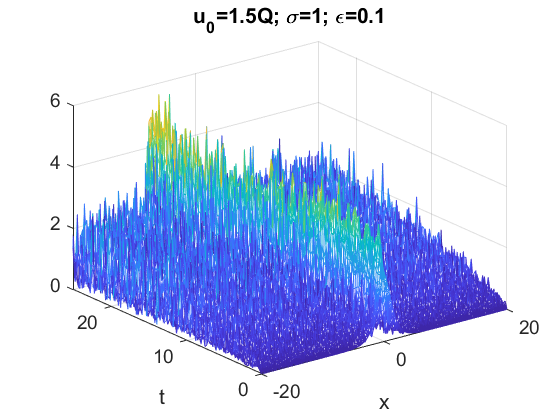}
\includegraphics[width=0.32\textwidth, height=3.5cm]{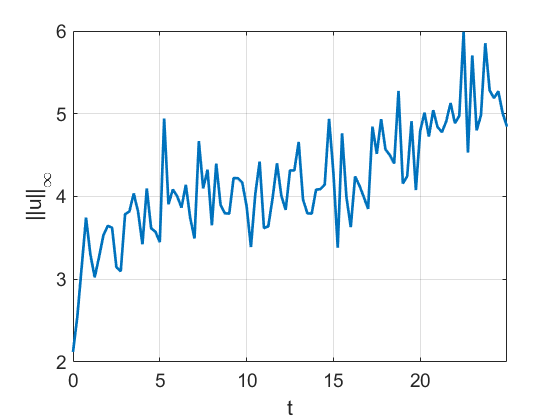}
\includegraphics[width=0.32\textwidth, height=3.5cm]{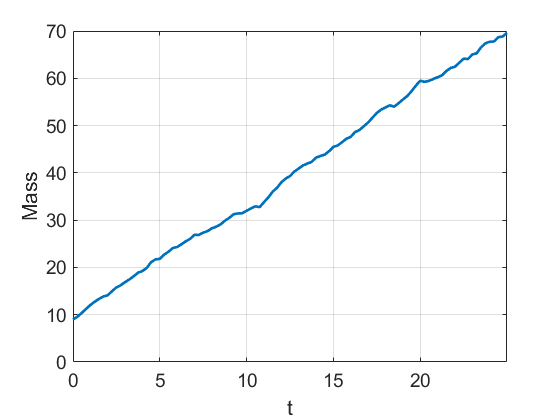}
\includegraphics[width=0.32\textwidth, height=3.5cm]{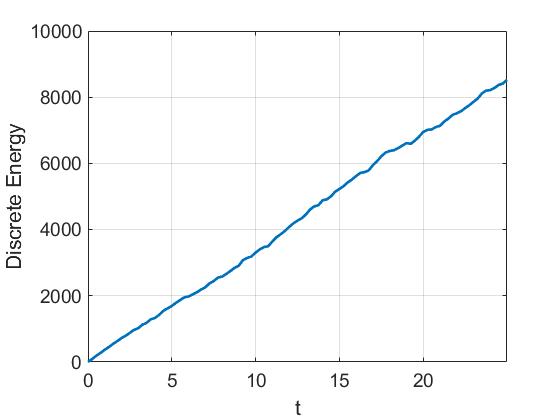}
\caption{Additive noise, $\epsilon=0.1$, $u_0=1.5Q$, $L^2$-subcritical case ($\sigma=1$). 
Top: time evolution of $|u(x,t)|$. Bottom: time dependence of $\|u(t)\|_{\infty}$, mass and energy.}
\label{F:p3 prof-1.5Q}
\end{figure}

It is also interesting to compare this behavior with the $L^2$-subcritical case ($\sigma=1$), where in the deterministic case all solutions are global (there is no blow-up for any data), see \cite{dBD2003}.
Figure \ref{F:p3 prof-1.5Q} shows time evolution of the initial condition $u_0=1.5Q$ with the strength of the additive noise $\epsilon=0.1$ 
(same as in Figure \ref{F:p5 05Q prof}). While the soliton profile is distinct for the time of the computation, it is obviously getting corrupted by noise: 
the $L^\infty$ norm is slowly increasing with some wild oscillations. One can also observe that mass and energy grow linearly to infinity (as $t \rightarrow \infty$);  
see bottom plots of Figure \ref{F:p3 prof-1.5Q}. Note that while there is growth of mass and energy, as well as the $L^\infty$ norm in this subcritical case,
 the fixed point iteration does not fail, indicating that there is no blow-up.

For comparison we also show the influence of smaller noise $\epsilon=0.05$ on a larger time scale ($0<t<50$) for the initial condition $u_0=Q$; 
see Figure \ref{F:p3 prof-1Q-smallnoise}. 
The smaller noise also seem to destroy the soliton with slow increase of the $L^\infty$ norm and linearly growing mass and energy;  however,
  the solution exists globally in time.

\begin{figure}[ht]
\includegraphics[width=1\textwidth, height=9cm]{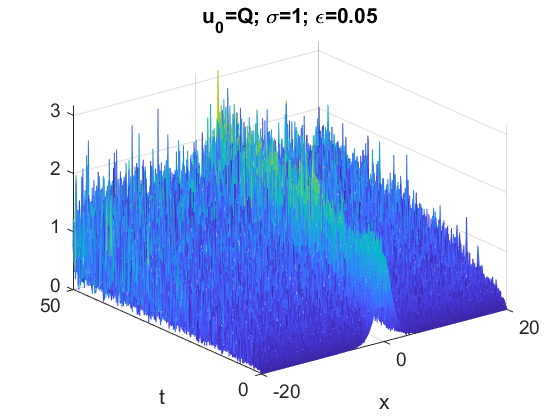}
\includegraphics[width=0.32\textwidth, height=3.5cm]{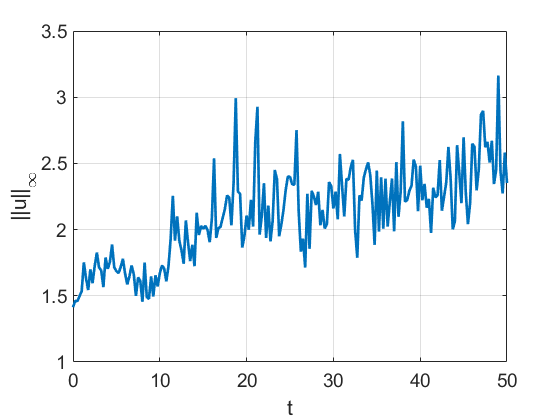}
\includegraphics[width=0.32\textwidth, height=3.5cm]{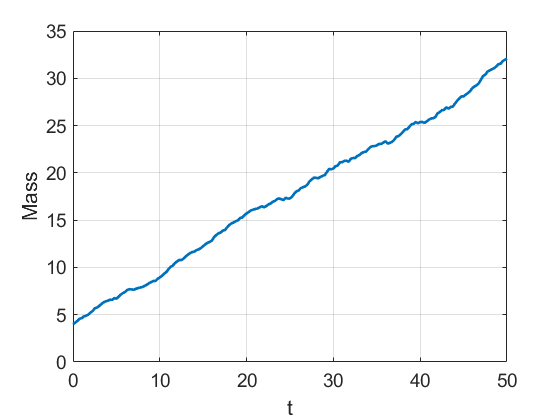}
\includegraphics[width=0.32\textwidth, height=3.5cm]{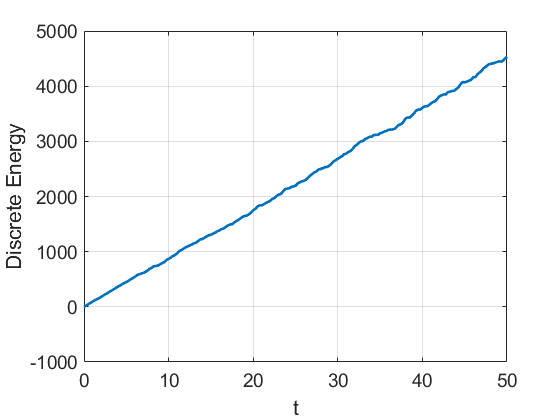}
\caption{Additive noise, $\epsilon=0.1$, $u_0=Q$, $L^2$-subcritical case ($\sigma=1$). Top: time evolution of $|u(x,t)|$. Bottom: time dependence of $\|u(t)\|_{L^\infty}$, mass and energy.}
\label{F:p3 prof-1Q-smallnoise}
\end{figure}

Returning to the $L^2$-critical and supercritical SNLS, we have seen that even small initial data can lead to blow-up. 
Therefore, we next compute the percentage of solutions that blow 
 up until some finite time (e.g., $t=5$). We run $N_t = 1000$ trials to track solutions for various values of magnitude $A$ in the initial data $u_0 = A\, Q$, 
 with $A$ close to $1$. 
In Table \ref{T: blow-up percentage p5} we show the percentage of finite time blow-up solutions in the $L^2$-critical case ($\sigma=2$) with an additive noise
 ($\epsilon = 0.01, 0.05, 0.1$): we take $A = 0.95, 1$ and $1.05$ (in the deterministic case these amplitudes would, respectively, lead to a scattering solution, a soliton, and a finite-time blow-up). Observe that blow-up occurs for $t<5$ even when $A=0.95 < 1$ with strong enough noise (e.g., when 
  $\epsilon=0.1$, we get $98.4\%$ of all solutions blow up in finite time; with $\epsilon =0.05$, we get $2.8\%$ blow-up solutions, 
  see Table \ref{T: blow-up percentage p5}). 
This is in contrast with multiplicative noise as well as with the deterministic case in the $L^2$-critical setting. 
{\small
\begin{table}[ht]
\begin{tabular}{|c|c|c|c|}
\hline
 $A$            & $0.95$ & $1$ & $1.05$   \\
 \hline
 $\epsilon=0.01$& $0$    & $0.34$ & $1$   \\
 \hline
 $\epsilon=0.05$& $0.028$ &$0.926$& $1$    \\
 \hline
 $\epsilon=0.1$ & $0.984$& $0.999$ &$0.999$   \\
 \hline
\end{tabular}
\linebreak
%\linebreak
\caption{Additive noise. Percentage of blow-up solutions with initial data $u_0=A Q$ in the $L^2$-critical case ($\sigma =2$) with $N_t=1000$ trials and running time $0<t<5$.}
\label{T: blow-up percentage p5} 
\end{table}
}

Table \ref{T: blow-up percentage p7} shows the percentage of blow-up solutions in the $L^2$-supercritical case ($\sigma=3$) with additive noise. 
As in  the $L^2$-critical case, solutions with an amplitude below the threshold (e.g., $A=0.95$) can blow up in finite time
(here, before $t=5$)  with an additive noise of larger strength 
(for example, when $\epsilon = 0.05$, $3\%$ of our runs blow up in finite time; for $\epsilon = 0.1$ it is 98.6\%).
{\small
\begin{table}[ht]
\begin{tabular}{|c|c|c|c|}
\hline
 $A$            & $0.95$ & $1$ & $1.05$   \\
 \hline
 $\epsilon=0.01$& $0$    & $0.753$ & $1$   \\
 \hline
 $\epsilon=0.05$& $0.030$ &$0.983$& $1$    \\
 \hline
 $\epsilon=0.1$ & $0.986$& $1$ &$1$   \\
 \hline
\end{tabular}
\linebreak
%\linebreak
\caption{Additive noise. Percentage of blow-up solutions with initial data $u_0=A Q$ in the $L^2$-supercritical case ($\sigma=3$) with $N_t=1000$ trials and running time $0<t<5$.}
\label{T: blow-up percentage p7} 
\end{table}
}
The effect of driving a time evolution into the blow-up regime (or in other words, generating a blow-up in the cases when a deterministic solution would exist 
globally and scatter) might be more obvious in the additive case, since the noise simply adds into the evolution and does not interfere with the solution.
What happens in the multiplicative case, since the noise is being multiplied by the solution, is less obvious. 
Therefore, for completeness we mention the number of blow-up solutions we observe with $A<1$ in the multiplicative case.
 We tested the $L^2$-supercritical case with  $\epsilon=0.1$ for a multiplicative perturbation, and observed the following: for $\sigma =3$, $u_0=0.99\, Q$,
  the number $N_t = 50\, 000$ trial runs produced  
$2$ blow-up trajectories. Thus, while the probability of (specific) finite time blow-up is extremely small (in this case it is 0.004\%), it is nevertheless positive. 
The positive probability of blow-up in the $L^2$-supercritical case is consistent with theoretical results of de Bouard and Debussche \cite{dBD2005}, 
which showed that in such a case any data will lead to blow-up in any given finite time with positive probability.

In the $L^2$-critical case it was shown in \cite[Theorem 2.7]{MR2020} that if $\|u_0\|_{L^2}<\|Q\|_{L^2}$, then in the multiplicative (Stratonovich) noise case, 
the solution $u(t)$ is global, thus, no blow-up occurs. We tested the initial condition $u_0=0.99\, Q$, $\epsilon=0.1$ (same as in the $L^2$-supercritical case), 
and ran again $N_t = 50\, 000$ trials. In all cases we obtained scattering behavior (or no blow-up trajectories), thus, confirming the theory. 

We next show how the blow-up solutions form and their dynamics in both cases of noise. 

\section{Blow-up dynamics}\label{S: blow-up}

In this section, we study the blow-up dynamics and how it is affected by the noise. We continue applying the numerical algorithms introduced in Section \ref{S:3}. 
We start with the $L^2$-critical case and then continue with the $L^2$-supercritical case.  
We first observe that,  as the blow-up starts forming, there is less and less effect of the noise on the blow-up profile,  and almost no effect on the  the blow-up rate. 
However, we do notice that the noise disturbs the {\it location} of the blow-up center for different trial runs.
 
In order to better understand the blow-up behavior (and track profile, rate, location),  we run $1000$ simulations and then average over all runs. 
For the location of the blow-up, we show the distribution of the location of the blow-up center shifts and its dependence on the number of runs. 
When using a very large number of trials, we obtain a normal distribution, see Figures \ref{NLS_5p_center} and \ref{NLS_7p_center}.
For more details on the blow-up dynamics in the deterministic case we refer the reader to \cite{YRZ2018}, \cite{YRZ2019}, \cite{SS1999}, \cite{F2015}. 

\subsection{The $L^2$-critical case} \label{sec-L2critical}
We first consider the quintic NLS ($\sigma=2$) and $\epsilon=0$ (deterministic case), and then $\epsilon =0.01, 0.05$ and  $0.1$ with a multiplicative noise. 
We use generic Gaussian initial data ($u_0 =A e^{-x^2}$) as well as the ground state data ($u_0 = A Q$).   
Figure  \ref{NLS_5p_profile} shows the blow-up dynamics of $u_0 = 3 e^{-x^2}$ with $\epsilon=0.1$. 
Observe that the solution slowly converges to the rescaled ground state profile $Q$. 
 
\begin{figure}[ht]
\includegraphics[width=0.48\textwidth]{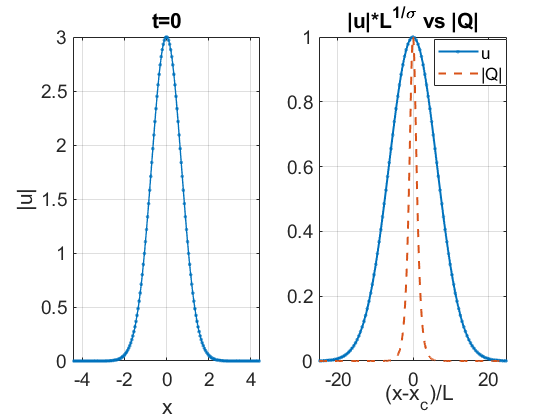}
\includegraphics[width=0.48\textwidth]{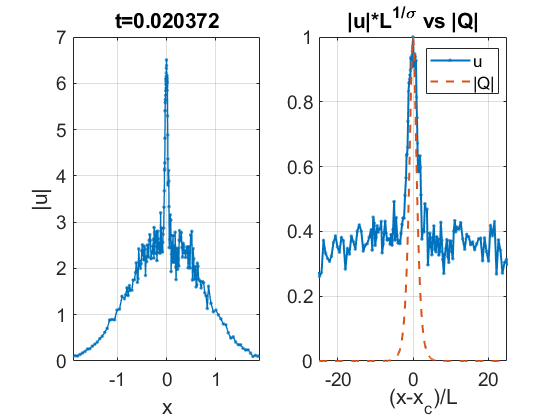}
\includegraphics[width=0.48\textwidth]{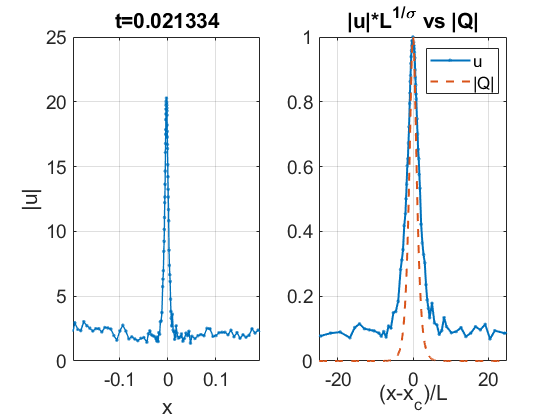}
\includegraphics[width=0.48\textwidth]{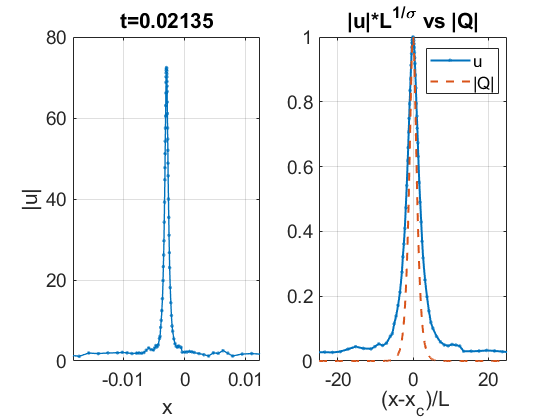}
\caption{Multiplicative noise, $\epsilon=0.1$. Formation of blow-up in the $L^2$-critical case ($\sigma = 2$): snapshots of a blow-up solution 
(given in pairs of actual and rescaled solution) at different times. Each pair of graphs shows in blue the actual solution $|u|$ and its rescaled solution 
$L^{1/\sigma}|u|$, which is compared to the absolute value of the normalized ground state solution $e^{i t}Q$ in dashed red.}
\label{NLS_5p_profile}
\end{figure}

Similar convergence of the profiles for other values of $\epsilon$ is observed (we also tested $\epsilon =0.01$ and $0.05$, 
and compared with our deterministic work $\epsilon=0$ in \cite{YRZ2018}). The last (right bottom) subplot on Figure \ref{NLS_5p_profile} shows that indeed
the profile of blow-up approaches the rescaled $Q$, however, one may notice that it converges slowly (compare this with the supercritical 
case in Figure \ref{NLS_7p_profile}).  This confirms the profile in Conjecture \ref{C:1}.

We next study the rate of the blow-up by checking the dependence of $L(t)$ on $T-t$. 
In Figure \ref{NLS_5p} we show the rate of blow-up on the logarithmic scale. 
Note that the slope in the linear fitting in each case is $\frac12$, thus, confirming the rate in Conjecture \ref{C:1}, 
$\|\nabla u(t)\|_{L^2} \sim \left( T-t \right)^{-\frac{1}{2}}$, 
possibly with some correction terms. This is similar to the deterministic $L^2$-critical case; %where there is a {\it log-log} correction;
see more on that in \cite{SS1999} and \cite{YRZ2018}. 

%\begin{figure}[ht]
%\includegraphics[width=0.45\textwidth]{}%
%\includegraphics[width=0.45\textwidth]{1d5p_rate_1.png}
%\includegraphics[width=0.45\textwidth]{}
%\includegraphics[width=0.45\textwidth]{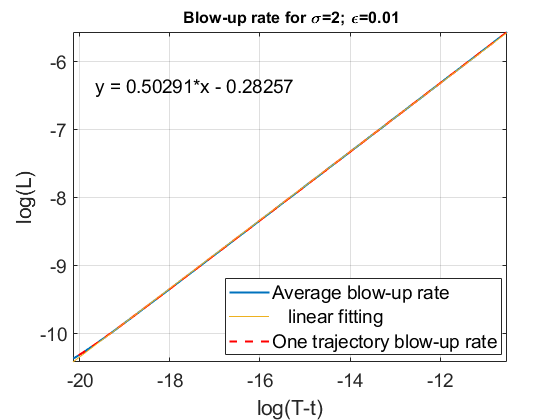}
%\includegraphics[width=0.45\textwidth]{}
%\includegraphics[width=0.45\textwidth]{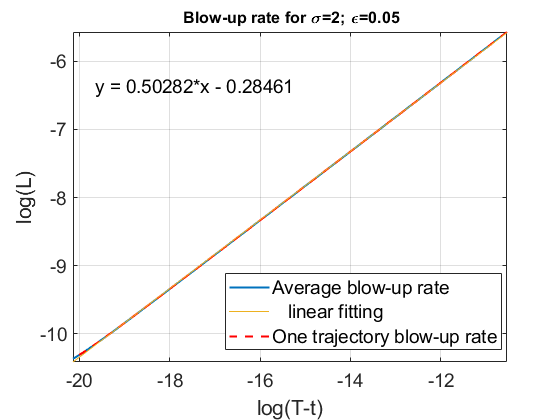}
%\includegraphics[width=0.45\textwidth]{}
%\includegraphics[width=0.45\textwidth]{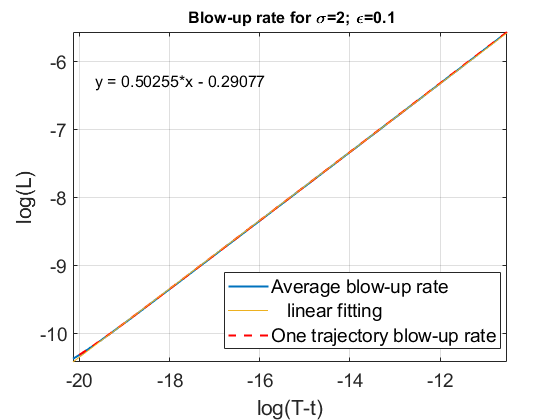}
%\caption {Blow-up profiles (left, middle) and fitting of the rate $\|\nabla u\|_{L^2}$ v.s. $(T-t)$ on a log scale (right). The values of the noise strength $\epsilon$ are $0,0.01,0.05,0.1$ from top to bottom. Observe that the blow-up profile converges slowly to the ground state $Q$.}
%\label{NLS_5p}
%\end{figure}

\begin{figure}[ht]
\includegraphics[width=0.45\textwidth]{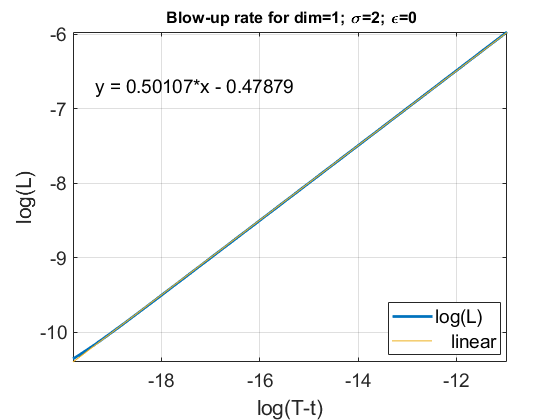}
\includegraphics[width=0.45\textwidth]{SNLS_1d5p_L001.png}
\includegraphics[width=0.45\textwidth]{SNLS_1d5p_L005.png}
\includegraphics[width=0.45\textwidth]{SNLS_1d5p_L01.png}
\caption{Multiplicative noise, $L^2$-critical case. The fitting of the rate $L(t)$ 
%\sim 1/ \|\nabla u (t)\|_{L^2}$ 
v.s. $(T-t)$ on a log scale. The values of the noise strength $\epsilon$ are $0$ (top left), $0.01$ (top right), $0.05$ (bottom left), $0.1$ (bottom right). Observe that in all cases the linear fitting gives the slope $0.50$.}
\label{NLS_5p}
\end{figure}

To provide a justification towards the claim that the correction in the stochastic perturbation case is also of a {\it log-log} type, see \eqref{E:loglog}, 
we track similar quantities as we did in the dynamic rescaling method for the deterministic NLS-type equations; see \cite{YRZ2018}, \cite{YRZ2019}, 
\cite{YRZ2020}.  We track the quantity $a(t) =
-LL_t$, or equivalently, in the the rescaled time $\tau=\int_0^{t} \frac{1}{L(s)^2} ds$ (or $\frac{d\tau}{dt}=\frac{1}{L^2(t)}$), we have $a(\tau)=-\frac{L_{\tau}}{L}$. % by the chain rule.}
In the discrete version, by setting $\Delta \tau =\Delta t_0$, we get $\tau_m=m \cdot \Delta t_0$ as a rescaled time. Consequently, at the $m${th} step we have  
 $L(\tau_m)$, $u(\tau_m)$, and $a(\tau_m)$.
As in \cite{SS1999}, \cite{F2015}, \cite{YRZ2018}, the parameter $a$ %(t)$ {\color{red} (or $a(\tau)$)} 
can be evaluated by setting $L(t)=\left(1/\| \nabla u(t)\|_{L^2}\right)^{\frac{2}{\alpha}}$ with $\alpha=1+\frac{2}{\sigma}=2-2s$, since $s=\frac{1}{2}-\frac{1}{\sigma}$. 
Then, similar to \cite[Chapter 6]{SS1999} we get 
\begin{align}\label{E: a}
a(t) =- \frac{2}{\alpha} \frac{1}{(  \| \nabla u(t)\|_{L^2}^2)^{{\frac{2}{\alpha}+1}} } 
\int |u|^{2\sigma}\Im(u_{xx}\bar{u}) dx.
\end{align}
Here, we specifically write a more general statement in terms of the dimension $d$ and nonlinearity power $\sigma \searrow 2$, since 
the convergence of those parameters down to $d=1$ and $\sigma=2$ is crucial in determining the correction in the blow-up rate (see more in \cite{YRZ2018}),
as well as the value of $a(\tau)$ for the profile identification in the supercritical case.
The integral in \eqref{E: a} is evaluated by the composite trapezoid rule. 

Figure \ref{NLS_5p_a} shows the dependence of the parameter $a$ with respect to $\log L$ for a single trajectory (in dotted red) and for the averaged value 
over 2400 runs (in solid blue) on the left subplot (the strength of the multiplicative noise is $\epsilon=0.1$). 
Observe that a single trajectory gives a dependence with severe oscillations due to noise in the beginning,  but eventually smoothes out and converges to the average value as it approaches the blow-up time $T$. This matches our findings in Figure \ref{NLS_5p_profile}, where eventually the blow-up profile becomes smooth. 
%{\color{red}{Kai: can you run less or more iteration to see oscillations in blue line in Figure 20?} The figure is almost the same for running 2400 times. The oscillations may not due to the noise.}
The right subplot shows the linear fitting for $a(\tau)$ versus $1/\ln(\tau)$. One may notice small oscillations in the blue curve: perhaps with the increase of the number of runs, the blue curve could have smaller and smaller oscillations, and would eventually approach a (yellow) line). We show one trajectory dependence in dotted red, the averaged %(over 1000 runs) 
value in solid blue and the linear fitting in solid yellow. This gives us first confirmation that the correction term  is of logarithmic order. As in the deterministic case, we suspect that the correction is a double logarithm;  however, this will require further investigations, which are highly nontrivial (even in the deterministic case). The above confirms Conjecture \ref{C:1} up to one logarithmic correction. 

\begin{figure}[ht]
\includegraphics[width=0.48\textwidth]{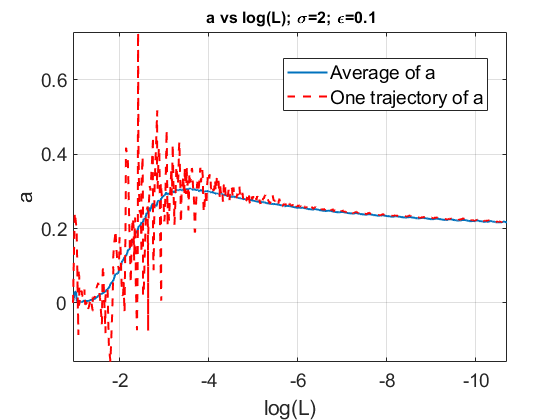}
\includegraphics[width=0.48\textwidth]{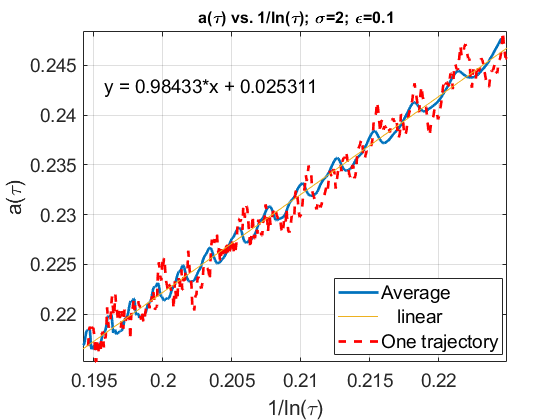}
\caption{Multiplicative noise, $\epsilon=0.1$, $L^2$-critical case. Left: $a$ vs. $\log(L)$. Right: linear fitting for $a(\tau)$ vs. $1/\ln(\tau)$.} 
\label{NLS_5p_a}
\end{figure}

\subsubsection{Blow-up location}
So far we exhibited similarities in the blow-up dynamics between the multiplicative noise case and the deterministic case. 
A feature, which we find different, is the location of blow-up. 
We observe that the blow-up core, to be precise the spatial location $x_c$ of the blow-up {\it center}, shifts away from the zero (or rather wonders around it) 
for different runs. 
We record the values $x_c$ of shifts and plot their distribution in Figure \ref{NLS_5p_center} for various values of $\epsilon$ and for different number of trials 
$N_t$ to track the dependence. Our first observation is that the center shifts further away from zero when the strength of noise  $\epsilon$ increases. 
Secondly, we observe that the shifting has a normal distribution (see the right bottom subplot with the maximal number of trials in Figure \ref{NLS_5p_center}). 
The mean of this distribution approaches $0$ when the number of runs $N_t$ increases.
We record the variance of the shifts for different $\epsilon$'s and $\sigma$'s in Table \ref{T: x_c var}. The variance seems to be an increasing function 
of the strength of the noise, which confirms our first observation above. In the same Table, we also record the $L^2$-supercritical case that is discussed later.   
\begin{figure}[ht]
\includegraphics[width=0.48\textwidth]{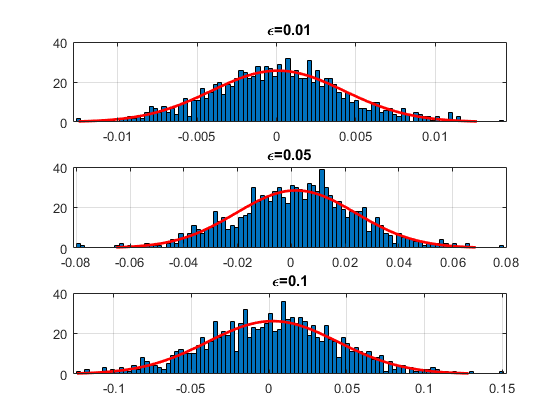}
\includegraphics[width=0.48\textwidth]{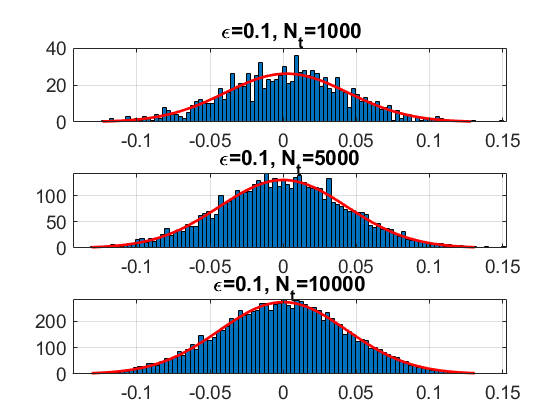}
\caption {Multiplicative noise, $\epsilon=0.1$, $L^2$-critical case. Left: shifts $x_c$ of the blow-up center for different noise strength $\epsilon$ 
with the fixed  $N_t=1000$ number of runs. Right: dependence of shifts on the number of runs $N_t$ for the same $\epsilon=0.1$; 
observe that it approaches the normal distribution as the number of runs increases.}
\label{NLS_5p_center}
\end{figure}
{\small
\begin{table}[ht]
\begin{tabular}{|c|c|c|c|}
\hline
 $\epsilon$         & $0.01$ & $0.05$ & $0.1$   \\
 \hline
 $\sigma=2$& $1.3e-4$    & $7.1e-4$ & $0.0013$   \\
 \hline
 $\sigma=3$& $0.0016$ &$0.0021$& $0.0024$    \\
 \hline
\end{tabular}
\linebreak
\caption{Multiplicative noise. The variance of the blow-up center shifts $x_c$ in $N_t=1000$ trials, see also Figure \ref{NLS_5p_center}. }
\label{T: x_c var} 
\end{table}
}

In the case of an additive noise we obtain analogous  results; for brevity we only include Figure \ref{SNLS_blow_up_add5p} to show convergence of the profiles, 
the other features remain similar and we omit them. 

\begin{figure}[ht]
\includegraphics[width=0.32\textwidth]{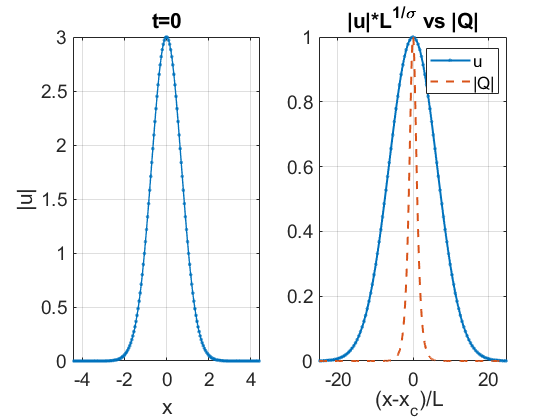}
\includegraphics[width=0.32\textwidth]{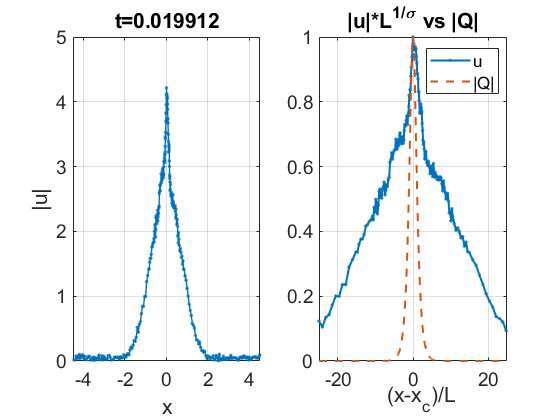}
\includegraphics[width=0.32\textwidth]{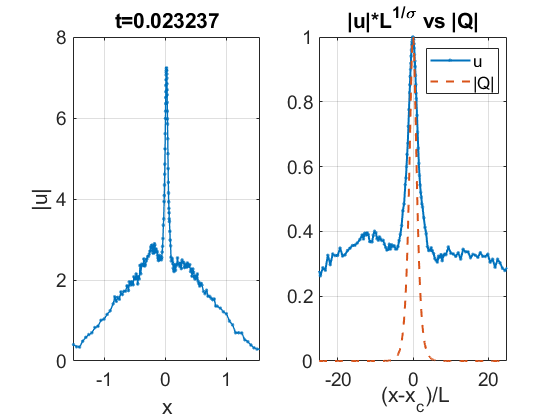}
\includegraphics[width=0.32\textwidth]{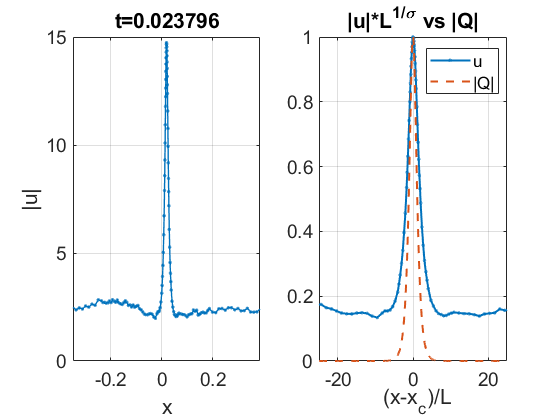}
\includegraphics[width=0.32\textwidth]{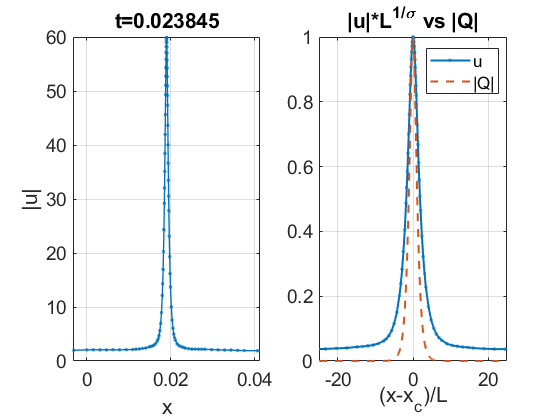}
\includegraphics[width=0.32\textwidth]{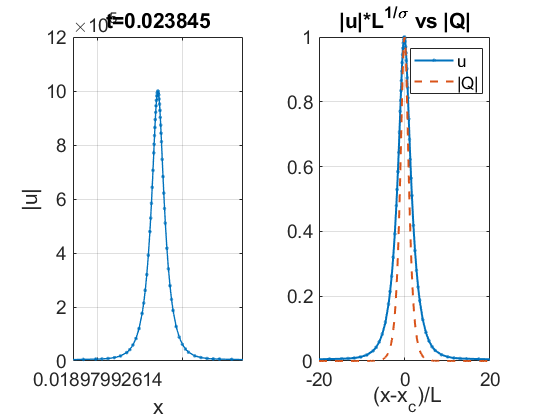}
\caption {Additive noise, $\epsilon=0.1$. 
Formation of blow-up in the $L^2$-critical case ($\sigma=2$): snapshots of a blow-up solution at different times. }
\label{SNLS_blow_up_add5p}
\end{figure}

We conclude that in the $L^2$-critical case, regardless of the type of stochastic perturbation (multiplicative or additive) and the strength 
(different values of $\epsilon$) of the noise, the solution always blows up in a self-similar regime with the rescaled profile of the ground state $Q$ 
and the square root blow-up rate with the logarithmic correction, thus, confirming Conjecture \ref{C:1}.

\subsection{The $L^2$-supercritical case}
In the $L^2$-supercritical case we consider the septic NLS equation ($\sigma=3$) as before with multiplicative or additive noise. We use either Gaussian-type 
initial data $u_0= A \,e^{-x^2}$ or a multiple of the ground state solution $u_0=AQ$, where $Q$ is the ground state solution with $\sigma =3$ in \eqref{E:Q}. 
We consider the multiplicative noise of strength $\epsilon= 0.01, 0.02$ and  $0.1$ and investigate the blow-up profile. For the initial data 
$u_0=3\,e^{-x^2}$ Figure \ref{NLS_7p_profile} shows the solution profiles at different times for $\epsilon=0.1$. The two main observations are: (i) the solution 
smoothes out faster compared to the $L^2$-critical case (see Figure \ref{NLS_5p_profile}); (ii) it converges to a self-similar profile very fast. 
To confirm this we compare the bottom right subplots in both Figure \ref{NLS_5p_profile} and Figure \ref{NLS_7p_profile}: in the supercritical case the profile 
of the rescaled solution (in solid blue) practically coincides with the absolute value of the re-normalized $Q \equiv Q_{1,0}$ (in dashed red);
 this is  similar to the deterministic case.

\begin{figure}[ht]
\includegraphics[width=0.48\textwidth]{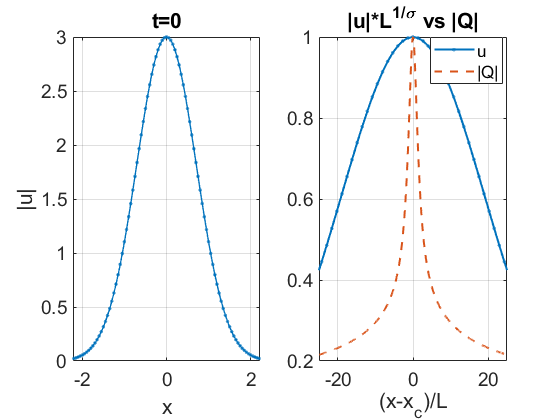}
\includegraphics[width=0.48\textwidth]{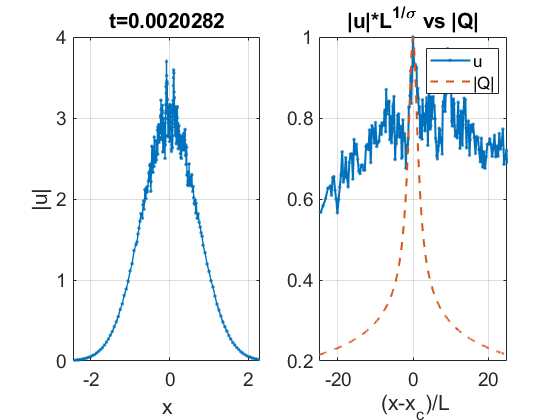}
\includegraphics[width=0.48\textwidth]{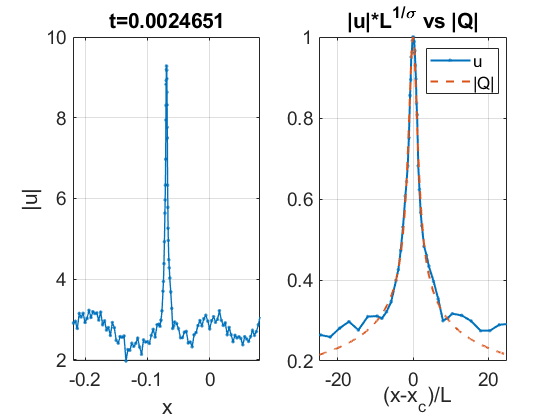}
\includegraphics[width=0.48\textwidth]{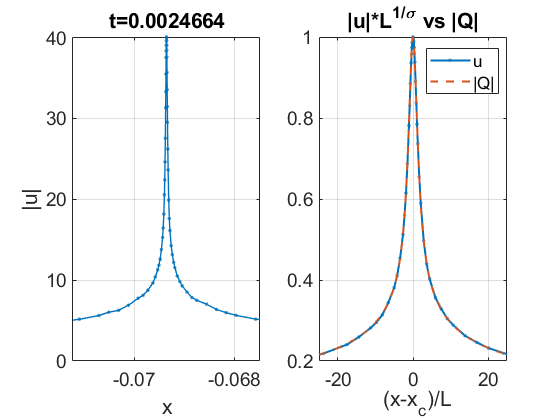}
\caption {Multiplicative noise, $\epsilon=0.1$. Formation of blow-up in the $L^2$-supercritical case ($\sigma=3$): snapshots at different times: 
the actual solution (blue) compared to the rescaled profile $Q_{1,0}$ (red). Note a visibly perfect match in the last right bottom subplot.}
\label{NLS_7p_profile}
\end{figure}

Tests of other data and various values of $\epsilon$ show that
all observed blow-up solutions converge to the profile $Q_{1,0}$.  
In Figure \ref{NLS_7p} we show  the linear fitting for the log dependence of 
$L(t)$ vs. $(T-t)$, which gives the slope $\frac12$. Note that even one trajectory fitting is very good. Further justification of the blow-up rate is done by 
checking the behavior of the quantity $a(\tau)$ from \eqref{E: a}. 
Figure \ref{NLS_7p_a} shows that  the quantity $a(\tau)$ converges to a constant very fast (comparing with the decay to zero of $a(\tau)$ in the $L^2$-critical 
case in Figure \ref{NLS_7p}). Since $a(t) \to a$, a constant, we have 
%now from our numerical simulation, note that 
$a=-LL_t$ and 
%{\color{blue} {\bf Question} Does $t\to T$? The explicit expression for $L(t)$  does not imply %that $L(t)\to \infty$} $L(t) \rightarrow 0$ as $t \rightarrow T$. 
solving this ODE (with $L(T)=0$) yields
$$
L(t)=\sqrt{2a(T-t)}.
$$
Recall that  $L(t)=\left(1/\| \nabla u(t)\|_{L^2}\right)^{\frac{2}{\alpha}}$, or equivalently, $L(t)=1/\|u(t)\|_{\infty}^{\sigma}$, thus, we have the blow-up rate \eqref{E:rate-super} 
for the super-critical case, or equivalently,
$$\|u(t)\|_{\infty}=\left(2a(T-t) \right)^{-\frac{1}{2\sigma}} \,\, \mbox{as} \,\, t\rightarrow T, $$
in the case when we evaluate the $L^{\infty}$ norm.
%}
This indicates that solutions blows up with the pure power rate without any logarithmic correction, similar to the deterministic case (for details see \cite{BCR1999}, \cite{YRZ2019}).

\begin{figure}[ht]
\includegraphics[width=0.45\textwidth]{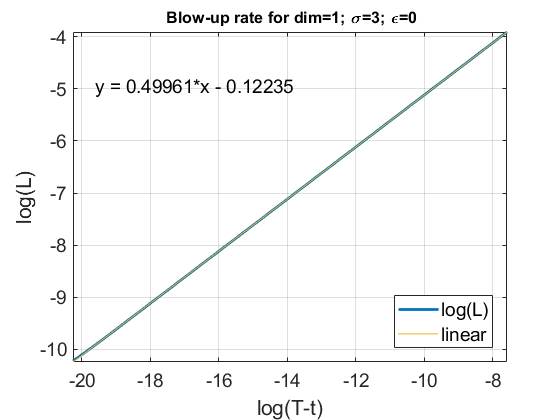}
\includegraphics[width=0.45\textwidth]{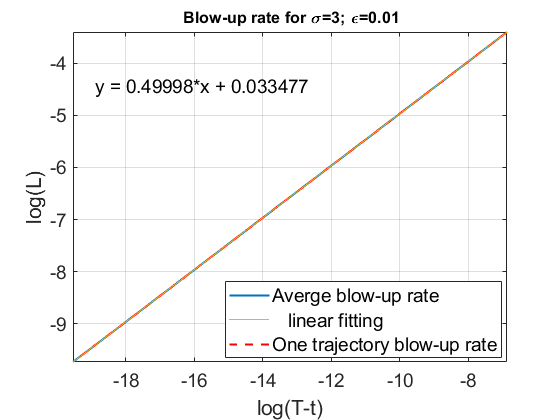}
\includegraphics[width=0.45\textwidth]{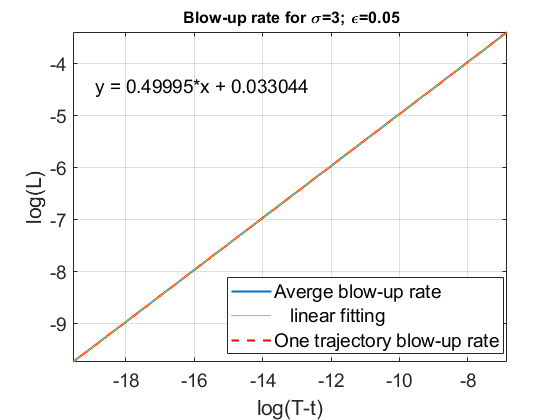}
\includegraphics[width=0.45\textwidth]{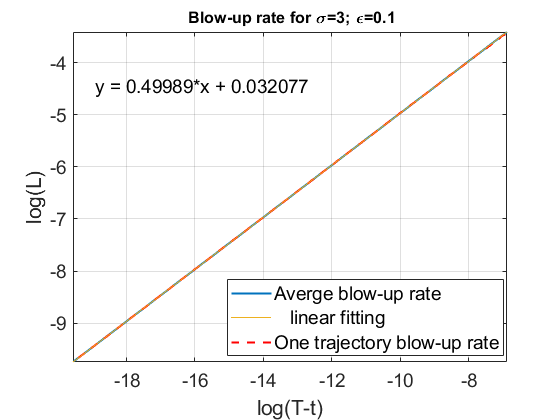}
\caption {Multiplicative noise, $L^2$-supercritical case. A linear fitting of the rate $L(t)$ 
%\sim \|\nabla u\|_{L^2}$ 
v.s. $(T-t)$ on log scale. The values of the noise strength $\epsilon$ are $0$ (top left), $0.01$ (top right), $0.05$ (bottom left), $0.1$ (bottom right); the linear fitting gives 0.50 slope.}
\label{NLS_7p}
\end{figure}

\begin{figure}[ht]
\includegraphics[width=0.48\textwidth]{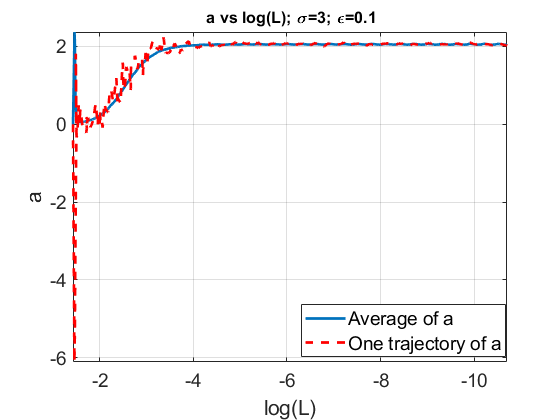}
\includegraphics[width=0.48\textwidth]{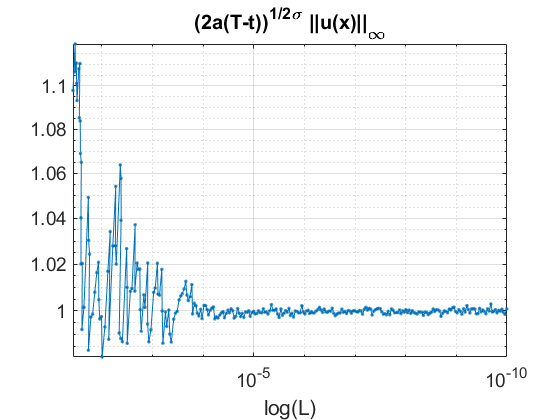}
\caption {Multiplicative noise, $L^2$-supercritical case, $\epsilon=0.1$. Left: $a$ v.s. $\log(L)$, the focusing level. Right: numerical confirmation of the blowup rate $\|u(t)\|_{\infty}=\left(2a(T-t) \right)^{-\frac{1}{2\sigma}}$ (the limit has stabilized at 1).}
\label{NLS_7p_a}
\end{figure}

In the $L^2$-supercritical case 
we also observe shifting of the blow-up center, 
show the distribution of shifts $x_c$ in the multiplicative noise; in particular, these random shifts have a normal distribution similar to the $L^2$-critical case.
The variance of shifts is shown in Table \ref{T: x_c var}.
Note that stronger noises (that is, larger values of $\epsilon$) yield a larger shift away from the origin. Furthermore, comparing Figure \ref{NLS_5p_center} 
with Figure \ref{NLS_7p_center}, we find that the $L^2$-supercritical case produces slightly larger variance of shifts. In other words, we observe that higher power 
of nonlinearity creates   a larger variance, that is the blow-up location is more spread out.

\begin{figure}[ht]
\includegraphics[width=0.48\textwidth]{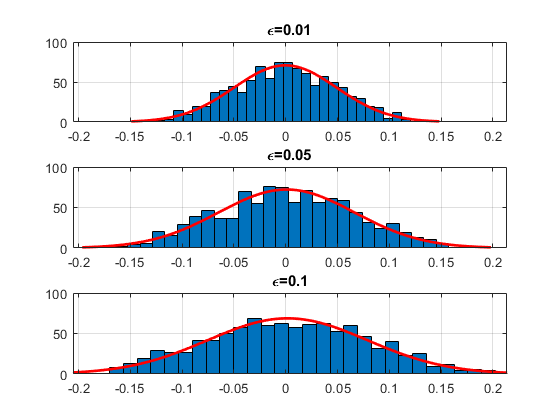}
\includegraphics[width=0.48\textwidth]{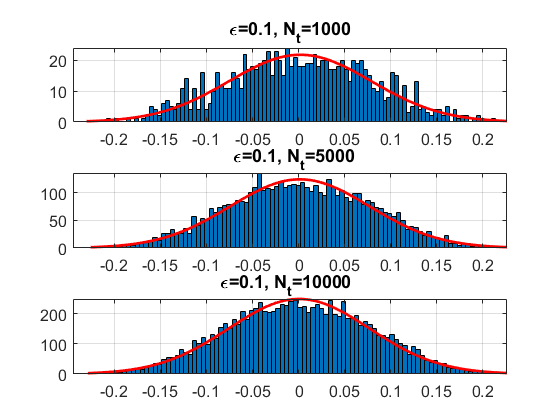}
\caption{Multiplicative noise, $L^2$-supercritical case. Left: distribution of shifts $x_c$ of the blow-up center for different $\epsilon$'s with $N_t=1000$ runs. Right: as $N_t$ increases, it becomes more evident that the spread out of the blow-up location satisfies a normal distribution.}
\label{NLS_7p_center}
\end{figure}

We obtained similar results in the additive noise: the blow up occurs in a self-similar way at the rate $ L(t) =(2a(T-t))^{\frac{1}{2}}$, and the solution profile converges to the profile $Q_{1,0}$ relatively fast, see Figure \ref{SNLS_blow_up_add5p} for profile convergence. The quantities $a(\tau)$, $L(\tau)$ also behave similar to the multiplicative noise parameters (and also to the deterministic cases). 
This confirms Conjecture \ref{C:2}.

\begin{figure}[ht]
\includegraphics[width=0.32\textwidth]{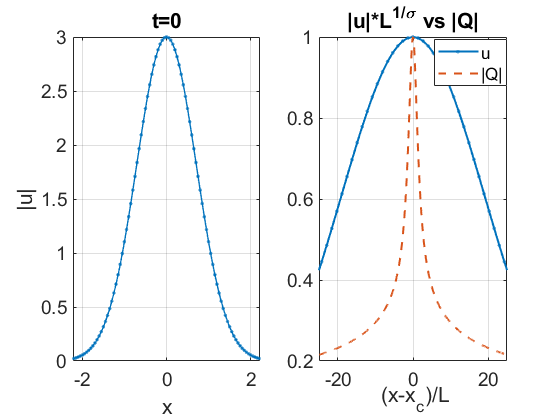}
\includegraphics[width=0.32\textwidth]{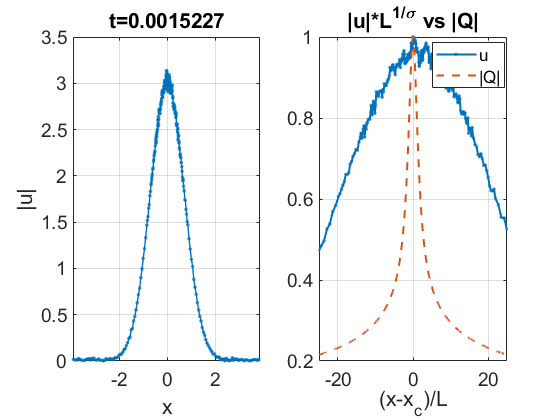}
\includegraphics[width=0.32\textwidth]{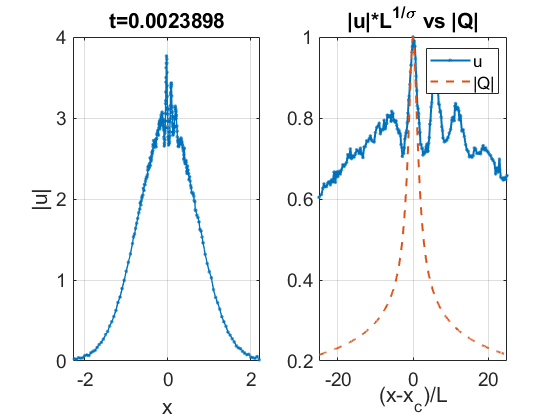}
\includegraphics[width=0.32\textwidth]{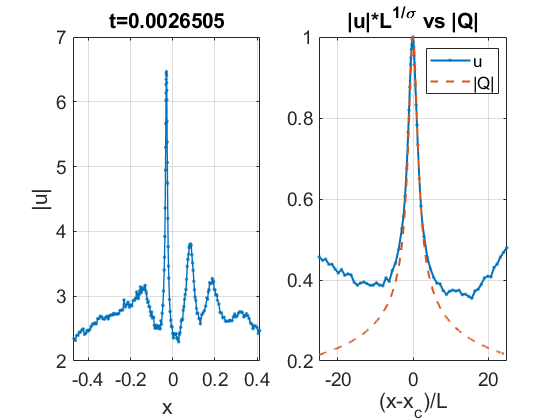}
\includegraphics[width=0.32\textwidth]{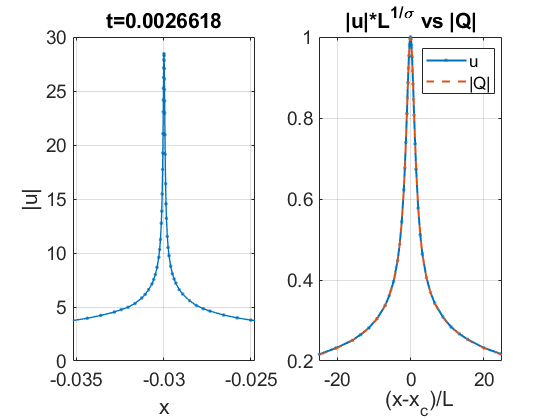}
\includegraphics[width=0.32\textwidth]{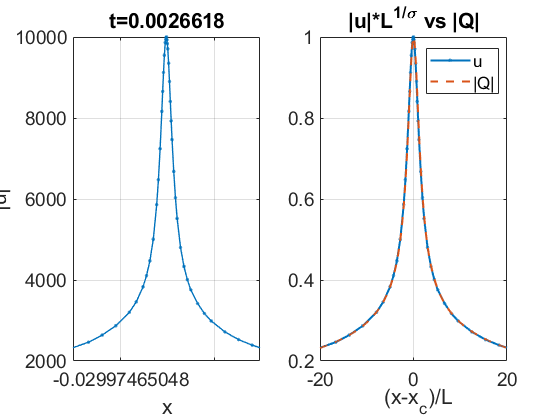}
\caption {Additive noise, $\epsilon=0.1$. Convergence of the blow-up in the $L^2$-supercritical case ($\sigma=3$); actual solution and its rescaled version (blue), the rescaled profile solution $Q_{1,0}$ (red).}
\label{SNLS_blow_up_add7p}
\end{figure}

\section{Conclusion}\label{conclusion}

In this work we investigate the behavior of solutions to the 1d focusing SNLS subject to a stochastic perturbation which is either multiplicative or additive,
and driven by  space-time white noise. In particular, we study the time dependence of the mass ($L^2$-norm) and the energy (Hamiltonian) in the $L^2$-critical 
and supercritical cases. For that we consider a discretized version of both quantities and an approximation of the actual mass or energy. 
In the deterministic case these quantities are conserved in time, however, it is not necessarily the case in the stochastic setting.
 In  the case of a multiplicative noise, which is defined in terms of the Stratonovich integral, 
the mass (both discrete and actual) is invariant. However, in the additive case the mass grows linearly. 
The energy grows in time in both stochastic settings. We give upper estimates on that time dependence 
and then track it numerically; we observe that energy levels off when the noise is multiplicative. %} % growth in time. %or/and asymptotic stabilization, 
We also investigate the dependence of the mass and energy on the strength of the noise, on the spatial and temporal mesh refinements and the length of the 
computational interval.  

For the above we use three different numerical schemes; all of them conserve discrete mass in the multiplicative noise setting, and one of them conserves the 
discrete energy in the deterministic setting, though that scheme involves fixed point iterations to handle the nonlinear system, thus, taking longer computational time. 
We introduce a new scheme, a linear extrapolation of the above and Crank-Nickolson discretization of the potential term, which speeds up significantly our 
computations, since the scheme is linear, and thus, avoiding extra fixed point iterations while having tolerable errors. 

We also introduce a new algorithm in order to investigate the blow-up dynamics. Typically in the deterministic setting to track the blow-up dynamics, 
the dynamic rescaling method is used. We use instead  a finite difference method with non-uniform mesh and then mesh-refinement with mass-conservative 
interpolation. 
With this algorithm we are able to track the blow-up rate, profile and we find a new feature in the blow-up dynamics, the shift of the blow-up center, which follows 
normal distribution for large number of trials. We note that our algorithm is also applicable for the deterministic NLS equation, 
in particular, it can replace the dynamic rescaling or moving mesh methods used to track blow-up.  

%Finally, we mention that 
We confirm previous results of Debussche et al. \cite{DM2002a}, \cite{dBD2002c}, \cite{dBD2005} showing that the additive noise can amplify or create blow-up (we suspect that this happens almost surely for any data) in the $L^2$-critical and supercritical cases. In the multiplicative noise setting the blow-up seems to occur for any (sufficiently localized) data in the $L^2$-supercritical case, and above the mass threshold in the $L^2$-critical case. 
Finally, when the noise is present, a solution is likely to travel away from the initial `center',  and, %blow-up; 
 once the solution starts blowing up, the noise plays no role in the singularity structure, and the blow-up occurs with the rate and profile
  similar to the deterministic setting.

\bibliography{SNLS_bib}
\bibliographystyle{abbrv}

\end{document}